\newtheorem{theorem}{Theorem}[section]
\theoremstyle{plain}
\newtheorem{corollary}[theorem]{Corollary}
\newtheorem{lemma}[theorem]{Lemma}
\newtheorem{proposition}[theorem]{Proposition}
\theoremstyle{definition}
\newtheorem{definition}[theorem]{Definition}
\newtheorem{example}[theorem]{Example}
\newtheorem{remark}[theorem]{Remark}
\numberwithin{equation}{section}
\def\quotient#1#2{%
    \raise1ex\hbox{$#1$}\Big/\lower1ex\hbox{$#2$}%
}
\DeclareMathOperator{\sgn}{sgn} 
\DeclareMathOperator{\Ker}{\mathrm{Ker}} 
\DeclareMathOperator{\Coker}{\mathrm{Coker}} 
\DeclareMathOperator{\Image}{\mathrm{Im}} 
\DeclareMathOperator{\Disc}{Disc} 
\newcommand{\uv}[1]{``{#1}"}
\newcommand{\Z}{\mathbb{Z}}
\newcommand{\Q}{\mathbb{Q}}
\newcommand{\abs}[1]{\left|{#1}\right|} 
\newcommand{\ii}{\mathrm{i}}
\newcommand{\f}[1]{\ensuremath{#1}} 
\newcommand{\OK}[1]{\ensuremath{\mathcal{O}_{#1}}} 
\newcommand{\ve}{\varepsilon}
\newcommand{\cjg}[1]{\overline{#1}} 
\newcommand{\chv}[1]{ \sgn(\sigma_1({#1})), \dots,  \sgn(\sigma_r({#1}))} 
\newcommand{\barsgn}[1]{\underline{\sgn}\left(#1\right)} 
\newcommand{\norm}[2]{\mathcal{N}_{#1}\left(#2\right)} 
\newcommand{\OISet}[1]{\mathcal{I}^{o}_{#1}} 
\newcommand{\OCl}[1]{\mathcal{C\!\ell}^{o}_{#1}} 
\newcommand{\OrelISet}[1]{\mathcal{I}^{o}_{#1}} 
\newcommand{\OrelPSet}[1]{\mathcal{P}^{o}_{#1}} 
\newcommand{\OrelCl}[1]{\mathcal{C\!\ell}^{o}_{#1}} 
\newcommand{\ISet}[1]{\mathcal{I}_{#1}} 
\newcommand{\PSet}[1]{\mathcal{P}_{#1}} 
\newcommand{\PPlusSet}[1]{\mathcal{P}^{+}_{#1}} 
\newcommand{\Cl}[1]{\mathcal{C\!\ell}_{#1}} 
\newcommand{\ClPlus}[1]{\mathcal{C\!\ell}^{+}_{#1}} 
\newcommand{\ClImag}[1]{\mathcal{C\!\ell}^{i}_{#1}} 
\newcommand{\Gr}[1]{\mathcal{G}_{#1}}
\newcommand{\USet}[1]{\mathcal{U}_{#1}} 
\newcommand{\UPlusSet}[1]{\mathcal{U}^{+}_{#1}} 
\newcommand{\OI}[2]{\left(#1; #2_1, \dots, #2_r \right)} 
\newcommand{\OrelI}[2]{\left(#1; #2_1, \dots, #2_r \right)} 
\newcommand{\OrelIbasis}[2]{\left( \left[#1\right]; \barsgn{\det #2} \right)} 
\newcommand{\OrelP}[1]{\left( \left(#1\right); \barsgn{ #1\cjg{#1} } \right) } 
\newcommand{\OrelPnorm}[2]{\left( \left(#1\right); \barsgn{ \norm{#2}{#1} } \right) } 
\newcommand{\pmr}[1]{\left\langle \pm 1 \right\rangle ^{#1}} 
\newcommand{\DSet}{\mathcal{D}} 
\newcommand{\QFSet}{\mathcal{Q}_\DSet}
\begin{document}
\title[Composition of Quadratic Forms]{Composition of Binary Quadratic Forms \\over Number Fields}
\author{Krist\'{y}na Zemkov\'{a}}

\address{Charles University, Faculty of Mathematics and Physics, Department of Algebra,
Sokolovsk\'{a} 83, 18600 Praha 8, Czech Republic}
\address{Fakult\"at f\"ur Mathematik, Technische Universit\"at Dortmund, D-44221 Dortmund,
Germany}
\email{zemk.kr@gmail.com}%

\thanks{The author was supported by the project SVV-2017-260456 and by the Charles University, project GA UK No.\ 1298218.}

\subjclass[2010]{11E16; 11E04, 11R04}
\keywords{binary quadratic forms, Gauss composition, class group}%
\date{\today}

\begin{abstract}
In this article, the standard correspondence between the ideal class group of a quadratic number field and the equivalence classes of binary quadratic forms of given discriminant is generalized to any base number field of narrow class number one. The article contains an explicit description of the correspondence. In the case of totally negative discriminants, equivalent conditions are given for a binary quadratic form to be totally positive definite.
\end{abstract}
\maketitle

\section{Introduction}

There are two classical definitions of composition of binary quadratic forms over $\Z$: the first one, using bilinear substitutions, was described by Gauss in his Disquisitiones Arithmeticae in 1801; the second one, based on so-called \uv{united forms}, is attributed to Gauss's student, Dirichlet. Yet a completely different approach was taken by Dedekind. In modern terms, his idea was to associate a binary quadratic form with an appropriate module; the composition of quadratic forms is then translated as module multiplication. 

Later on, all of these approaches were used to generalize composition of binary quadratic forms to more general base rings. In \cite{Butts-Estes}, Butts and Estes determined domains, in which the \uv{united-forms} composition holds. Kaplansky in \cite{Kaplansky} used the module multiplication approach to give a full description of composition of binary quadratic forms over any B\'{e}zout domain. The relationship between Gauss composition and \uv{united-form} composition over some classes of rings was studied by Butts and Dulin in \cite{Butts-Dulin}. Towber in \cite{Towber} used \uv{oriented} binary quadratic forms to describe a composition over an arbitrary commutative ring with a unit, such that 2 is not a zero divisor. This last remaining restriction on the base ring was removed by Kneser in \cite{Kneser}, where he considered a binary quadratic module $M$ as a module over its even Clifford algebra $C^+(M)$; from this point of view, composition corresponds to the tensor product over $C^+(M)$. 

It was not until the beginning of the 21st century that Bhargava, in his famous article \cite{BhargavaLawsI}, redefined Gauss composition, and discovered another 13 composition laws on other polynomials. The beauty of these composition laws is in using certain cubes of integers, and the result is based on a correspondence between integral cubes and triples of ideals. A few years later in 2011, Bhargava's student Wood generalized Bhargava's correspondence in \cite{Wood} and gave a complete statement of the relationship between binary quadratic forms and modules for quadratic algebras over any base ring. Later, in 2016, O'Dorney described the same kind of correspondence  over any Dedekind domain in \cite{ODorney2016}. 

It is worth noting that in all the aforementioned articles, if an equivalence of quadratic forms (in the classical sense) is defined, then only the equivalence given by matrices of determinant 1 is considered. This can be seen as somewhat unnatural, as the base ring may contain other units as well. In the case of the ring of integers of a number field, the most natural choice seems to be to consider any equivalence given by a matrix, determinant of which is a totally positive unit. Indeed, this approach was taken by Mastropietro in his thesis \cite{mastropietro}; he described a construction of the correspondence between the ideal class group and the equivalence classes of binary quadratic forms, but only for the case when the base field is a real quadratic number field of class number one, and the discriminant is totally negative.

The aim of this article is to take into account an equivalence of binary quadratic forms given by all totally positive units of the base number field and to develop, in such settings, a Dedekind-like correspondence between classes of forms and an ideal class group. As the base field, we consider an arbitrary number field of narrow class number one; this is equivalent to having class number one together with the existence of units of all signs. These conditions are necessary for our approach: First, the class number has to be one, for a free module basis of any fractional ideal to exist, and second, units of all signs are needed in order for such a basis to be able to have any orientation. This paper is based on the first part of the author's Master thesis \cite{KZthesis}. The results of this paper are further used in \cite{KZcubes} to generalize the composition of Bhargava's cubes to rings of integers of number fields of narrow class number one.

First, we focus on base fields with at least one real embedding. Refined definitions of the equivalence of quadratic forms and of the ideal class group are given in Subsections~\ref{subsec:QFs} and~\ref{subsec:OCl}; then Section~\ref{sec:Correspondence} deals with the correspondence itself. The main results of the article are  Theorem~\ref{Theorem:Bijection} together with Corollary~\ref{Cor:GroupStructure}. Afterwards, the results are applied in Section~\ref{sec:PosDef} to a study of positive definiteness of quadratic forms. At the end of the paper, Section~\ref{sec:TotImag} deals with the case when the base field is totally imaginary; the result is summarized in Theorem~\ref{Theorem:Bijection-Imag}.


\section{Preliminaries}\label{Section:Prel}
Throughout the whole paper, we fix a number field $K$ of narrow class number one\footnote{We use the standard notation $h^+(K)$ for the narrow class number of $K$ and $h(K)$ for the class number of $K$.}; this is equivalent to $K$ being of class number one and having units of all signs (for a reference, see \cite[Ch.~V,~(1.12)]{FrohlichTaylor}). We write $\OK{K}$ for the ring of algebraic integers of $K$; the group of units of this ring is denoted by $\USet{K}$, and $\UPlusSet{K}$ stands for its subgroup of totally positive units. Assume that $K$ has exactly $r$ embeddings into real numbers, and let $\sigma_1, \dots, \sigma_r$ be these embeddings. Moreover, suppose that $r\geq1$, i.e., that $K$ is \emph{not totally imaginary}.

Furthermore, we fix a relative quadratic extension $L$ of the number field $K$. Note that the Galois group $\mathrm{Gal}(L/K)$ has two elements; if $\tau$ is the nontrivial element of this group, then, for $\alpha\in L$, we write $\cjg{\alpha}$ instead of $\tau(\alpha)$.


\subsection{Ideals}\label{Subsec:PrelIdeals}

Since $h^+(K)=1$, the ring $\OK{L}$ is a free $\OK{K}$-module; hence $\OK{L}=[1, \Omega]_{\OK{K}}$ for a~suitable $\Omega\in\OK{L}$, and every (fractional) $\OK{L}$-ideal has a module basis of the form $[\alpha, \beta]_{\OK{K}}$ for some $\alpha, \beta \in L$ (see \cite[Cor.~p.~388]{narkiewicz2004elementary} and \cite[Prop.~2.24]{MilneANT}). In both cases, the index $\OK{K}$ will be usually omitted. As an algebraic integer over $\OK{K}$, $\Omega$ is a root of a monic quadratic polynomial $x^2+wx+z$ for some $w,z\in\OK{K}$; the other root is $\cjg{\Omega}$. Put $D_\Omega=w^2-4z$, and note that, without loss of generality,
\begin{equation}\label{Omega}
\Omega=\frac{-w+\sqrt{D_{\Omega}}}{2}, \qquad \cjg{\Omega}=\frac{-w-\sqrt{D_{\Omega}}}{2}.
\end{equation} 
Hence, $\f{L}=\f{K}\left(\sqrt{D_{\Omega}}\right)$, and  
\begin{equation}\label{DOmega}
D_{\Omega}=\left(\Omega-\cjg{\Omega}\right)^2.
\end{equation}
Moreover, the above choice of $\Omega$ and $\cjg{\Omega}$ determines the square root of $D_\Omega$ uniquely as 
\begin{equation}\label{sqrtDOmega}
\sqrt{D_\Omega}=\Omega-\cjg{\Omega}.
\end{equation} 
Also note that $\OK{L}\subseteq\left\{\frac{a}{2}+\frac{b}{2}\sqrt{D_{\Omega}} ~\big|~ a,b \in \OK{K}\right\}$.

One may expect the element $D_\Omega$ to be square-free (i.e., not divisible by $q^2$ for any $q\in\OK{K}\backslash\:\USet{K}$), but since $D_\Omega$ is the discriminant of a binary quadratic form, it may not always be the case. Hence, instead of that, we introduce the following definition of fundamental element: an element, which is \uv{almost square-free} and a quadratic residue modulo $4$ at the same time. 

\begin{definition}\label{Def:AlmostSquare-free}
An element $d$ of $\OK{K}$ is called \emph{fundamental} if $d$ is a quadratic residue modulo $4$ in $\OK{K}$, and for every $p\in\OK{K}\backslash\:\USet{K}$ such that $p^2\mid d$ the following holds: $p\mid 2$ and $\frac{d}{p^2}$ is not a quadratic residue modulo $4$ in $\OK{K}$.
\end{definition}

In the case $K=\Q$, this definition agrees with the one of the \emph{fundamental discriminant}. The following lemma shows that, from this point of view, $D_{\Omega}$ is \uv{a~fundamental discriminant over $K$}.

\begin{lemma} \label{Lemma:DOmega}
$D_{\Omega}$ is fundamental.
\end{lemma}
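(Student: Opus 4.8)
The plan is to split the statement about $D_\Omega = w^2 - 4z$ into the two independent requirements of Definition \ref{Def:AlmostSquare-free}: that $D_\Omega$ is a quadratic residue modulo $4$, and that it satisfies the \uv{almost square-free} condition of the second bullet. The first requirement is immediate, since $D_\Omega = w^2 - 4z \equiv w^2 \pmod{4}$ exhibits $D_\Omega$ as congruent to a square modulo $4$. The entire weight of the lemma therefore sits in the square-free condition, and I would extract it from the \emph{maximality} of $\OK{L}$ encoded in the equality $\OK{L} = [1,\Omega]$. Concretely, recall from \eqref{Omega} that $\sqrt{D_\Omega} = 2\Omega + w$, so that an element $x + y\sqrt{D_\Omega}$ of $L$ lies in $[1,\Omega]$ exactly when $x + yw \in \OK{K}$ and $2y \in \OK{K}$; I will repeatedly test candidate integers against this criterion by reading off their coordinate of $\Omega$ in the basis $1,\Omega$.

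Suppose then that $p \in \OK{K}\setminus\USet{K}$ satisfies $p^2 \mid D_\Omega$; I must show both that $p \mid 2$ and that $D_\Omega/p^2$ is not a quadratic residue modulo $4$. For the first, I would consider $\xi = \sqrt{D_\Omega}/p$. Since $\xi^2 = D_\Omega/p^2 \in \OK{K}$, the element $\xi$ is a root of the monic polynomial $x^2 - D_\Omega/p^2$ over $\OK{K}$, hence $\xi \in \OK{L} = [1,\Omega]$. Writing $\xi = (w/p) + (2/p)\Omega$ and using that $1,\Omega$ is an $\OK{K}$-basis forces $2/p \in \OK{K}$, that is, $p \mid 2$.

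For the second part I would argue by contradiction: assume $D_\Omega/p^2 \equiv c^2 \pmod{4}$ for some $c \in \OK{K}$, and set $\gamma = \tfrac12\left(c + \sqrt{D_\Omega}/p\right)$. Its trace $\gamma + \cjg{\gamma} = c$ and its norm $\gamma\cjg{\gamma} = (c^2 - D_\Omega/p^2)/4$ both lie in $\OK{K}$ --- the norm because $c^2 - D_\Omega/p^2 \equiv 0 \pmod{4}$ by assumption --- so $\gamma$ is integral and $\gamma \in \OK{L} = [1,\Omega]$. But the $\Omega$-coordinate of $\gamma$ is $1/p$, which would force $p \in \USet{K}$, contradicting the choice of $p$. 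Hence no such $c$ exists. Together with the first part this verifies the second bullet of Definition \ref{Def:AlmostSquare-free} for every admissible $p$ (and if no such $p$ exists at all, then $D_\Omega$ is square-free and the first bullet applies instead), so $D_\Omega$ is fundamental.

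I expect the only genuine obstacle to be conceptual rather than computational: recognizing that the maximality of $\OK{L}$ --- the fact that its $\OK{K}$-basis is literally $\{1,\Omega\}$ --- is the lever that rules out \uv{too much} square divisibility, and guessing the two test elements $\sqrt{D_\Omega}/p$ and $\tfrac12(c + \sqrt{D_\Omega}/p)$, whose integrality is cheap to check but whose membership in $[1,\Omega]$ pins down the arithmetic of $p$ relative to $2$. Everything after that is the routine coordinate bookkeeping enabled by $\sqrt{D_\Omega} = 2\Omega + w$.
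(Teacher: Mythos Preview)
Your proof is correct and essentially identical to the paper's: both verify the quadratic-residue condition from $D_\Omega = w^2 - 4z$, then use the integrality of $\sqrt{D_\Omega}/p$ in $\OK{L}=[1,\Omega]$ to force $p\mid 2$, and finally derive a contradiction from the integrality of $\tfrac12\bigl(c+\sqrt{D_\Omega}/p\bigr)$ (the paper's $\kappa$ with $t=-c$) by reading off its $\Omega$-coordinate $1/p$. Your explicit coordinate criterion via $\sqrt{D_\Omega}=2\Omega+w$ is a slightly cleaner packaging of the same ``compare coefficients'' step the paper performs.
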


\begin{proof}
Obviously, $D_\Omega=w^2-4z$ is a quadratic residue modulo $4$.

Assume that there exists $p\in\OK{K}$ which is not a unit, and such that $p^2\mid D_{\Omega}$; set $D'=\frac{D_{\Omega}}{p^2}$. Since $\sqrt{D'}$ is a root of the polynomial $x^2-D'$, it is $\sqrt{D'}\in\OK{L}$; hence, there exist $a, b \in\OK{K}$ such that $\sqrt{D'}=a+b\Omega$ where $\Omega=\frac{-w+p\sqrt{D'}}{2}$. Comparing the coefficients at $\sqrt{D'}$, we get that $p$ must be a divisor of $2$ in $\OK{K}$. 

For contradiction, suppose that there exists $t\in\OK{K}$ such that $D' \equiv t^2 \pmod{4}$. We can find $m\in\OK{K}$ such that $D'=t^2-4m$; then the quadratic polynomial $x^2+tx+m$ has the discriminant equal to $D'$ and a root $\kappa=\frac{-t+\sqrt{D'}}{2}$, which is an element of $\OK{L}$. Hence, there exist $a', b' \in\OK{K}$ such that $\kappa=a'+b'\Omega$, i.e.,
$$\frac{-t+\sqrt{D'}}{2} = a'+b'\frac{-w+p\sqrt{D'}}{2}.$$
Comparing the coefficients at $\sqrt{D'}$, we obtain that $b'p=1$. But that is not possible, because $b'\in\OK{K}$, and $p$ is not a unit. Hence, we have found the desired contradiction.
\end{proof}

On the other hand, if we take $D\in\OK{K}$ such that $K\big(\sqrt{D}\big)=L$, then clearly $p^2D=q^2D_{\Omega}$ for some $p,q\in\OK{K}$. Furthermore, if $D$ is fundamental, then $\frac{p}{q}$ has to be a unit, because both $\frac{p^2D}{q^2}$ and $\frac{q^2D_{\Omega}}{p^2}$ are quadratic residues modulo 4. We have proved the following lemma.

\begin{lemma}\label{Lemma:PosDiscr}
Let $D$ be a fundamental element of $\OK{K}$ s.t. $K\big(\sqrt{D}\big)=L$. Then there exists $u\in\USet{K}$ such that $D=u^2D_{\Omega}$.
\end{lemma}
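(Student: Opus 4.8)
The plan is to convert the equality $K\big(\sqrt{D}\big)=L=K\big(\sqrt{D_{\Omega}}\big)$ into a relation $D=c^2D_{\Omega}$ and then force the scalar $c$ to be a unit. First I would note that $\sqrt{D}\in L=K\big(\sqrt{D_{\Omega}}\big)$ while $\sqrt{D}\notin K$, so writing $\sqrt{D}=x+y\sqrt{D_{\Omega}}$ and comparing coefficients gives $x=0$ and hence $D=c^2D_{\Omega}$ with $c=y\in K^{\times}$. As $\OK{K}$ is a principal ideal domain, I would write $c=\frac{p}{q}$ in lowest terms, so that $q^2D=p^2D_{\Omega}$; coprimality of $p$ and $q$ then yields $p^2\mid D$ and $q^2\mid D_{\Omega}$. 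Putting $E:=\frac{D}{p^2}=\frac{D_{\Omega}}{q^2}\in\OK{K}$, the statement reduces to proving that $p$ and $q$ are units.

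Next I would assume for contradiction that $c$ is not a unit, so that at least one of $p,q$ is a non-unit. Suppose $p$ is a non-unit; then $p^2\mid D$ shows that $D$ is not square-free, so the second bullet of Definition \ref{Def:AlmostSquare-free} applies to the fundamental element $D$, and, read off at the divisor $p$, it gives that $E=\frac{D}{p^2}$ is not a quadratic residue modulo $4$. If instead $q$ is a non-unit, the same reasoning applied to $D_{\Omega}$ (fundamental by Lemma \ref{Lemma:DOmega}) shows that $E=\frac{D_{\Omega}}{q^2}$ is not a quadratic residue modulo $4$. In either case $E$ is not a quadratic residue modulo $4$.

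The final step is where I expect the real difficulty: ruling out the situation just obtained. Over $\Z$ it is immediate, since $2$ is prime and $\gcd(p,q)=1$ leaves no room; but in a general $\OK{K}$ the prime $2$ may split or ramify, so the naive observation $pq\mid 2$ does not suffice, and I would instead argue locally at the primes above $2$. By the Chinese Remainder Theorem $\OK{K}/(4)\cong\prod_{\mathfrak{p}\mid 2}\OK{K}/\mathfrak{p}^{k_{\mathfrak{p}}}$, where $\mathfrak{p}^{k_{\mathfrak{p}}}$ denotes the exact power of $\mathfrak{p}$ dividing $(4)$, and an element is a quadratic residue modulo $4$ exactly when it is a square in every factor. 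As $E$ is not a quadratic residue modulo $4$, some prime $\mathfrak{p}_0\mid 2$ makes $E$ a non-square modulo $\mathfrak{p}_0^{k_{\mathfrak{p}_0}}$. However $D=p^2E$ and $D_{\Omega}=q^2E$ are quadratic residues modulo $4$, hence squares modulo $\mathfrak{p}_0^{k_{\mathfrak{p}_0}}$; were $\mathfrak{p}_0\nmid p$, then $p$ would be invertible modulo $\mathfrak{p}_0^{k_{\mathfrak{p}_0}}$ and dividing the square $p^2E$ by the unit square $p^2$ would exhibit $E$ as a square there, a contradiction. Hence $\mathfrak{p}_0\mid p$, and symmetrically $\mathfrak{p}_0\mid q$, contradicting $\gcd(p,q)=1$. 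Therefore $c$ is a unit, and with $u=c$ we obtain $D=u^2D_{\Omega}$ with $u\in\USet{K}$.
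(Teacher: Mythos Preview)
Your proof is correct and follows the same route as the paper: write $q^2D=p^2D_\Omega$ with $\gcd(p,q)=1$ and use that both $D$ and $D_\Omega$ are quadratic residues modulo~$4$ (being fundamental) to force $p/q\in\USet{K}$. The paper asserts this last implication in a single clause without argument; your local analysis at the primes above~$2$ via the Chinese Remainder Theorem is exactly the missing justification, and it is genuinely needed when $2$ is not inert in $\OK{K}$.
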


In the following, the word ``ideal'' will generally stand for a fractional ideal while to the usual meaning will be referred as to the ``integral ideal''. Consider an ideal $I=[\alpha,\beta]$ in $\OK{L}$. Then there exists a $2\times2$ matrix $M$ consisting of elements of \f{K} such that
$$
\begin{pmatrix}
	 \alpha \\
		 \beta \\ 
	\end{pmatrix}
	= M\cdot
	\begin{pmatrix}
		 1 \\
	 \Omega \\
	\end{pmatrix}.
$$
Then also
\begin{equation}
\begin{pmatrix}
		\cjg{\alpha} & \alpha \\
		\cjg{\beta} & \beta \\ 
	\end{pmatrix}
	= M\cdot
	\begin{pmatrix}
		1 & 1 \\
		\cjg{\Omega} & \Omega \\
	\end{pmatrix},
\end{equation}

and thus
\begin{equation}
\det M = \frac{\cjg{\alpha}\beta-\alpha\cjg{\beta}}{\Omega-\cjg{\Omega}}.
\end{equation}

The proofs of the following two lemmas are just direct computations.
\begin{lemma}\label{Lemma:Change_of_orientation}
Let $I=[\alpha, \beta]$ be an ideal, $M$ the same matrix as above, $[p\alpha+r\beta, q\alpha+s\beta]$ another $\OK{K}$-module basis of $I$, and $\widetilde{M}$ the matrix corresponding to this basis. Then $\det \widetilde{M}=(ps-qr)\det M$.
\end{lemma}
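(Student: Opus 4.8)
The plan is to recognize that passing to the new basis amounts to left-multiplying the old basis vector by the integer substitution matrix, and then to invoke the uniqueness of the matrix that expresses a pair of elements of $L$ in terms of $1$ and $\Omega$. Writing $T=\begin{pmatrix} p & r \\ q & s\end{pmatrix}$, the new basis is by definition
\begin{equation*}
\begin{pmatrix} p\alpha+r\beta \\ q\alpha+s\beta \end{pmatrix} = T \begin{pmatrix} \alpha \\ \beta \end{pmatrix},
\end{equation*}
and substituting the defining relation $\begin{pmatrix}\alpha\\\beta\end{pmatrix}=M\begin{pmatrix}1\\\Omega\end{pmatrix}$ gives
\begin{equation*}
\begin{pmatrix} p\alpha+r\beta \\ q\alpha+s\beta \end{pmatrix} = TM\begin{pmatrix} 1 \\ \Omega \end{pmatrix}.
\end{equation*}

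First I would argue that the matrix expressing two elements of $L$ in the basis $\{1,\Omega\}$ is unique. Since $L=K\big(\sqrt{D_\Omega}\big)$ and $\Omega\notin K$ (by \eqref{Omega}, $\Omega=\frac{-w+\sqrt{D_\Omega}}{2}$ with $\sqrt{D_\Omega}\notin K$), the elements $1$ and $\Omega$ are linearly independent over $K$ and hence form a $K$-basis of $L$. Consequently, if $\widetilde{M}$ is the matrix attached to the new basis by the same recipe, so that $\begin{pmatrix} p\alpha+r\beta \\ q\alpha+s\beta \end{pmatrix}=\widetilde{M}\begin{pmatrix}1\\\Omega\end{pmatrix}$, comparing with the display above forces $\widetilde{M}=TM$ entry by entry.

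Taking determinants then yields $\det\widetilde{M}=\det T\cdot\det M=(ps-qr)\det M$, which is exactly the claim. There is no serious obstacle here; the only point requiring care is the uniqueness step, namely that the coordinates of an element of $L$ relative to $\{1,\Omega\}$ are well defined, which is precisely the $K$-linear independence of $1$ and $\Omega$ noted above. Everything else is purely formal matrix algebra.
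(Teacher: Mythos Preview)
Your proof is correct and is exactly the direct computation the paper alludes to (the paper omits the proof, stating only that it is a direct computation). The identification $\widetilde{M}=TM$ via the $K$-linear independence of $1$ and $\Omega$, followed by taking determinants, is the natural and essentially unique way to carry this out.
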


\begin{lemma} \label{Lemma:integrality_of_detM}
$\det M \in \f{K}$, and if $\alpha, \beta \in \OK{L}$, then $\det M \in \OK{K}$.
\end{lemma}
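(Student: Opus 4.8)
The plan is to read off both assertions from the observation that $\{1,\Omega\}$ plays a dual role: it is a $K$-basis of the vector space $L$, and at the same time a free $\OK{K}$-module basis of $\OK{L}$. Write $M=\begin{pmatrix} a & b \\ c & d\end{pmatrix}$, so that the defining relation $\binom{\alpha}{\beta}=M\binom{1}{\Omega}$ reads $\alpha=a+b\Omega$ and $\beta=c+d\Omega$. Since $L=K\big(\sqrt{D_{\Omega}}\big)=K(\Omega)$ is a degree-two extension and $\Omega\notin K$, the set $\{1,\Omega\}$ is a $K$-basis of $L$; hence the coordinates $a,b,c,d$ are uniquely determined elements of $K$. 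The first claim $\det M=ad-bc\in K$ is then immediate, as $K$ is a field.

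As a consistency check against the displayed formula $\det M=\frac{\cjg{\alpha}\beta-\alpha\cjg{\beta}}{\Omega-\cjg{\Omega}}$, I would note that the nontrivial automorphism $\tau$ of $\mathrm{Gal}(L/K)$ negates both numerator and denominator: one has $\tau\big(\cjg{\alpha}\beta-\alpha\cjg{\beta}\big)=\alpha\cjg{\beta}-\cjg{\alpha}\beta=-\big(\cjg{\alpha}\beta-\alpha\cjg{\beta}\big)$ and $\tau(\Omega-\cjg{\Omega})=\cjg{\Omega}-\Omega=-(\Omega-\cjg{\Omega})$, so the quotient is fixed by $\tau$ and therefore lies in the fixed field $K=L^{\langle\tau\rangle}$. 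This recovers the first assertion independently of the coordinate argument.

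For the integrality statement I would invoke the stronger basis property recorded at the beginning of this subsection, namely $\OK{L}=[1,\Omega]_{\OK{K}}$, which says that $\{1,\Omega\}$ is a free $\OK{K}$-module basis of $\OK{L}$. If $\alpha,\beta\in\OK{L}$, then each has coordinates in $\OK{K}$ with respect to this basis; by uniqueness these coordinates coincide with $a,b$ and $c,d$ respectively, so $a,b,c,d\in\OK{K}$, and hence $\det M=ad-bc\in\OK{K}$ because $\OK{K}$ is a ring. There is essentially no obstacle here: the only point requiring care is the identification of the entries of $M$ with the coordinates of $\alpha,\beta$ in the basis $\{1,\Omega\}$, which is forced by the defining relation together with the uniqueness of such coordinates, and it is precisely this uniqueness that lets integrality of $\alpha,\beta$ propagate to integrality of $\det M$.
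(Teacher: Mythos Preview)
Your proof is correct and is precisely the ``direct computation'' the paper alludes to but omits: the entries of $M$ are the coordinates of $\alpha,\beta$ in the basis $\{1,\Omega\}$, which lie in $K$ (respectively in $\OK{K}$ when $\alpha,\beta\in\OK{L}$), whence $\det M$ does too. Your additional Galois-fixed-field check is a nice redundancy but not needed.
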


The \emph{relative norm} of an element $\alpha$ is defined as $\norm{L/K}{\alpha}=\alpha\overline{\alpha}$. If $I$ is a fractional $\OK{L}$-ideal, then the $\OK{K}$-ideal $\norm{L/K}{I}=\left(\norm{L/K}{\alpha}: \alpha\in I \right)$ is the \emph{relative norm of the ideal $I$}. Note that if $I$, $J$ are two $\OK{L}$-ideals such that $I\subseteq J$, then $\norm{L/K}{I}\subseteq\norm{L/K}{J}$. For the relative norm of a principal ideal clearly holds that $\norm{L/K}{(\alpha)}=\left(\norm{L/K}{\alpha} \right)$. In the general case, the ideal $\norm{L/K}{I}$ has to be principal as well, because $h(K)=1$ by the assumption; its generator can be written explicitly in terms of the $\OK{K}$-module basis of $I$, as the following lemma shows.

\begin{lemma} \label{Lemma:norm_ideal}
Let $I=[\alpha, \beta]$ be an ideal, and $M$ the same matrix as above. Then $\det M$ generates the $\OK{K}$-ideal $\norm{L/K}{I}$.
\end{lemma}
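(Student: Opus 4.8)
The plan is to establish the equality of the two principal $\OK{K}$-ideals $(\det M)$ and $\norm{L/K}{I}$ by reducing everything to the principal case, which is already settled before the lemma by the identity $\norm{L/K}{(\gamma)}=(\norm{L/K}{\gamma})$. The computational heart is the observation that the lemma is immediate when $I$ is itself a principal $\OK{L}$-ideal: if $I=(\gamma)$ with $\gamma=g+h\Omega$ for $g,h\in\f{K}$, then $I=[\gamma,\gamma\Omega]$, and since $\Omega^2=-w\Omega-z$ the transition matrix is $\begin{pmatrix} g & h \\ -hz & g-hw \end{pmatrix}$, whose determinant $g^2-ghw+h^2z$ equals $\gamma\cjg{\gamma}=\norm{L/K}{\gamma}$ (using $\Omega+\cjg{\Omega}=-w$ and $\Omega\cjg{\Omega}=z$). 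Thus $(\det M)=(\norm{L/K}{\gamma})=\norm{L/K}{I}$ whenever $I$ is principal.

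The genuine obstacle is that $I$ need not be principal as an $\OK{L}$-ideal: even though $\OK{K}$ has class number one, the class group of $\OK{L}$ may be nontrivial, so one cannot write $I=(\gamma)$ globally. To circumvent this I would verify the desired equality locally. Since two fractional $\OK{K}$-ideals coincide as soon as their localizations at every nonzero prime $\mathfrak{p}$ of $\OK{K}$ agree, it suffices to fix $\mathfrak{p}$ and pass to $S^{-1}\OK{L}$ with $S=\OK{K}\setminus\mathfrak{p}$. The ring $S^{-1}\OK{L}$ is a localization of the Dedekind domain $\OK{L}$ with only finitely many maximal ideals, namely those lying over $\mathfrak{p}$; being a semilocal Dedekind domain, it is a PID. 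Hence $S^{-1}I=(\gamma)$ is principal over $S^{-1}\OK{L}$, and because $\OK{L}=[1,\Omega]$ the localization still satisfies $S^{-1}\OK{L}=[1,\Omega]$, so the principal computation above applies verbatim over the local base.

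It then remains to check that both sides of the claimed identity are compatible with this localization, which I expect to be the only delicate point. On one hand the relative norm commutes with localization: the relation $\norm{L/K}{\gamma/s}=\gamma\cjg{\gamma}/s^2$ for $s\in S$ gives $S^{-1}\norm{L/K}{I}=\norm{L/K}{S^{-1}I}$, which by the principal case equals the ideal generated by $\gamma\cjg{\gamma}$ over $S^{-1}\OK{K}$. On the other hand, the original basis $\alpha,\beta$ of $I$ remains an $S^{-1}\OK{K}$-basis of $S^{-1}I$, so $S^{-1}(\det M)$ is exactly the determinant ideal of $S^{-1}I$; passing from this basis to the principal basis $\gamma,\gamma\Omega$ multiplies the determinant by a unit of $S^{-1}\OK{K}$ (Lemma \ref{Lemma:Change_of_orientation}, applied over the local base), so $S^{-1}(\det M)$ is likewise generated by $\gamma\cjg{\gamma}$. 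Therefore $S^{-1}(\det M)=S^{-1}\norm{L/K}{I}$ for every $\mathfrak{p}$, and the global equality $(\det M)=\norm{L/K}{I}$ follows. The main work is thus not any single computation but the bookkeeping that makes the reduction to the principal case legitimate; a purely global route via multiplicativity of both sides fails precisely because not every prime of $\OK{L}$ is principal.
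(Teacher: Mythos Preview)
Your argument is correct. The principal-case computation is clean, and the localization step is sound: $S^{-1}\OK{L}$ is semilocal Dedekind hence a PID, both the norm ideal and the determinant ideal localize as you say, and the change-of-basis determinant is a unit in $S^{-1}\OK{K}$, so the local equalities glue to the global one.

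However, your route differs from the paper's. The paper does not prove the lemma at all: it simply observes that this is a special case of a known result for any finite Galois extension $E/F$ with $h(F)=1$ and cites \cite[Th.~1]{Mann}. Your localization argument, by contrast, is self-contained and tailored to the quadratic situation; it trades the external reference for a direct reduction to the principal case. The advantage of the paper's approach is economy (one sentence), while yours has the merit of not relying on an outside source and of making transparent exactly where the hypothesis $h(K)=1$ enters (namely, in comparing the two $\OK{K}$-ideals locally and concluding globally). Either is perfectly adequate here.
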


\begin{proof}
This is a well-known result holding for any finite Galois extension $\f{E}/\f{F}$ such that $h(F)=1$. The proof can be found, e.g., in \cite[Th. 1]{Mann}.
\end{proof}

Let us now determine what will be later recognized as the inverse class of an ideal. We need to start with a technical lemma, which will also turn out to be useful later in the proof of Proposition~\ref{Prop:Phi_Im}, as the elements here will be exactly the coefficients of the quadratic form obtained from the ideal $[\alpha, \beta]$.

\begin{lemma}\label{Lemma:IntegralCoeff}
Let $[\alpha, \beta]$ be an ideal, and $M$ the same matrix as above. Then 
$$\frac{\alpha\cjg{\alpha}}{\det M}, \frac{\beta\cjg{\beta}}{\det M}, \frac{\cjg{\alpha}\beta+\alpha\cjg{\beta}}{\det M}$$
are coprime elements of $\OK{K}$.
\end{lemma}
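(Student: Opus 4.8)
The plan is to recognize the three elements as (up to the factor $\det M$) the coefficients of the norm form $\norm{L/K}{x\alpha+y\beta}$ of the ideal, and to treat integrality and coprimality separately. For integrality, the key observation is that for any $\gamma\in[\alpha,\beta]$ the principal ideal $(\gamma)$ is contained in $I=[\alpha,\beta]$, so by the monotonicity of the relative norm and Lemma \ref{Lemma:norm_ideal} one has $\left(\gamma\cjg{\gamma}\right)=\norm{L/K}{(\gamma)}\subseteq\norm{L/K}{I}=(\det M)$; equivalently, $\det M\mid\gamma\cjg{\gamma}$ in $\OK{K}$. Applying this with $\gamma=\alpha$ and $\gamma=\beta$ settles the first two elements directly. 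For the middle one I would use the identity $\cjg{\alpha}\beta+\alpha\cjg{\beta}=(\alpha+\beta)\cjg{(\alpha+\beta)}-\alpha\cjg{\alpha}-\beta\cjg{\beta}$ and apply the same divisibility to $\gamma=\alpha+\beta\in[\alpha,\beta]$; since each summand on the right is divisible by $\det M$, so is the left-hand side. Hence all three quotients lie in $\OK{K}$.

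For coprimality, I would first compute the discriminant of these coefficients. A direct expansion gives $\left(\cjg{\alpha}\beta+\alpha\cjg{\beta}\right)^{2}-4\,\alpha\cjg{\alpha}\,\beta\cjg{\beta}=\left(\cjg{\alpha}\beta-\alpha\cjg{\beta}\right)^{2}$, and the defining formula for $\det M$ yields $\cjg{\alpha}\beta-\alpha\cjg{\beta}=(\det M)\left(\Omega-\cjg{\Omega}\right)$. Dividing by $(\det M)^{2}$ and invoking \eqref{DOmega} therefore gives
\[
\left(\frac{\cjg{\alpha}\beta+\alpha\cjg{\beta}}{\det M}\right)^{2}-4\cdot\frac{\alpha\cjg{\alpha}}{\det M}\cdot\frac{\beta\cjg{\beta}}{\det M}=\left(\Omega-\cjg{\Omega}\right)^{2}=D_{\Omega}.
\]
Thus the three coefficients, call them $A,B,C$, satisfy $B^{2}-4AC=D_{\Omega}$.

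Now I would argue by contradiction. Suppose $A,B,C$ are not coprime; since $\OK{K}$ is a principal ideal domain, their greatest common divisor is then a non-unit $p$ dividing each of them. From $p\mid A$ and $p\mid C$ we get $p^{2}\mid 4AC$, while $p\mid B$ lets us write $B=pB'$; the relation $B^{2}-4AC=D_{\Omega}$ then shows $p^{2}\mid D_{\Omega}$ and
\[
\frac{D_{\Omega}}{p^{2}}=B'^{2}-\frac{4AC}{p^{2}}\equiv B'^{2}\pmod{4},
\]
so $\frac{D_{\Omega}}{p^{2}}$ is a quadratic residue modulo $4$. This contradicts Lemma \ref{Lemma:DOmega}: by Definition \ref{Def:AlmostSquare-free}, a fundamental $D_{\Omega}$ admits no non-unit $p$ with $p^{2}\mid D_{\Omega}$ when it is square-free, and otherwise forces $\frac{D_{\Omega}}{p^{2}}$ to be a non-residue modulo $4$. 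Hence no such $p$ exists, and $A,B,C$ are coprime.

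The routine part is the integrality, once the norm-ideal identity $\norm{L/K}{I}=(\det M)$ is used to convert containment of ideals into divisibility. I expect the main obstacle to be the coprimality step, whose whole point is to transfer the arithmetic of the discriminant $D_{\Omega}$ onto the coefficients: the crux is the identity $B^{2}-4AC=D_{\Omega}$ combined with the precise two-branch formulation of fundamentality in Definition \ref{Def:AlmostSquare-free}. I would take care that the contradiction is drawn uniformly in both branches, and that the congruence $\frac{D_{\Omega}}{p^{2}}\equiv B'^{2}\pmod 4$ relies only on $p\mid A$, $p\mid B$, and $p\mid C$.
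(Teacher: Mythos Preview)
Your proof is correct and follows essentially the same approach as the paper: integrality via the containments $(\gamma)\subseteq[\alpha,\beta]$ together with $\norm{L/K}{[\alpha,\beta]}=(\det M)$, and coprimality via the identity $B^{2}-4AC=D_{\Omega}$ combined with the fundamentality of $D_{\Omega}$. The only point the paper makes explicit that you leave implicit is the passage from an integral to a fractional ideal (the paper clears denominators by choosing $k\in\OK{K}$ with $k\alpha,k\beta\in\OK{L}$ and observes that $k^{2}$ cancels); your phrasing ``$\det M\mid\gamma\cjg{\gamma}$ in $\OK{K}$'' is literally only meaningful once $\gamma\cjg{\gamma},\det M\in\OK{K}$, but the ideal-containment version $(\gamma\cjg{\gamma})\subseteq(\det M)$ you wrote just before it is valid for fractional ideals and already gives $\frac{\gamma\cjg{\gamma}}{\det M}\in\OK{K}$, so no real gap arises.
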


\begin{proof}
We start by proving that $\frac{\alpha\cjg{\alpha}}{\det M}, \frac{\beta\cjg{\beta}}{\det M}, \frac{\cjg{\alpha}\beta+\alpha\cjg{\beta}}{\det M}\in\OK{K}$. First, assume that $\alpha, \beta \in \OK{L}$. Then $\det M \in \OK{K}$ by Lemma~\ref{Lemma:integrality_of_detM}; hence we need to show that the elements $\alpha\cjg{\alpha}, \beta\cjg{\beta}, \cjg{\alpha}\beta+\alpha\cjg{\beta}$ are divisible by $\det M$ in $\OK{K}$. Since $\alpha \in [\alpha, \beta]$, there is $(\alpha)\subset [\alpha, \beta]$. It follows from Lemma~\ref{Lemma:norm_ideal} that $\left(\alpha\cjg{\alpha}\right)=\left(\norm{L/K}{\alpha}\right)\subset \norm{L/K}{[\alpha, \beta]}=(\det M)$, and therefore $\alpha\cjg{\alpha}$ is divisible by $\det M$ in $\OK{K}$. By the same argument, $\beta\cjg{\beta}$ is divisible by $\det M$ in $\OK{K}$. Similarly, $(\alpha+\beta) \subset [\alpha, \beta]$ implies that $\norm{L/K}{\alpha+\beta}$ is divisible by $\det M$. Since $\norm{L/K}{\alpha+\beta}=\alpha\cjg{\alpha}+\beta\cjg{\beta}+\cjg{\alpha}\beta+\alpha\cjg{\beta}$, we see that $\cjg{\alpha}\beta+\alpha\cjg{\beta}=\norm{L/K}{\alpha+\beta}-\alpha\cjg{\alpha}-\beta\cjg{\beta}$ is divisible by $\det M$ in $\OK{K}$ as well.

In the general case, we can find $k\in\OK{K}$ such that $k\alpha, k\beta\in\OK{L}$. Hence, we may apply the first part of the proof to the ideal $[k\alpha, k\beta]$. Since the corresponding determinant is $k^2\det M$; the terms $k^2$ cancel out in the fractions.

Denote $a=\frac{\alpha\cjg{\alpha}}{\det M}$, $b=\frac{\cjg{\alpha}\beta+\alpha\cjg{\beta}}{\det M}$, $c=\frac{\beta\cjg{\beta}}{\det M}$. To prove that $a,b,c$ are coprime, first note that $b^2-4ac = \big(\Omega-\cjg{\Omega}\big)^2=D_{\Omega}$. Therefore, if $a,b,c$ were divisible by an element $p$ in $\OK{K}\backslash\:\USet{K}$, then $\frac{D_{\Omega}}{p^2}$ would be a quadratic residue modulo 4, which is not possible by Lemma~\ref{Lemma:DOmega}.
\end{proof}

\begin{proposition}\label{Prop:InverseIdeals}
Let $[\alpha, \beta]$ be an ideal. Then 
$$[\alpha, \beta]\cdot\left[\cjg{\alpha}, -\cjg{\beta}\right]=(\det M)$$ 
as $\OK{L}$-ideals.
\end{proposition}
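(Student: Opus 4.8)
The plan is to expand the product on the left-hand side and compare it with the principal ideal $(\det M)$ by a double inclusion. As $\OK{L}$-ideals,
$$[\alpha,\beta]\cdot\left[\cjg{\alpha},-\cjg{\beta}\right]=\left(\alpha\cjg{\alpha},\ \alpha\cjg{\beta},\ \cjg{\alpha}\beta,\ \beta\cjg{\beta}\right),$$
the ideal generated over $\OK{L}$ by the four pairwise products of the generators (signs being irrelevant for the ideal they generate). So it suffices to show, on the one hand, that each of these four elements is divisible by $\det M$ in $\OK{L}$, and on the other hand, that $\det M$ itself lies in the product.

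First I would establish the inclusion $\subseteq(\det M)$. By Lemma \ref{Lemma:IntegralCoeff} the quotients $\tfrac{\alpha\cjg{\alpha}}{\det M}$ and $\tfrac{\beta\cjg{\beta}}{\det M}$ already lie in $\OK{K}$, so $\alpha\cjg{\alpha}$ and $\beta\cjg{\beta}$ are multiples of $\det M$. The cross terms are the delicate point: only their \emph{sum} $\cjg{\alpha}\beta+\alpha\cjg{\beta}=b\det M$ is visibly divisible by $\det M$ (with $b\in\OK{K}$ as in that lemma), so each summand must be handled separately. For this I would use the matrix identity
$$\begin{pmatrix}\cjg{\alpha}&\alpha\\\cjg{\beta}&\beta\end{pmatrix}=M\begin{pmatrix}1&1\\\cjg{\Omega}&\Omega\end{pmatrix},$$
whose determinant gives $\cjg{\alpha}\beta-\alpha\cjg{\beta}=(\det M)\bigl(\Omega-\cjg{\Omega}\bigr)=(\det M)\sqrt{D_{\Omega}}$, an element divisible by $\det M$ in $\OK{L}$ since $\sqrt{D_{\Omega}}\in\OK{L}$. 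Combining the sum with this difference yields $2\alpha\cjg{\beta}=(\det M)\bigl(b-\sqrt{D_{\Omega}}\bigr)$, so that $\tfrac{\alpha\cjg{\beta}}{\det M}=\tfrac{b-\sqrt{D_{\Omega}}}{2}$; this element is a root of $x^2-bx+ac$ (using $b^2-4ac=D_{\Omega}$ from Lemma \ref{Lemma:IntegralCoeff}), hence integral over $\OK{K}$ and therefore in $\OK{L}$. The identical argument applies to $\cjg{\alpha}\beta$, and the first inclusion follows.

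For the reverse inclusion $(\det M)\subseteq$, I would invoke the coprimality of $a,b,c$ from Lemma \ref{Lemma:IntegralCoeff} together with the fact that $\OK{K}$ is a principal ideal domain (the class number of $K$ being one). Coprimality means there exist $x,y,z\in\OK{K}$ with $ax+by+cz=1$; multiplying by $\det M$ gives
$$\det M=x\,(\alpha\cjg{\alpha})+y\,(\cjg{\alpha}\beta+\alpha\cjg{\beta})+z\,(\beta\cjg{\beta}),$$
which exhibits $\det M$ as an $\OK{K}$-linear, hence $\OK{L}$-linear, combination of the generators of the product ideal. Thus $\det M$ belongs to the product, and the two inclusions together give the claimed equality.

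The main obstacle I anticipate is exactly the cross-term divisibility in the first inclusion: unlike $\alpha\cjg{\alpha}$ and $\beta\cjg{\beta}$, the products $\alpha\cjg{\beta}$ and $\cjg{\alpha}\beta$ are not relative norms and need not lie in $\OK{K}$, so Lemma \ref{Lemma:IntegralCoeff} does not apply to them directly. Resolving this is precisely what the auxiliary identity for $\cjg{\alpha}\beta-\alpha\cjg{\beta}$ and the small integrality check $\tfrac{b-\sqrt{D_{\Omega}}}{2}\in\OK{L}$ are designed to overcome.
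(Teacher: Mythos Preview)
Your proof is correct but takes a genuinely different route from the paper's. You prove the equality by double inclusion: for the inclusion into $(\det M)$ you handle the cross terms $\alpha\cjg{\beta}$ and $\cjg{\alpha}\beta$ via the explicit identity $\frac{\alpha\cjg{\beta}}{\det M}=\frac{b-\sqrt{D_{\Omega}}}{2}$ and a short integrality check; for the reverse inclusion you use B\'ezout in the PID $\OK{K}$. The paper instead rescales to $J=\big[\frac{\cjg{\alpha}}{\det M},\frac{-\cjg{\beta}}{\det M}\big]$ and shows $[\alpha,\beta]\cdot J=\OK{L}$ by a norm argument: since $\norm{L/K}{[\alpha,\beta]\cdot J}=(1)=\norm{L/K}{\OK{L}}$, a cited result of Mann reduces everything to checking $1\in [\alpha,\beta]\cdot J$, which follows from the same coprimality you use. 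Thus the paper never proves the ``hard'' inclusion $\subseteq(\det M)$ directly at all---the norm equality absorbs it---whereas you make that inclusion explicit. Your argument is more self-contained (no external citation needed) and exposes the integrality of $\frac{b\pm\sqrt{D_{\Omega}}}{2}$, which is pleasant; the paper's argument is shorter and avoids the cross-term issue you flagged as the main obstacle.
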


\begin{proof}
Denote 
$$I=[\alpha, \beta], \qquad J=\left[\frac{\cjg{\alpha}}{\det M}, \frac{-\cjg{\beta}}{\det M}\right].$$
We will prove that $IJ=[1, \Omega]$, which is equivalent to the statement of the proposition. Note that 
$$\norm{L/K}{J}=\frac{1}{(\det M)^2}\norm{L/K}{[\cjg{\alpha}, -\cjg{\beta}]}=\left(\frac{1}{\det M}\right);$$
therefore, $\norm{L/K}{IJ}=(1)$ by the multiplicativity of the norm. Since we have $\norm{L/K}{[1, \Omega]}=(1)$, to prove that $[1, \Omega]=IJ$, we only need to show that $1, \Omega \in IJ$ (see \cite[Cor. to Th. 1]{Mann}). As $IJ$ is an $\OK{L}$-ideal as well, it even suffices to prove that $1\in IJ$.

Clearly, 
$$IJ=\left[\frac{\alpha\cjg{\alpha}}{\det M}, \frac{-\alpha\cjg{\beta}}{\det M}, \frac{\cjg{\alpha}\beta}{\det M}, \frac{-\beta\cjg{\beta}}{\det M} \right]_{\OK{K}}.$$
By Lemma~\ref{Lemma:IntegralCoeff},
$$\gcd\left(\frac{\alpha\cjg{\alpha}}{\det M},\frac{\cjg{\alpha}\beta+\alpha\cjg{\beta}}{\det M}, \frac{\beta\cjg{\beta}}{\det M} \right)=1;$$
therefore, $1\in IJ$.
\end{proof}


\subsection{Quadratic forms} \label{subsec:QFs}
By binary quadratic forms over $K$ we understand homogeneous polynomials in two variables of degree 2 with coefficients in $\OK{K}$, i.e., $Q(x, y)=ax^2+bxy+cy^2$ with $a, b, c \in \OK{K}$. For abbreviation, we will refer to them as \emph{quadratic forms}. By $\Disc(Q)$ we denote the discriminant of the quadratic form $Q$, i.e., $\Disc(Q)=b^2-4ac$. Comparing to the case of quadratic forms over $\Q$, we need to slightly broaden the equivalence relation.

\begin{definition}
Two quadratic forms $Q(x,y)$ and $\widetilde{Q}(x,y)$ are \emph{equivalent}, denoted by $Q\sim\widetilde{Q}$, if there exist elements $p, q, r, s\in\OK{K}$ satisfying $ps-qr \in \UPlusSet{K}$ and a totally positive unit $u \in \UPlusSet{K}$ such that $\widetilde{Q}(x,y)=u Q(px+qy, rx+sy)$.
\end{definition}

Let $Q(x,y)=ax^2+bxy+cy^2$ and $\widetilde{Q}(x,y)=uQ(px+qy, rx+sy)=\widetilde{a}x^2+\widetilde{b}xy+\widetilde{c}y^2$ be equivalent quadratic forms. Then 
\begin{equation}\label{Eq:equiv_QF-koef}\begin{array}{rcl} 
	\widetilde{a} & = & u(ap^2+bpr+cr^2),\\
	\widetilde{b} & = & u(2apq+b(ps+qr)+2crs),\\
	\widetilde{c} & = & u(aq^2+bqs+cs^2), 
\end{array}\end{equation}
and
\begin{equation}\label{Eq:equiv_QF-disc}\begin{array}{rl} 
	\Disc({\widetilde{Q}})&=  \widetilde{b}^2-4\widetilde{a}\widetilde{c}=u^2(ps-qr)^2(b^2-4ac)\\
	\ & = u^2(ps-qr)^2\Disc(Q).
\end{array}\end{equation}
On the other hand, there is
\begin{equation}\label{Eq:equiv_QF-koef_inverse}\begin{array}{rcl} 
	a & = & \frac{1}{u(ps-qr)^2}(\widetilde{a}s^2-\widetilde{b}rs+\widetilde{c}r^2),\\
	b & = & \frac{1}{u(ps-qr)^2}(-2\widetilde{a}qs+\widetilde{b}(ps+qr)-2\widetilde{c}pr),\\
	c & = & \frac{1}{u(ps-qr)^2}(\widetilde{a}q^2-\widetilde{b}pq+\widetilde{c}p^2). 
\end{array}\end{equation}
We say that a quadratic form $Q(x,y)$ \emph{represents}  $m\in\OK{K}$ if there exist $x_0, y_0\in\OK{K}$ such that $Q(x_0,y_0)=m$. A quadratic form $Q(x,y)=ax^2+bxy+cy^2$ is called \emph{primitive} if $\gcd(a,b,c)\in\USet{K}$. We state two lemmas classical in the case of quadratic forms over $\Q$; we omit the proofs because they are analogous to that case.

\begin{lemma}\label{Lemma:QFrepr}
Equivalent quadratic forms represent the same elements of $\OK{K}$ up to the multiplication by a totally positive unit.
\end{lemma}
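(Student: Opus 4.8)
The plan is to use the explicit substitution defining the equivalence together with the invertibility of the substitution matrix over $\OK{K}$. Suppose $\widetilde{Q}\sim Q$, so that $\widetilde{Q}(x,y)=u\,Q(px+qy,\,rx+sy)$ for some $p,q,r,s\in\OK{K}$ with $ps-qr\in\UPlusSet{K}$ and $u\in\UPlusSet{K}$. The whole content is that representations transport back and forth along this substitution, picking up exactly a totally positive unit.

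First I would prove the forward inclusion. Assume $\widetilde{Q}$ represents some $m\in\OK{K}$, say $\widetilde{Q}(x_0,y_0)=m$ with $x_0,y_0\in\OK{K}$. Setting $X_0=px_0+qy_0$ and $Y_0=rx_0+sy_0$, which again lie in $\OK{K}$ because $p,q,r,s\in\OK{K}$, we obtain $u\,Q(X_0,Y_0)=m$, hence $Q(X_0,Y_0)=u^{-1}m$. As $u\in\UPlusSet{K}$, its inverse $u^{-1}$ is again a totally positive unit, so $Q$ represents $u^{-1}m$, i.e.\ $m$ up to multiplication by a totally positive unit.

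For the reverse inclusion the key observation is that the substitution is invertible \emph{over $\OK{K}$}, not merely over $K$: since $ps-qr\in\UPlusSet{K}\subseteq\USet{K}$, the matrix $\bigl(\begin{smallmatrix}p&q\\r&s\end{smallmatrix}\bigr)$ has a unit determinant, so its inverse has entries in $\OK{K}$. Thus, given a representation $Q(X_0,Y_0)=n$, one can solve $px_0+qy_0=X_0$ and $rx_0+sy_0=Y_0$ for $x_0,y_0\in\OK{K}$, and then $\widetilde{Q}(x_0,y_0)=u\,Q(X_0,Y_0)=un$ with $u\in\UPlusSet{K}$. Equivalently, one can invoke the symmetry of $\sim$ and the inverse formulas (\ref{Eq:equiv_QF-koef_inverse}) to reduce the reverse inclusion directly to the forward one.

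I do not expect any substantial obstacle; the only points requiring care are that $p,q,r,s\in\OK{K}$ keeps the substituted arguments integral, and that $ps-qr$ being a unit makes the substitution invertible over $\OK{K}$ so that representations transport in both directions. The totally positive hypotheses on $u$ and on $ps-qr$ are precisely what guarantee that the remaining ambiguity is exactly by a totally positive unit.
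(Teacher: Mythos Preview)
Your argument is correct and is exactly the classical substitution argument the paper has in mind; in fact the paper does not give a proof at all, noting only that it is analogous to the case over $\Q$. Your write-up fills in precisely that omitted argument.
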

\begin{lemma}
Let $Q$ be a primitive quadratic form, and $\widetilde{Q}\sim Q$. Then $\widetilde{Q}$ is also primitive.
\end{lemma}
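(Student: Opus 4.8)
The plan is to show that any common divisor of the coefficients of $\widetilde{Q}$ is forced to divide all the coefficients of $Q$, and then to invoke the primitivity of $Q$. Since $K$ has class number one, $\OK{K}$ is a principal ideal domain, so the greatest common divisor of finitely many elements is well defined up to a unit; I will use this freely.

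Write $\widetilde{Q}(x,y)=\widetilde{a}x^2+\widetilde{b}xy+\widetilde{c}y^2$ and let $g=\gcd(\widetilde{a},\widetilde{b},\widetilde{c})$. The goal is to prove that $g\in\USet{K}$. The essential observation is that in the inverse substitution formulas \eqref{Eq:equiv_QF-koef_inverse} the common factor $\frac{1}{u(ps-qr)^2}$ is the inverse of a unit: indeed $u\in\UPlusSet{K}$ and $ps-qr\in\UPlusSet{K}$, so $u(ps-qr)^2\in\USet{K}$. Clearing this denominator, each of $u(ps-qr)^2\,a$, $u(ps-qr)^2\,b$, $u(ps-qr)^2\,c$ is exhibited by \eqref{Eq:equiv_QF-koef_inverse} as an $\OK{K}$-linear combination of $\widetilde{a},\widetilde{b},\widetilde{c}$ (the coefficients being products of $p,q,r,s\in\OK{K}$), and is therefore divisible by $g$. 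Since $u(ps-qr)^2$ is a unit, it follows that $g$ divides each of $a$, $b$, and $c$.

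Hence $g$ divides $\gcd(a,b,c)$, which is a unit because $Q$ is primitive; consequently $g$ itself is a unit, i.e.\ $\widetilde{Q}$ is primitive. There is no real obstacle here beyond recording that the scalar $u(ps-qr)^2$ appearing in \eqref{Eq:equiv_QF-koef_inverse} lies in $\USet{K}$—this is exactly what makes the equivalence symmetric at the level of contents, and it is the only place where the hypotheses $u,ps-qr\in\UPlusSet{K}$ enter. Equivalently, one could argue from \eqref{Eq:equiv_QF-koef} that $\gcd(a,b,c)$ divides $g$ up to the unit $u$ and combine the two divisibilities, but the one-directional argument above already suffices.
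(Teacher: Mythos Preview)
Your argument is correct: the inverse substitution formulas \eqref{Eq:equiv_QF-koef_inverse} express unit multiples of $a,b,c$ as $\OK{K}$-linear combinations of $\widetilde a,\widetilde b,\widetilde c$, so any common divisor of the latter divides $\gcd(a,b,c)\in\USet{K}$. The paper itself omits this proof, remarking only that it is analogous to the classical case over $\Q$; your write-up is exactly that standard argument, carried out in $\OK{K}$.
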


We are interested in quadratic forms of given discriminant, namely  $D_{\Omega}=\left(\Omega-\cjg{\Omega}\right)^2$. But from (\ref{Eq:equiv_QF-disc}) we can see that equivalent quadratic forms do not always have the same discriminant; their discriminants may differ from each other by a square of a totally positive unit. Therefore, we will consider all quadratic forms of discriminants belonging to the set
$$\DSet=\left\{u^2\left(\Omega-\cjg{\Omega}\right)^2 ~\big|~ u \in \UPlusSet{K}\right\}.$$
Note that all the elements of $\DSet$ are fundamental (by Lemma~\ref{Lemma:DOmega}). We will denote by $\QFSet$ the set of all primitive quadratic forms of discriminant in $\DSet$ modulo the equivalence relation described above:
$$\QFSet=\quotient{\left\{Q(x,y)=ax^2+bxy+cy^2 ~\big|~ a,b,c\in\OK{K},\ \gcd(a,b,c)\in\USet{K}, \ \Disc(Q)\in\DSet \right\}}{\sim}.$$

\begin{remark}\label{rem:equivQF}
Note that if $\Disc(Q)=u^2D_{\Omega}$ for $u\in\UPlusSet{K}$, then $\Disc\left(\frac{1}{u}Q \right)=D_{\Omega}$. Hence, in every class of $\QFSet$, there is a quadratic form of discriminant exactly $D_{\Omega}$.

If $K$ is totally real (and of narrow class number one), then every totally positive unit is square of a unit (for a reference, see \cite[Prop. 2.4]{Edgar-Mollin-Peterson}). Consider equivalent quadratic forms $Q$ and $Q'$, such that $\Disc(Q)=\Disc(Q')$, i.e., 
$$Q'(x, y)=\frac{1}{ps-qr}Q(px+qy,rx+sy)$$
for some $p,q,r,s \in \OK{K}$. 
If $u\in\USet{K}$ is such that $ps-qr=u^2$, then 
$$Q\left(\frac pux+\frac quy, \frac rux + \frac suy \right)=Q'(x,y)$$
gives the equivalence of $Q$ and $Q'$ with determinant $1$. Therefore, in this case, our notion could be simplified to quadratic forms of the discriminant exactly $D_{\Omega}$, and to the equivalence by the matrices of determinant $1$. But then a change of basis of an ideal may lead to obtaining inequivalent quadratic forms, as we will see in Subsection~\ref{Section:Ideals_to_forms} (see Remark~\ref{rem:QFmultUnit}). 
\end{remark}

Let us prove two lemmas, which will be useful later.

\begin{lemma}\label{Lemma:automorphs}
Let $\mu\in\USet{L}$, and let $Q(x,y)$ be a quadratic form with $\Disc(Q)\in\DSet$. Then there exist $p_0,q_0,r_0,s_0 \in\OK{K}$ such that $p_0s_0-q_0r_0=\mu\cjg{\mu}$ and $Q(x,y) = \frac{1}{p_0s_0-q_0r_0}Q(p_0x+q_0y, r_0x+s_0y)$.
\end{lemma}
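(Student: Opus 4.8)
The plan is to realize $Q$ as a relative norm form attached to a lattice in $L$ and to let $\mu$ act on that lattice by multiplication; since $\mu\in\USet{L}$ is a unit, it stabilizes the lattice, and the resulting change of $\OK{K}$-basis is exactly the substitution we are after.

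First I would reduce to the case $\Disc(Q)=D_{\Omega}$. If $\Disc(Q)=u^2D_{\Omega}$ with $u\in\UPlusSet{K}$, then $Q_0:=\tfrac1u Q$ has coefficients in $\OK{K}$ and $\Disc(Q_0)=D_{\Omega}$; since the asserted identity is unaffected by scaling $Q$ by the unit $u$ (neither the substitution nor its determinant involves the overall constant), it suffices to treat $Q_0$. So assume $Q(x,y)=ax^2+bxy+cy^2$ with $b^2-4ac=D_{\Omega}$, and note $a\neq 0$, for otherwise $D_{\Omega}=b^2$ would be a square in $K$, contradicting $[L:K]=2$. Set $\theta=\frac{-b+\sqrt{D_{\Omega}}}{2}$, so that $\theta+\cjg{\theta}=-b$ and $\theta\cjg{\theta}=ac$; then $\theta$ is a root of the monic polynomial $X^2+bX+ac$, hence $\theta\in\OK{L}$, and a direct expansion gives $\norm{L/K}{ax-\theta y}=a\,Q(x,y)$.

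Next I would show that the $\OK{K}$-lattice $\mathfrak{a}=[a,\theta]$ is an $\OK{L}$-ideal. Since $\OK{L}=[1,\Omega]$ and $\mathfrak a$ is already an $\OK{K}$-module, it is enough to check $\Omega a\in\mathfrak a$ and $\Omega\theta\in\mathfrak a$. Writing $t:=\Omega-\theta=\tfrac{b-w}{2}$, which lies in $\OK{L}\cap K=\OK{K}$, one computes $\Omega a=a\theta+ta$ and $\Omega\theta=\theta^2+t\theta=(t-b)\theta-ac$, both of which lie in $\mathfrak a$ because $t-b,\,-c\in\OK{K}$. Hence $\OK{L}\mathfrak a\subseteq\mathfrak a$, and then $\mu\mathfrak a\subseteq\mathfrak a$ and $\mu^{-1}\mathfrak a\subseteq\mathfrak a$ force $\mu\mathfrak a=\mathfrak a$. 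Thus multiplication by $\mu$ is an $\OK{K}$-automorphism of the free rank-two module $\mathfrak a$, giving $N=\left(\begin{smallmatrix}p_0 & q_0\\ r_0 & s_0\end{smallmatrix}\right)$ with entries in $\OK{K}$ such that $\mu a=p_0a+q_0\theta$ and $\mu\theta=r_0a+s_0\theta$. Applying $\tau$ shows $\binom{a}{\theta}$ and $\binom{a}{\cjg{\theta}}$ are eigenvectors of $N$ with eigenvalues $\mu,\cjg{\mu}$, so $\det N=\mu\cjg{\mu}=\norm{L/K}{\mu}$, a unit of $\OK{K}$; in particular $p_0s_0-q_0r_0=\mu\cjg{\mu}$.

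Finally I would transfer this basis change to a substitution on $Q$. Using the two relations for $\mu a$ and $\mu\theta$,
\[
\mu(ax-\theta y)=a(p_0x-r_0y)-\theta(-q_0x+s_0y),
\]
so taking relative norms and invoking $\norm{L/K}{ax-\theta y}=aQ(x,y)$ gives
\[
\mu\cjg{\mu}\,Q(x,y)=Q(p_0x-r_0y,\,-q_0x+s_0y).
\]
The substitution matrix here has determinant $p_0s_0-r_0q_0=\mu\cjg{\mu}$, and renaming the entries (replacing $(q_0,r_0)$ by $(-r_0,-q_0)$) puts this in the exact stated form $Q(x,y)=\frac{1}{p_0s_0-q_0r_0}Q(p_0x+q_0y,r_0x+s_0y)$ with $p_0s_0-q_0r_0=\mu\cjg{\mu}$. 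The one genuinely substantive step is the claim that $\mathfrak a=[a,\theta]$ is stable under all of $\OK{L}$, so that the full unit $\mu$ acts on it; this is where the monic relation $\theta^2+b\theta+ac=0$ and the integrality $\Omega-\theta\in\OK{K}$ — both consequences of $\Disc(Q)=D_{\Omega}$ — are used, and everything else is formal.
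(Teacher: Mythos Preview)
Your proof is correct, and it takes a genuinely different route from the paper's. The paper argues by brute force: it writes $\mu=\frac{u}{2}+\frac{v}{2}\sqrt{D_\Omega}$, sets up the system of equations that $p_0,q_0,r_0,s_0$ must satisfy, and exhibits the explicit solution $p_0=\tfrac{u-bv}{2}$, $q_0=-cv$, $r_0=av$, $s_0=\tfrac{u+bv}{2}$, with a separate check that $p_0,s_0\in\OK{K}$. Your approach is conceptual: you recognize $aQ$ as the norm form on the $\OK{K}$-lattice $\mathfrak a=[a,\theta]$, verify that $\mathfrak a$ is in fact an $\OK{L}$-ideal (this is exactly the content of Proposition~\ref{Prop:Psi_Im}, which you prove directly here via $t=\Omega-\theta\in\OK{L}\cap K=\OK{K}$), and then let the unit $\mu$ act. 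The integrality of the matrix entries and the determinant identity $\det N=\mu\cjg{\mu}$ fall out automatically from the eigenvector description, with no ad hoc verification needed. The trade-off is that the paper's proof is entirely elementary and yields explicit coordinates, while yours is cleaner but previews the form--ideal correspondence that the paper only develops in Section~\ref{sec:Correspondence}; since you establish the needed $\OK{L}$-stability from scratch, there is no circularity.
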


\begin{proof}
Let $Q(x, y)= ax^2+ bxy+cy^2$, and assume that $\Disc(Q)=D_{\Omega}$ (multiply $Q$ by a suitable totally positive unit if needed). There exist $u, v \in\OK{K}$ such that $\mu=\frac{u}{2}+\sqrt{D_{\Omega}}\frac{v}{2}$; hence we have $\mu\cjg{\mu}=\left(\frac{u}{2}\right)^2-D_{\Omega}\left(\frac{v}{2}\right)^2$. From the condition $p_0s_0-q_0r_0=\mu\cjg{\mu}$ and by comparing the coefficients of the quadratic forms $(p_0s_0-q_0r_0)Q(x,y)$ and $Q(p_0x+q_0y, r_0x+s_0y)$, we get the following system of equations:

\begin{equation}\label{Eq:system1}\begin{array}{rcl}  
p_0s_0-q_0r_0 &=& \left(\frac{u}{2}\right)^2-D_{\Omega}\left(\frac{v}{2}\right)^2, \vspace{2mm} \\
(p_0s_0-q_0r_0)a &=& ap_0^2+bp_0r_0+cr_0^2, \vspace{2mm} \\
(p_0s_0-q_0r_0)b &=& 2ap_0q_0+b(p_0s_0+q_0r_0)+2cr_0s_0, \vspace{2mm} \\
(p_0s_0-q_0r_0)c &=& aq_0^2+bq_0s_0+cs_0^2.
\end{array}\end{equation}
One can check that
\begin{align*}
p_0&=\frac{u-bv}{2}, &
q_0&=-cv, &
r_0&=av, &
s_0&=\frac{u+bv}{2} 
\end{align*}
fulfill the system of equations (\ref{Eq:system1}). It remains to show that $p_0, q_0, r_0, s_0$ are elements of $\OK{K}$. This is obvious for $q_0$ and $r_0$; for $p_0$ and $s_0$ it follows then from the fact that $p_0+s_0=u$ and $p_0s_0=\mu\cjg\mu+q_0r_0$ are elements of $\OK{K}$.
\end{proof}

\begin{lemma} \label{Lemma:roots_of_equiv_QFs}
Let $Q(x,y)=ax^2+bxy+cy^2$ be a quadratic form, $p, q, r, s \in\OK{K}$ such that $ps-qr\in\UPlusSet{K}$. Consider the quadratic form $\widetilde{Q}(x,y)=Q(px+qy, rx+sy)=\widetilde{a}x^2+\widetilde{b}xy+\widetilde{c}y^2$ equivalent to $Q(x,y)$. Denote $D=\Disc(Q)$ and $\widetilde{D}=\Disc(\widetilde{Q})$. Then, under the assumption that $\sfrac{\sqrt{\widetilde{D}}}{\sqrt{D}}\in\UPlusSet{K}$, it holds 
$$\frac{p\frac{-\widetilde{b}+\sqrt{\widetilde{D}}}{2\widetilde{a}}+q}{r\frac{-\widetilde{b}+\sqrt{\widetilde{D}}}{2\widetilde{a}}+s}=\frac{-b+\sqrt{D}}{2a}.$$
\end{lemma}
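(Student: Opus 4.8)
The plan is to prove the identity by direct substitution of the transformation formulas \eqref{Eq:equiv_QF-koef} (taken with $u=1$, since here $\widetilde{Q}(x,y)=Q(px+qy,rx+sy)$) followed by a cross-multiplication that collapses using $D=b^2-4ac$. Before any computation, the single genuinely delicate point must be settled: the precise relation between the two square roots. By \eqref{Eq:equiv_QF-disc} we have $\widetilde{D}=(ps-qr)^2D$, so a priori $\sqrt{\widetilde{D}}=\pm(ps-qr)\sqrt{D}$, and the whole statement depends on which sign occurs. Writing $D=u^2\left(\Omega-\cjg{\Omega}\right)^2$ with $u\in\UPlusSet{K}$ and fixing $\sqrt{D}=u\left(\Omega-\cjg{\Omega}\right)$ in accordance with \eqref{Omega}, we have $\widetilde{D}=\big((ps-qr)u\big)^2\left(\Omega-\cjg{\Omega}\right)^2$ with $(ps-qr)u\in\UPlusSet{K}$, because $ps-qr\in\UPlusSet{K}$ by hypothesis. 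Hence the same convention for the root gives
$$\sqrt{\widetilde{D}}=(ps-qr)\sqrt{D},$$
and it is exactly here that the totally-positive-determinant assumption is used: were $ps-qr$ merely a unit, the product $(ps-qr)u$ need not be totally positive and the opposite sign could arise, which would land us on the conjugate root $\frac{-b-\sqrt{D}}{2a}$ instead.

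With this in hand, I would substitute into the left-hand side. Abbreviating $\Delta=ps-qr$, both $p\frac{-\widetilde{b}+\sqrt{\widetilde{D}}}{2\widetilde{a}}+q$ and $r\frac{-\widetilde{b}+\sqrt{\widetilde{D}}}{2\widetilde{a}}+s$ have common denominator $2\widetilde{a}$, so the quotient equals
$$\frac{-p\widetilde{b}+2q\widetilde{a}+p\Delta\sqrt{D}}{-r\widetilde{b}+2s\widetilde{a}+r\Delta\sqrt{D}}.$$
Inserting $\widetilde{a}=ap^2+bpr+cr^2$ and $\widetilde{b}=2apq+b(ps+qr)+2crs$ and collecting terms, the two \uv{constant} combinations simplify to multiples of $\Delta$: one finds $-p\widetilde{b}+2q\widetilde{a}=-\Delta(bp+2cr)$ and $-r\widetilde{b}+2s\widetilde{a}=\Delta(2ap+br)$. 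Thus every term carries a factor $\Delta$, which cancels, leaving
$$\frac{p\sqrt{D}-bp-2cr}{r\sqrt{D}+2ap+br}.$$

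It then remains to verify that this equals $\frac{-b+\sqrt{D}}{2a}$, which after cross-multiplication is the identity $2a\left(p\sqrt{D}-bp-2cr\right)=\left(-b+\sqrt{D}\right)\left(r\sqrt{D}+2ap+br\right)$; expanding the right-hand side and replacing $\left(\sqrt{D}\right)^2=D=b^2-4ac$ makes both sides equal to $2ap\sqrt{D}-2abp-4acr$. I expect the cancellation of $\Delta$ and this final substitution $\left(\sqrt{D}\right)^2=b^2-4ac$ to be entirely mechanical; the only real obstacle is the sign bookkeeping of the first paragraph, i.e. committing to a fixed choice of $\sqrt{D}$ for each discriminant in $\DSet$ and checking that the hypothesis $ps-qr\in\UPlusSet{K}$ forces $\sqrt{\widetilde{D}}=(ps-qr)\sqrt{D}$ rather than its negative. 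As a conceptual check (not needed for the computation), since $\widetilde{Q}(x,1)=Q(px+q,rx+s)$, any root $\widetilde{\xi}$ of $\widetilde{Q}(\cdot,1)$ satisfies $Q(p\widetilde{\xi}+q,r\widetilde{\xi}+s)=0$, so $\frac{p\widetilde{\xi}+q}{r\widetilde{\xi}+s}$ is automatically one of the two roots of $Q(\cdot,1)$; the sign analysis above is precisely what pins down that it is the root $\frac{-b+\sqrt{D}}{2a}$.
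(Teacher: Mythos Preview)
Your proof is correct, and in fact more explicit than the paper's on the one genuinely subtle point. The paper argues conceptually: writing $\widetilde{Q}$ in matrix form as $\begin{pmatrix}p&r\\q&s\end{pmatrix}Q\begin{pmatrix}p&q\\r&s\end{pmatrix}$, it observes that if $\widetilde{\xi}$ is a root of $\widetilde{Q}(\cdot,1)$ then $\frac{p\widetilde{\xi}+q}{r\widetilde{\xi}+s}$ is a root of $Q(\cdot,1)$, and then simply says ``one can check by direct computation (using \eqref{Eq:equiv_QF-koef} and \eqref{Eq:equiv_QF-disc})'' that it is $\frac{-b+\sqrt{D}}{2a}$ rather than its conjugate. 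You invert this emphasis: you carry out that direct computation in full (the cancellation of $\Delta$ in $-p\widetilde{b}+2q\widetilde{a}$ and $-r\widetilde{b}+2s\widetilde{a}$, then the cross-multiplication using $D=b^2-4ac$), and relegate the M\"obius-transform observation to a closing remark. What your approach buys is an honest treatment of the square-root sign: you make explicit the convention $\sqrt{D}=u\big(\Omega-\cjg{\Omega}\big)$ for the unique $u\in\UPlusSet{K}$ with $D=u^2D_\Omega$, and you isolate exactly where the hypothesis $ps-qr\in\UPlusSet{K}$ enters, namely in forcing $\sqrt{\widetilde{D}}=(ps-qr)\sqrt{D}$ rather than its negative. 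The paper's route is shorter on the page but hides this sign bookkeeping inside the phrase ``direct computation''; yours is longer but self-contained.
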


\begin{proof}
As $\frac{-\widetilde{b}+\sqrt{\widetilde{D}}}{2\widetilde{a}}$ is a root of the quadratic polynomial $\widetilde{Q}(x, 1)=\widetilde{a}x^2+\widetilde{b}x+\widetilde{c}$, we can write

\begin{equation*}
\begin{array}{rcll}
\begin{pmatrix}
	\frac{-\widetilde{b}+\sqrt{\widetilde{D}}}{2\widetilde{a}} & 1 \\
\end{pmatrix}
& \widetilde{Q} &
\begin{pmatrix}
	\frac{-\widetilde{b}+\sqrt{\widetilde{D}}}{2\widetilde{a}} \vspace{1mm} \\
	1 \\
\end{pmatrix}
& =0 \vspace{2mm}\\

\begin{pmatrix}
	\frac{-\widetilde{b}+\sqrt{\widetilde{D}}}{2\widetilde{a}} & 1 \\
\end{pmatrix}
\begin{pmatrix}
	p & r\\
	q & s\\
\end{pmatrix}
& Q &
\begin{pmatrix}
	p & q\\
	r & s\\
\end{pmatrix}
\begin{pmatrix}
	\frac{-\widetilde{b}+\sqrt{\widetilde{D}}}{2\widetilde{a}} \vspace{1mm}\\
	1 \\
\end{pmatrix}
& =0 \vspace{2mm}\\

\begin{pmatrix}
	p\frac{-\widetilde{b}+\sqrt{\widetilde{D}}}{2\widetilde{a}}+q & r\frac{-\widetilde{b}+\sqrt{\widetilde{D}}}{2\widetilde{a}}+s \\
\end{pmatrix}
& Q &
\begin{pmatrix}
	p\frac{-\widetilde{b}+\sqrt{\widetilde{D}}}{2\widetilde{a}}+q \vspace{1mm}\\
	r\frac{-\widetilde{b}+\sqrt{\widetilde{D}}}{2\widetilde{a}}+s \\
\end{pmatrix}
& =0 \vspace{2mm}\\

\begin{pmatrix}
	\frac{p\frac{-\widetilde{b}+\sqrt{\widetilde{D}}}{2\widetilde{a}}+q}{r\frac{-\widetilde{b}+\sqrt{\widetilde{D}}}{2\widetilde{a}}+s} & 1 \\
\end{pmatrix}
& Q &
\begin{pmatrix}
	\frac{p\frac{-\widetilde{b}+\sqrt{\widetilde{D}}}{2\widetilde{a}}+q}{r\frac{-\widetilde{b}+\sqrt{\widetilde{D}}}{2\widetilde{a}}+s} \vspace{1mm}\\
	1 \\
\end{pmatrix}
& =0 \\
\end{array}
\end{equation*}

In particular, $\frac{p\frac{-\widetilde{b}+\sqrt{\widetilde{D}}}{2\widetilde{a}}+q}{r\frac{-\widetilde{b}+\sqrt{\widetilde{D}}}{2\widetilde{a}}+s}$ has to be one of the roots of $Q(x, 1)$; either $\frac{-b+\sqrt{D}}{2a}$ or $\frac{-b-\sqrt{D}}{2a}$. Using that $\widetilde{D}=\widetilde{b}^2-4\widetilde{a}\widetilde{c}$, it is 
\[
\frac{p\frac{-\widetilde{b}+\sqrt{\widetilde{D}}}{2\widetilde{a}}+q}{r\frac{-\widetilde{b}+\sqrt{\widetilde{D}}}{2\widetilde{a}}+s}
= \frac{\left(2\widetilde{a}q-\widetilde{b}p+p\sqrt{\widetilde{D}}\right)\left(2\widetilde{a}s-\widetilde{b}r-r\sqrt{\widetilde{D}}\right)}{4\widetilde{a}\left(\widetilde{a}s^2-\widetilde{b}rs+\widetilde{c}r^2\right)}.
\]
Using the expression  (\ref{Eq:equiv_QF-disc}) with $u=1$ and applying the assumption that $\sqrt{\widetilde{D}}$ and $\sqrt{D}$ differ by a~totally positive unit, we get that $\sqrt{\widetilde{D}}=(ps-qr)\sqrt{D}$. Together with the first expression in (\ref{Eq:equiv_QF-koef_inverse}), we get from the above equality that the coefficient in front of $\sqrt{D}$ is $\frac{1}{2a}$ as claimed.
\end{proof}


\subsection{Relative Oriented Class Group}\label{subsec:OCl}

In the traditional correspondence, there are binary quadratic forms on one side, and the class group $\Cl{L}=\sfrac{\ISet{L}}{\PSet{L}}$, or the narrow class group $\ClPlus{L}=\sfrac{\ISet{L}}{\PPlusSet{L}}$, on the other side. Since we are working with number fields of higher degrees, the situation is a~bit more complicated. Inspired by Bhargava's definition of the class group, which considers the orientation of the bases of the ideals, we define the \emph{relative oriented class group} with respect to the extension $L/K$. Compared to the rational numbers, our base field $K$ has $r$ real embeddings; therefore, instead of one sign, we consider $r$ signs: one for every real embedding.  Later, in Section~\ref{sec:PosDef}, we will see that these signs are closely connected to the positive definiteness of the corresponding quadratic forms.

In the following, we will work with tuples of the form $\OrelI{I}{\ve}$ where $I$ is a fractional $\OK{L}$-ideal and $\ve_i\in\left\{\pm1\right\}$ for $i=1,\dots,r$. Since any $\OK{L}$-ideal can be described by an $\OK{K}$-module basis as $I=[\alpha, \beta]$, we can assign the values to the $\ve_i$'s by looking at the orientation of this basis at each real embedding: For every $i=1,\dots,r$ we set $\ve_i=\sgn(\sigma_i(\det M))$ where (as before)
\[\begin{pmatrix}
		\cjg{\alpha} & \alpha \\
		\cjg{\beta} & \beta \\ 
	\end{pmatrix}
	= M\cdot
	\begin{pmatrix}
		1 & 1 \\
		\cjg{\Omega} & \Omega \\
	\end{pmatrix},
\]
i.e., $\det M = \frac{\cjg{\alpha}\beta-\alpha\cjg{\beta}}{\Omega-\cjg{\Omega}}$. To shorten the notation, we set $\underline{\sgn}(a)=(\chv{a})$ for any $a\in K$. So we get the tuple 
\[\left( [\alpha, \beta]; \chv{ \det M} \right)=\OrelIbasis{\alpha,\beta}{M}.\]

In the special case of a principal ideal $(\gamma)$, the $\OK{K}$-module basis is $[\gamma, \gamma\Omega]$. Thus, the determinant of the corresponding matrix (as above) is $\gamma\cjg\gamma=\norm{L/K}{\gamma}$, and we get the tuple
\[\OrelP{\gamma}.\]

The following lemma shows that, under our assumption that $K$ is of narrow class number one, we can find a basis of any ideal $I$ of any prescribed orientation. With this view in mind, we will slightly abuse the terminology and call both of the tuples $\OrelI{I}{\ve}$ and $\OrelIbasis{\alpha,\beta}{M}$ an \emph{oriented ideal}.

\begin{lemma}
Let $I$ be a fractional $\OK{L}$-ideal and $\ve_i\in\left\{\pm1\right\}$ for $i=1,\dots,r$. Then there exists a basis $[\alpha, \beta]$ of $I$ such that $\barsgn{\det M}=(\ve_1, \dots, \ve_r)$, where $\det M = \frac{\cjg{\alpha}\beta-\alpha\cjg{\beta}}{\Omega-\cjg{\Omega}}$.
\end{lemma}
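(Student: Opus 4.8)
The plan is to begin with \emph{any} $\OK{K}$-module basis of $I$, record the sign vector of its associated $\det M$, and then adjust this sign vector to the prescribed $(\ve_1,\dots,\ve_r)$ by rescaling a single basis element by a suitable unit. The mechanism controlling how $\det M$ transforms under a change of basis is Lemma \ref{Lemma:Change_of_orientation}, and the mechanism allowing us to hit an arbitrary sign vector is the existence of units of all signs in $\OK{K}$, which is exactly what the narrow class number one assumption on $K$ provides.

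In detail, I would first invoke the fact recalled at the start of Subsection on Ideals that, since $K$ has narrow class number one, the fractional ideal $I$ admits an $\OK{K}$-module basis $[\alpha,\beta]$. Let $M$ be the associated matrix and set $\delta=\det M=\frac{\cjg{\alpha}\beta-\alpha\cjg{\beta}}{\Omega-\cjg{\Omega}}$. By Lemma \ref{Lemma:integrality_of_detM} we have $\delta\in\f{K}$, and by Lemma \ref{Lemma:norm_ideal} it generates the nonzero ideal $\norm{L/K}{I}$, so $\delta\neq0$ (equivalently, $\delta=0$ would force $\beta/\alpha\in\f{K}$, contradicting the $\f{K}$-linear independence of $\alpha,\beta$). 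Consequently $\sigma_i(\delta)\neq0$ for every real embedding, and the sign vector $\barsgn{\delta}=(\chv{\delta})$ is well defined with entries in $\{\pm1\}$.

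Next I would select the correcting unit. Since $K$ has units of all signs, there exists $u\in\USet{K}$ with $\sgn(\sigma_i(u))=\ve_i\cdot\sgn(\sigma_i(\delta))$ for each $i=1,\dots,r$. Replacing $\alpha$ by $u\alpha$ yields a new $\OK{K}$-module basis $[u\alpha,\beta]$ of $I$, since $u$ is a unit and hence $u\alpha\OK{K}+\beta\OK{K}=\alpha\OK{K}+\beta\OK{K}=I$. The change-of-basis matrix here is $\left(\begin{smallmatrix}u&0\\0&1\end{smallmatrix}\right)$ of determinant $u$, so Lemma \ref{Lemma:Change_of_orientation} gives that the new determinant is $u\delta$. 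Therefore $\sgn(\sigma_i(u\delta))=\sgn(\sigma_i(u))\sgn(\sigma_i(\delta))=\ve_i$ for every $i$, and $[u\alpha,\beta]$ is the desired basis.

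I do not expect a serious obstacle here: once the two supporting facts are in place the argument is pure bookkeeping. The only point requiring a moment's care is that $\det M$ is genuinely nonzero, so that its signs are defined; this is immediate from Lemma \ref{Lemma:norm_ideal}. The conceptual content lies entirely in the narrow class number one hypothesis, which splits into the two ingredients the proof uses separately: class number one supplies the free rank-two basis of $I$, while units of all signs supply precisely the freedom needed to realize every prescribed orientation.
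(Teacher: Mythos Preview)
Your proposal is correct and follows essentially the same approach as the paper: start from an arbitrary $\OK{K}$-basis $[\alpha,\beta]$ of $I$ and multiply $\alpha$ by a unit of the appropriate sign vector, using that $K$ has units of all signs. The paper's own proof is a single sentence stating exactly this idea, while you have additionally made explicit the supporting lemmas (change-of-orientation formula, nonvanishing of $\det M$), but the substance is identical.
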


\begin{proof}
Consider any basis $[\alpha, \beta]$ of the ideal $I$, and multiply $\alpha$ by a unit $u\in\USet{K}$ with the appropriate $\barsgn{u}$ to get the desired orientation; such a unit exists by the assumption that the narrow class number of $\f{K}$ is one.
\end{proof}

For two oriented ideals $\OI{I}{\ve}$ and $\OI{J}{\delta}$, we can define its multiple componentwise as
$$\OI{I}{\ve}\cdot\OI{J}{\delta} = \left(IJ; \ve_1\delta_1, \dots, \ve_r\delta_r \right).$$

If these two ideals are $\OrelIbasis{\alpha,\beta}{M}$ and $\OrelIbasis{\alpha',\beta'}{M'}$, then their multiple is the ideal $[\alpha, \beta] \cdot [\alpha', \beta']$ with the orientation $\barsgn{\det M \cdot \det M'}$. The existence of a basis of the ideal $[\alpha, \beta] \cdot [\alpha', \beta']$ with such an orientation is guaranteed by the previous lemma. Note that here we need the condition that the field $K$ is of narrow class number one; otherwise, there may not exist any basis of the multiple of two ideals with the required orientation. 

If we multiply the oriented ideals  $\OrelIbasis{\alpha,\beta}{M}$ and $\OrelP{\gamma}$, one possible basis of the product is $[\gamma\alpha, \gamma\beta]$. The orientation of this basis is given by 
$$\begin{pmatrix}
		\cjg{\gamma\alpha} & \gamma\alpha \\
		\cjg{\gamma\beta} & \gamma\beta \\
	\end{pmatrix}
	= \widetilde{M}\cdot
	\begin{pmatrix}
		1 & 1 \\
		\cjg{\Omega} & \Omega \\
	\end{pmatrix},
$$
hence $\det\widetilde{M}=\gamma\cjg{\gamma}\det M$, i.e., we guessed a basis of the right orientation. Therefore,
$$\OrelP{\gamma}\cdot\OrelIbasis{\alpha, \beta}{M}=\OrelIbasis{\gamma\alpha, \gamma\beta}{\widetilde{M}}.$$

\smallskip

We can finally define the group which will stay on the other side of our desired correspondence.

\begin{definition}
Set
\begin{align*}
\OrelISet{L/K}&=\left\{ \OrelI{I}{\ve} ~\left|~ I \text{ a fractional $\OK{L}$-ideal}, \ve_i\in\left\{\pm1\right\}, i=1,\dots,r  \right\} \right., \vspace{1mm}\\ 
\OrelPSet{L/K}&=\left\{ \OrelPnorm{\gamma}{L/K} ~\left|~ \gamma \in \f{L}, \gamma\neq0  \right\} \right.. \footnotemark
\vspace{1mm} 
\end{align*}
\footnotetext{Here we use $\norm{L/K}{\gamma}$ rather than $\gamma\cjg\gamma$ for the matter of possible generalization to other than quadratic extensions; see Remark~\ref{Rem:ClassGroupGeneralization}.}
Then  the \emph{relative oriented class group} of the field extension $\f{L}/\f{K}$ is defined as
$$\OrelCl{L/K}=\quotient{\OrelISet{L/K}}{\OrelPSet{L/K}}.$$
If two oriented ideals $\OI{I}{\ve}$ and $\OI{J}{\delta}$ lie in the same class of $\OrelCl{L/K}$, we say that they are \emph{equivalent}, and we write $\OI{I}{\ve}\sim\OI{J}{\delta}$.
\end{definition}

With the multiplication defined above, $\OrelISet{L/K}$ is an abelian group and $\OrelPSet{L/K}$ is its subgroup. Therefore, the group $\OrelCl{L/K}$ is well defined. 

\begin{lemma}\label{Lemma:GroupElements}
The identity element of the group $\OrelCl{L/K}$ is $\left([1,\Omega]; +1, \dots, +1\right)$, and the inverse to $\OrelIbasis{\alpha,\beta}{M}$ is  $\OrelIbasis{\cjg{\alpha},-\cjg{\beta}}{M}$ (taking all of the oriented ideals as representatives of classes in $\OrelCl{L/K}$).
\end{lemma}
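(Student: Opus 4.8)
The plan is to read off both assertions directly from the description of $\OrelCl{L/K}$ as the quotient $\quotient{\OrelISet{L/K}}{\OrelPSet{L/K}}$: the identity element is the trivial coset, so it suffices to exhibit a representative lying in $\OrelPSet{L/K}$, and an inverse is certified by checking that the product of the two oriented ideals again lands in $\OrelPSet{L/K}$. The genuine content of the inverse claim is entirely supplied by Proposition \ref{Prop:InverseIdeals}; everything else is orientation bookkeeping.

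For the identity, I would first record that the basis $[1, \Omega]$ has $\alpha = 1$, $\beta = \Omega$, so that $\det M = \frac{\cjg{1}\,\Omega - 1\cdot\cjg{\Omega}}{\Omega - \cjg{\Omega}} = 1$ and hence $\barsgn{\det M} = (+1, \dots, +1)$; this confirms $\OrelIbasis{1, \Omega}{M} = ([1,\Omega]; +1, \dots, +1)$. To see this is the group identity, I would take $\gamma = 1$ in the definition of $\OrelPSet{L/K}$: then $(\gamma) = \OK{L} = [1, \Omega]$ and $\norm{L/K}{1} = 1$, so $([1,\Omega]; +1, \dots, +1) = \OrelPnorm{1}{L/K}$ belongs to $\OrelPSet{L/K}$ and therefore represents the trivial coset.

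For the inverse, the key preliminary step is the computation that the matrix $M'$ attached to the conjugate basis $[\cjg{\alpha}, -\cjg{\beta}]$ satisfies $\det M' = \det M$: substituting $\alpha' = \cjg{\alpha}$, $\beta' = -\cjg{\beta}$ into $\det M' = \frac{\cjg{\alpha'}\beta' - \alpha'\cjg{\beta'}}{\Omega - \cjg{\Omega}}$ yields $\frac{-\alpha\cjg{\beta} + \cjg{\alpha}\beta}{\Omega - \cjg{\Omega}} = \det M$, which also justifies the abuse of notation $\OrelIbasis{\cjg{\alpha}, -\cjg{\beta}}{M}$. By the multiplication rule on oriented ideals, the product $\OrelIbasis{\alpha, \beta}{M}\cdot\OrelIbasis{\cjg{\alpha}, -\cjg{\beta}}{M}$ is the ideal $[\alpha, \beta]\cdot[\cjg{\alpha}, -\cjg{\beta}]$ carrying the orientation $\barsgn{\det M \cdot \det M'} = \barsgn{(\det M)^2}$. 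Proposition \ref{Prop:InverseIdeals} identifies this ideal with the principal ideal $(\det M)$, and since $\det M \in \f{K}$ by Lemma \ref{Lemma:integrality_of_detM} and is nonzero (as $\alpha, \beta$ are $K$-linearly independent), each $\sigma_i\big((\det M)^2\big) > 0$, so $\barsgn{(\det M)^2} = (+1, \dots, +1)$. Thus the product equals $((\det M); +1, \dots, +1)$.

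Finally I would check that $((\det M); +1, \dots, +1)$ lies in $\OrelPSet{L/K}$ by taking $\gamma = \det M \in \f{K} \subseteq \f{L}$: then $(\gamma) = (\det M)$, and since $\det M$ is fixed by the conjugation one has $\norm{L/K}{\det M} = \det M\cdot\cjg{\det M} = (\det M)^2$, whence $\barsgn{\norm{L/K}{\det M}} = (+1, \dots, +1)$. Hence the product represents the trivial coset, so $\OrelIbasis{\cjg{\alpha}, -\cjg{\beta}}{M}$ is the inverse of $\OrelIbasis{\alpha, \beta}{M}$. I do not expect any serious obstacle here; the only point requiring care is the sign bookkeeping, namely verifying $\det M' = \det M$ and that the squared determinant produces the all-$(+1)$ sign vector, while the nontrivial ideal identity is already in hand from Proposition \ref{Prop:InverseIdeals}.
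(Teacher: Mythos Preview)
Your proof is correct and follows essentially the same approach as the paper: both verify the identity claim by noting that $M$ is the identity matrix for the basis $[1,\Omega]$, and both derive the inverse claim from Proposition~\ref{Prop:InverseIdeals} together with the observation that the product carries orientation $\barsgn{(\det M)^2}=\barsgn{\det M\,\cjg{\det M}}$, so that it equals $\OrelP{\det M}\in\OrelPSet{L/K}$. Your version is a bit more explicit in spelling out the computation $\det M'=\det M$ and in invoking Lemma~\ref{Lemma:integrality_of_detM}, but there is no substantive difference.
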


\begin{proof}
The orientation of the ideal $[1, \Omega]$ is $(+1, \dots, +1)$, because $M$ is in this case the unit matrix. Hence, the oriented ideal $\left([1,\Omega]; +1, \dots, +1\right)$ is a representative of the identity element of the group $\OrelCl{L/K}$.

We know from Proposition~\ref{Prop:InverseIdeals} that $[\alpha, \beta]\cdot\left[\cjg{\alpha}, -\cjg{\beta}\right]=(\det M)$; since the orientation of the product $[\alpha, \beta]\cdot\left[\cjg{\alpha}, -\cjg{\beta}\right]$ is $\barsgn{(\det M)^2}=\barsgn{\det M\cjg{\det M}}$, we even have that
$$\OrelIbasis{\alpha,\beta}{M}\cdot\OrelIbasis{\cjg{\alpha},-\cjg{\beta}}{M}=\OrelP{\det M} \in \OrelPSet{L/K}.$$
Thus, the oriented ideals  $\OrelIbasis{\alpha,\beta}{M}$ and $\OrelIbasis{\cjg{\alpha},-\cjg{\beta}}{M}$ represent the inverse classes in the group $\OrelCl{L/K}$.
\end{proof}

The following lemma states an easy but very useful observation; the proof is immediate. 

\begin{lemma} \label{Lemma:equivalent_ideals}
Two oriented ideals $\OI{I}{\ve}$ and $\OI{J}{\delta}$ are equivalent if and only if there exists $\gamma\in\f{L}$ such that $\gamma I = J$ and $\barsgn{\gamma\cjg{\gamma}}=\left({\ve_1}{\delta_1}, \dots, {\ve_r}{\delta_r}\right)$. Moreover, if $I=J$, then $\gamma\in\USet{L}$.
\end{lemma}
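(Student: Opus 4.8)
The plan is to unwind the definition of equivalence, which by construction means that $\OI{I}{\ve}$ and $\OI{J}{\delta}$ represent the same coset in the quotient group $\OrelCl{L/K}=\OrelISet{L/K}/\OrelPSet{L/K}$. Since $\OrelISet{L/K}$ is abelian, this happens precisely when the quotient $\OI{I}{\ve}\cdot\OI{J}{\delta}^{-1}$ lies in the subgroup $\OrelPSet{L/K}$. Using the componentwise multiplication and the fact that $\delta_i=\delta_i^{-1}$ for $\delta_i\in\{\pm1\}$, the inverse of $\OI{J}{\delta}$ is $\left(J^{-1};\delta_1,\dots,\delta_r\right)$, so that
$$\OI{I}{\ve}\cdot\OI{J}{\delta}^{-1}=\left(IJ^{-1};\ve_1\delta_1,\dots,\ve_r\delta_r\right).$$

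Next I would unfold membership in $\OrelPSet{L/K}$: the displayed oriented ideal lies in $\OrelPSet{L/K}$ exactly when there is some $\gamma\in\f{L}$ with $IJ^{-1}=(\gamma)$ and with orientation $\barsgn{\norm{L/K}{\gamma}}=\barsgn{\gamma\cjg{\gamma}}=(\ve_1\delta_1,\dots,\ve_r\delta_r)$. The ideal condition $IJ^{-1}=(\gamma)$ is equivalent to $\gamma J=I$, hence to $\gamma^{-1}I=J$. Since $\sgn\bigl(\sigma_i(\gamma\cjg{\gamma})^{-1}\bigr)=\sgn\bigl(\sigma_i(\gamma\cjg{\gamma})\bigr)$ for every real embedding $\sigma_i$, the orientation vector is unchanged when $\gamma$ is replaced by $\gamma^{-1}$; renaming $\gamma^{-1}$ as $\gamma$ therefore yields exactly $\gamma I=J$ together with $\barsgn{\gamma\cjg{\gamma}}=(\ve_1\delta_1,\dots,\ve_r\delta_r)$, which is the asserted characterization. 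Both implications are covered at once, since every step above is reversible.

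For the final assertion, suppose $I=J$. Then the relation $\gamma I=J=I$ between fractional $\OK{L}$-ideals can be multiplied by $I^{-1}$ to give $(\gamma)=\OK{L}$, forcing $\gamma\in\USet{L}$. The only genuine content here is the bookkeeping of the direction of the ideal multiplication and the invariance of the sign vector under inversion; once these are settled, the statement is a direct translation of the coset relation, in keeping with the observation that the proof is immediate.
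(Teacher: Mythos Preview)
Your proposal is correct and is precisely the routine unwinding of the coset relation that the paper has in mind when it declares the proof immediate. The only slight detour is that you first form $\OI{I}{\ve}\cdot\OI{J}{\delta}^{-1}$ and then rename $\gamma^{-1}$ as $\gamma$, whereas one could directly write $\OI{J}{\delta}=\OrelPnorm{\gamma}{L/K}\cdot\OI{I}{\ve}$; both amount to the same bookkeeping.
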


At the end of this subsection, we compare the relative oriented class group $\OrelCl{L/K}$ with the class group $\Cl{L}$ of the number field \f{L}. In the following, by $\OK{L}$ is understood the principal $\OK{L}$-ideal generated by a unit (hence the identity element in the group $\Cl{L}$), and by $\left\{\OK{L}\right\}$ the one-element group.

\begin{proposition}\label{Prop:OClDescription}
Denote $H=\left\{\barsgn{\norm{L/K}{\mu}} ~|~ \mu\in\USet{L}\right\}$ and $G$ the subgroup of $\OrelCl{L/K}$ isomorphic to $\left\{\OK{L}\right\}\times\sfrac{\pmr{r}}{H}$ (more precisely, $G$ is isomorphic to a subgroup of the group given by the representatives $(\OK{L};\pm1,\dots,\pm1)$). Then
$$\Cl{L} \simeq  \quotient{\OrelCl{L/K}}{G}\ .$$
\end{proposition}

\begin{proof}
Denote $\underline{\ve}=(\ve_1, \dots, \ve_r)$. Define maps $f$ and $g$:
$$\begin{array}{rccc}
f:&\ \left\{\OK{L}\right\}\times\pmr{r} & \longrightarrow & \OrelISet{L/K} \\
& \left(\OK{L}; \underline{\ve} \right) & \longmapsto & \left(\OK{L}; \underline{\ve} \right) \vspace{3mm}\\
g:&\ \OrelISet{L/K}&\longrightarrow&\ISet{L} \\
&\left(I;\underline{\ve}\right) & \longmapsto & I\\
\end{array}$$
Then $f$ is injective, $g$ surjective, and $\Ker{g}=\left\{\left(\OK{L};\underline{\ve}\right) ~|~ \underline{\ve}\in\pmr{r} \right\}=\Image{f}$. Consider restrictions $f'$ and $g'$ of these two maps:
$$\begin{array}{rccc}
f':&\ \left\{\OK{L}\right\}\times H & \longrightarrow & \OrelPSet{L/K} \\
& \left(\OK{L}; \barsgn{\norm{L/K}{\mu}} \right) & \longmapsto & \left((\mu); \barsgn{\norm{L/K}{\mu}} \right) \vspace{3mm}\\ 
g':&\ \OrelPSet{L/K}&\longrightarrow&\PSet{L} \\
& \OrelPnorm{\gamma}{L/K} & \longmapsto & (\gamma)\\
\end{array}$$
Note that $f'$ is indeed a restriction of $f$, as $(\mu)=\OK{L}$ for any $\mu\in\USet{L}$. Again, $f'$ is injective, $g'$ is surjective, and $\Ker{g'}=\left\{\OrelPnorm{\gamma}{L/K} ~|~ \gamma\in\USet{L}\right\}=\Image{f'}$. Hence, we obtain the following commutative diagram:

\begin{center}
\begin{tikzcd}
1 \arrow{r}
	& {\left\{\OK{L}\right\}\times\pmr{r}} \arrow{r}{f}
	& {\OrelISet{L/K}} \arrow{r}{g}
	& {\ISet{L}} \arrow{r}
	& 1 \\
1 \arrow{r}
	& {\left\{\OK{L}\right\}\times H} \arrow{r}{f'} \arrow[hookrightarrow]{u}{i_1}
	& {\OrelPSet{L/K}} \arrow{r}{g'} \arrow[hookrightarrow]{u}{i_2}
	& {\PSet{L}} \arrow{r} \arrow[hookrightarrow]{u}{i_3}
	& 1
\end{tikzcd}
\end{center}
Note that $\Coker{i_1}=\left\{\OK{L}\right\}\times\sfrac{\pmr{r}}{H}$, $\Coker{i_2}=\OrelCl{L/K}$, $\Coker{i_3}=\Cl{L/K}$, and $\Ker{i_3}=1$. Hence, by snake lemma, there is a short exact sequence:
\begin{center}
\begin{tikzcd}
1 \arrow{r}
	& \left\{\OK{L}\right\}\times\sfrac{\pmr{r}}{H} \arrow{r}{\iota}
	& {\OrelCl{L/K}} \arrow{r}
	& {\Cl{L}} \arrow{r}
	& 1
\end{tikzcd}
\end{center}
This sequence gives us the required isomorphism
$$\Cl{L} \simeq  \quotient{\OrelCl{L/K}}{G}\ $$
with $G=\iota\left(\left\{\OK{L}\right\}\times\sfrac{\pmr{r}}{H}\right)$.
\end{proof}

\begin{remark} \label{Rem:ClassGroupGeneralization}
The relative oriented class group could be defined for any finite Galois extension $E/F$ where the field $F$ is of narrow class number one; Proposition~\ref{Prop:OClDescription} would remain true without any modifications.
\end{remark}

\section{Correspondence between Ideals and Quadratic Forms}\label{sec:Correspondence}

\subsection{From Ideals to Quadratic Forms} \label{Section:Ideals_to_forms}

Let us define a map attaching a quadratic form to an oriented ideal.
$$\begin{array}{cccc}
\Phi: & \OrelCl{L/K} & \longrightarrow & \QFSet  \vspace{2mm}	\\
 & \OrelIbasis{\alpha,\beta}{M} & \longmapsto & \frac{1}{\det M} \norm{L/K}{\alpha x-\beta y }=\frac{\alpha\cjg{\alpha}x^2-(\cjg{\alpha}\beta+\alpha\cjg{\beta})xy+\beta\cjg{\beta}y^2}{\det M}\\
\end{array}$$
To be accurate, the map $\Phi$ goes between the classes of oriented ideals and classes of quadratic forms; we omit the classes to relax the notation. We have to check that this map is well defined. First, let us see that the image of this map lies in $\QFSet$.

\begin{proposition}\label{Prop:Phi_Im}
Let $\OrelIbasis{\alpha,\beta}{M}$ be a representative of a class in $\OrelCl{L/K}$, and denote by $Q_{\alpha,\beta}$ its image under the map $\Phi$. Then $Q_{\alpha,\beta}$ is a representative of a class of $\QFSet$.
\end{proposition}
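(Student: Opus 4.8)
The plan is to verify that the form $Q_{\alpha,\beta}(x,y) = ax^2 + bxy + cy^2$ with coefficients
$$a = \frac{\alpha\cjg{\alpha}}{\det M}, \qquad b = -\frac{\cjg{\alpha}\beta + \alpha\cjg{\beta}}{\det M}, \qquad c = \frac{\beta\cjg{\beta}}{\det M}$$
satisfies the three defining conditions of $\QFSet$: its coefficients lie in $\OK{K}$, it is primitive, and its discriminant lies in $\DSet$.

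First I would dispatch integrality and primitivity directly, since these have essentially already been established. Lemma \ref{Lemma:IntegralCoeff} asserts precisely that $\frac{\alpha\cjg{\alpha}}{\det M}$, $\frac{\cjg{\alpha}\beta + \alpha\cjg{\beta}}{\det M}$, and $\frac{\beta\cjg{\beta}}{\det M}$ are coprime elements of $\OK{K}$. Since $a$, $b$, $c$ coincide with these three quantities up to sign, and neither membership in $\OK{K}$ nor the value of the gcd is affected by a sign, it follows at once that $a, b, c \in \OK{K}$ and $\gcd(a,b,c) \in \USet{K}$; hence $Q_{\alpha,\beta}$ is a primitive quadratic form over $K$.

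It then remains to compute the discriminant. Using the identity $(\cjg{\alpha}\beta + \alpha\cjg{\beta})^2 - 4\,\alpha\cjg{\alpha}\,\beta\cjg{\beta} = (\cjg{\alpha}\beta - \alpha\cjg{\beta})^2$, I would obtain
$$\Disc(Q_{\alpha,\beta}) = b^2 - 4ac = \frac{(\cjg{\alpha}\beta - \alpha\cjg{\beta})^2}{(\det M)^2}.$$
Substituting $\det M = \frac{\cjg{\alpha}\beta - \alpha\cjg{\beta}}{\Omega - \cjg{\Omega}}$ cancels the numerator against the denominator and leaves $\Disc(Q_{\alpha,\beta}) = (\Omega - \cjg{\Omega})^2 = D_{\Omega}$ by (\ref{DOmega}). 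Since $D_{\Omega} \in \DSet$ (taking $u = 1$ in the defining set), all three conditions hold, and $Q_{\alpha,\beta}$ represents a class of $\QFSet$.

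The bulk of the work is carried by Lemma \ref{Lemma:IntegralCoeff}, whose proof already contains the key observation that the associated discriminant equals $D_{\Omega}$; consequently there is no serious obstacle here, and the only genuinely new step is the short discriminant computation above. I would, however, take care to track the sign discrepancy between the coefficient $b$ and the middle quantity appearing in Lemma \ref{Lemma:IntegralCoeff}, as that is the one place where a careless reading might suggest a problem where none exists.
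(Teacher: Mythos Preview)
Your proposal is correct and follows essentially the same approach as the paper: invoke Lemma~\ref{Lemma:IntegralCoeff} for integrality and primitivity, then compute the discriminant to be $(\Omega-\cjg{\Omega})^2\in\DSet$. The paper's proof simply abbreviates the discriminant calculation to ``one easily computes,'' whereas you spell it out and are more careful about the sign of the middle coefficient; otherwise the arguments are identical.
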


\begin{proof}
We have
$$Q_{\alpha,\beta}(x, y) = \frac{\alpha\cjg{\alpha}x^2-(\cjg{\alpha}\beta+\alpha\cjg{\beta})xy+\beta\cjg{\beta}y^2}{\det M}.$$
Lemma~\ref{Lemma:IntegralCoeff} ensures both that the coefficients of $Q_{\alpha, \beta}$ are elements of $\OK{K}$, and that this quadratic form is primitive. One easily computes that $\Disc(Q_{\alpha,\beta})= \left(\Omega-\cjg{\Omega}\right)^2$, which is an element of $\DSet$. 
\end{proof}

To prove that the map $\Phi$ does not depend on the choice of the representative of the class in $\OrelCl{L/K}$, we start with a lemma, which connects units from the quadratic extension with the equivalence of quadratic forms:

\begin{lemma}\label{Lemma:Units_and_equivalence}
Let $Q$ be a primitive quadratic form, and $p,q,r,s \in \OK{K}$ be such that $ps-qr\in\USet{K}$. If there exists $\mu\in\USet{L}$ such that $\barsgn{\mu\cjg{\mu}}=\barsgn{ps-qr}$, then $Q(x,y) \sim \frac{1}{ps-qr}Q(px-qy, -rx+sy)$.
\end{lemma}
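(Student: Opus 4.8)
The plan is to reduce this to the definition of equivalence by composing the given substitution with an automorph of $Q$ of a suitable determinant, supplied by Lemma \ref{Lemma:automorphs}. The crucial observation is a sign cancellation: both $ps-qr$ and $\mu\cjg{\mu}$ are units of $\OK{K}$ with $\barsgn{\mu\cjg{\mu}}=\barsgn{ps-qr}$, so their product $(ps-qr)\mu\cjg{\mu}$ is totally positive (at each real embedding $\sigma_i$ its sign is $(\pm1)^2=+1$) and hence lies in $\UPlusSet{K}$; likewise $\frac{1}{(ps-qr)\mu\cjg{\mu}}\in\UPlusSet{K}$. Thus the substitution of determinant $ps-qr$, which need not be totally positive, becomes one of totally positive determinant once we precompose with an automorph of determinant $\mu\cjg{\mu}$.

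First I would apply Lemma \ref{Lemma:automorphs} to $Q$ (whose discriminant lies in $\DSet$ in our setting) and the given unit $\mu$, obtaining $p_0,q_0,r_0,s_0\in\OK{K}$ with $p_0s_0-q_0r_0=\mu\cjg{\mu}$ and
$$\mu\cjg{\mu}\,Q(x,y)=Q(p_0x+q_0y,\,r_0x+s_0y).$$
Writing $A=\left(\begin{smallmatrix}p&q\\r&s\end{smallmatrix}\right)$ and $B=\left(\begin{smallmatrix}p_0&q_0\\r_0&s_0\end{smallmatrix}\right)$, this automorph identity is a polynomial identity in the variables, so it remains valid after any linear substitution; substituting $A\cdot(x,y)^{\mathsf T}$ yields
$$Q\big(A\cdot(x,y)^{\mathsf T}\big)=\tfrac{1}{\mu\cjg{\mu}}\,Q\big(BA\cdot(x,y)^{\mathsf T}\big),$$
and therefore
$$\frac{1}{ps-qr}\,Q\big(A\cdot(x,y)^{\mathsf T}\big)=\frac{1}{(ps-qr)\mu\cjg{\mu}}\,Q\big(BA\cdot(x,y)^{\mathsf T}\big).$$

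The right-hand side is exactly an equivalence in the sense of the definition: the matrix $C=BA$ has entries in $\OK{K}$, being a product of two integer matrices, and determinant $\det C=(p_0s_0-q_0r_0)(ps-qr)=\mu\cjg{\mu}(ps-qr)\in\UPlusSet{K}$ by the sign cancellation above, while the scalar $u=\frac{1}{(ps-qr)\mu\cjg{\mu}}$ also lies in $\UPlusSet{K}$. Hence $\frac{1}{ps-qr}Q(px+qy,rx+sy)=u\,Q\big(C\cdot(x,y)^{\mathsf T}\big)$ with $u,\det C\in\UPlusSet{K}$, which is precisely the assertion $Q\sim\frac{1}{ps-qr}Q(px+qy,rx+sy)$.

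The only real content is the production of the automorph, i.e.\ Lemma \ref{Lemma:automorphs}; once that is in hand the argument is a one-line composition of substitutions together with the sign bookkeeping, and no genuine obstacle remains. The one point to flag is that Lemma \ref{Lemma:automorphs} requires $\Disc(Q)\in\DSet$, so one should record that this holds in the ambient setting, every form under consideration having discriminant in $\DSet$.
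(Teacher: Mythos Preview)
Your proof is correct and follows essentially the same approach as the paper's: both invoke Lemma~\ref{Lemma:automorphs} to produce an automorph of $Q$ with determinant $\mu\cjg{\mu}$ and then compose it with the given substitution so that the resulting determinant and scalar are totally positive. Your version composes directly with $A$ on the right of $B$, while the paper composes with an inverse matrix, but the content is identical; your caveat that $\Disc(Q)\in\DSet$ is tacitly assumed (needed for Lemma~\ref{Lemma:automorphs}) is well taken.
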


\begin{proof}
Use the elements $p_0, q_0, r_0, s_0$ from Lemma~\ref{Lemma:automorphs}: $p_0s_0-q_0r_0=\mu\cjg{\mu}$, and
$$Q(x,y)=\frac{1}{p_0s_0-q_0r_0}Q(p_0x+q_0y, r_0x+s_0y).$$
Note that $\barsgn{p_0s_0-q_0r_0}=\barsgn{\mu\cjg{\mu}}=\barsgn{ps-qr}$, and thus $\frac{ps-qr}{p_0s_0-q_0r_0}\in\UPlusSet{K}$. We can find the equivalence between the quadratic forms $\frac{1}{ps-qr}Q(px-qy, -rx+sy)$ and $\frac{1}{p_0s_0-q_0r_0}Q(p_0x+q_0y, r_0x+s_0y)$; this equivalence is obtained by the change of coordinates given by the matrix
$$
\begin{pmatrix}
	p_0 & q_0 \\
	r_0 & s_0 \\
\end{pmatrix}
\begin{pmatrix}
	p & -q \\
	-r & s \\
\end{pmatrix}^{-1},
$$
and by the multiplication by the totally positive unit $\frac{ps-qr}{p_0s_0-q_0r_0}$. Finally,
$$\frac{1}{ps-qr}Q(px-qy, -rx+sy) \sim \frac{1}{p_0s_0-q_0r_0}Q(p_0x+q_0y, r_0x+s_0y)=Q(x,y).$$
\end{proof}

\begin{proposition}\label{Prop:Phi_repr}
The map $\Phi$ does not depend on the choice of the representative $\OrelIbasis{\alpha,\beta}{M}$.
\end{proposition}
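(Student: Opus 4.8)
The plan is to check well-definedness in two steps: first that $\Phi$ is insensitive to scaling the basis of the ideal, and then that any change of representative reduces, after such a scaling, to a unimodular change of basis of one fixed ideal.

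First I would record the \emph{scale-invariance} of $\Phi$. For $\gamma\in L$, the basis $[\gamma\alpha,\gamma\beta]$ of $\gamma[\alpha,\beta]$ has associated matrix $\widetilde{M}$ with $\det\widetilde{M}=\gamma\cjg{\gamma}\det M$ (the computation preceding Lemma \ref{Lemma:GroupElements}). Since $\norm{L/K}{\gamma\alpha x-\gamma\beta y}=\gamma\cjg{\gamma}\,\norm{L/K}{\alpha x-\beta y}$, the factor $\gamma\cjg{\gamma}$ cancels against $\det\widetilde{M}$ and $\Phi$ returns \emph{exactly} the form $Q_{\alpha,\beta}$. Next I would record how $\Phi$ behaves under a unimodular change of basis of a fixed ideal: if $[\alpha',\beta']=[p\alpha+r\beta,\,q\alpha+s\beta]$ with $ps-qr\in\USet{K}$, then Lemma \ref{Lemma:Change_of_orientation} gives $\det M'=(ps-qr)\det M$, and the rearrangement $\alpha' x-\beta' y=\alpha(px-qy)-\beta(-rx+sy)$ yields
$$Q_{\alpha',\beta'}(x,y)=\frac{1}{ps-qr}\,Q_{\alpha,\beta}(px-qy,\,-rx+sy).$$

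With these two formulas in hand, suppose $\OrelIbasis{\alpha,\beta}{M}$ and $\OrelIbasis{\alpha',\beta'}{M'}$ represent the same class, with orientations $\underline{\ve}=\barsgn{\det M}$ and $\underline{\delta}=\barsgn{\det M'}$. By Lemma \ref{Lemma:equivalent_ideals} there is $\gamma\in L$ with $\gamma[\alpha,\beta]=[\alpha',\beta']$ and $\barsgn{\gamma\cjg{\gamma}}=(\ve_1\delta_1,\dots,\ve_r\delta_r)$. Using scale-invariance I replace $[\alpha,\beta]$ by $[\gamma\alpha,\gamma\beta]$, which leaves $Q_{\alpha,\beta}$ unchanged and makes both bases span the single ideal $J=[\alpha',\beta']$. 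The $i$-th sign of the orientation of $[\gamma\alpha,\gamma\beta]$ is $\sgn(\sigma_i(\gamma\cjg{\gamma}))\,\sgn(\sigma_i(\det M))=(\ve_i\delta_i)\ve_i=\delta_i$, so the two bases of $J$ carry the \emph{same} orientation $\underline{\delta}$; hence the determinant $ps-qr$ of the change of basis between them is totally positive. The displayed formula then exhibits $Q_{\alpha',\beta'}$ as a totally positive unit times $Q_{\alpha,\beta}(px-qy,-rx+sy)$, so $Q_{\alpha,\beta}\sim Q_{\alpha',\beta'}$. (Equivalently, one may skip the sign computation and apply Lemma \ref{Lemma:Units_and_equivalence}: equivalence in $\OrelCl{L/K}$ furnishes a unit $\mu\in\USet{L}$ with $\barsgn{\mu\cjg{\mu}}=\barsgn{ps-qr}$, which is precisely its hypothesis.)

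The main obstacle is not any individual computation---both transformation formulas are routine---but the sign bookkeeping that ties them together. The delicate point is to verify that the orientation condition coming from equivalence in $\OrelCl{L/K}$ (via Lemma \ref{Lemma:equivalent_ideals}) matches exactly the totally-positive-determinant condition (equivalently, the hypothesis of Lemma \ref{Lemma:Units_and_equivalence}) needed for two quadratic forms to be equivalent. In particular one must confirm that after the homothety by $\gamma$ the residual unimodular change of basis has totally positive determinant; getting these componentwise sign identities right is the crux.
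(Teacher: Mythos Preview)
Your proof is correct and follows the same two ingredients as the paper's argument: scale invariance of $\Phi$ under multiplication by $\gamma$, and the change-of-basis identity $Q_{\alpha',\beta'}(x,y)=\frac{1}{ps-qr}Q_{\alpha,\beta}(px-qy,-rx+sy)$. The organization differs slightly: the paper treats the two effects separately---first a basis change of a fixed ideal with $ps-qr\in\USet{K}$ arbitrary (invoking Lemma~\ref{Lemma:equivalent_ideals} to obtain $\mu\in\USet{L}$ with $\barsgn{\mu\cjg{\mu}}=\barsgn{ps-qr}$ and then Lemma~\ref{Lemma:Units_and_equivalence}), and afterwards the homothety. You instead apply the homothety first, which forces the two bases of the common ideal to carry the \emph{same} orientation, so $ps-qr$ is totally positive and the equivalence of forms is immediate from the definition; Lemma~\ref{Lemma:Units_and_equivalence} (and hence the automorph construction of Lemma~\ref{Lemma:automorphs}) is not needed in your main line. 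This is a mild but genuine simplification; the alternative you mention in parentheses is exactly the paper's route.
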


\begin{proof}
First, we would like to show that the definition of $\Phi$ is independent on the choice of the basis $[\alpha, \beta]$ of an ideal, i.e., that the quadratic form arising from the basis $[p\alpha+r\beta, q\alpha+s\beta]$, such that $p,q,r,s\in\OK{K}$, $ps-qr\in\USet{K}$, is equivalent to the one obtained from the basis $[\alpha, \beta]$; but this is not true in general, since the change of the basis  may change the orientation as well. Thus, we have to add an assumption that the oriented ideals obtained from these two bases are equivalent in $\OrelCl{L/K}$: Consider two oriented ideals 
$$\mathfrak{I}=\OrelIbasis{\alpha,\beta}{M},\qquad \widetilde{\mathfrak{I}}=\OrelIbasis{p\alpha+r\beta, q\alpha+s\beta}{\widetilde{M}},$$ such that $p,q,r,s\in\OK{K}$, $ps-qr\in\USet{K}$, and assume $\mathfrak{I}\sim\widetilde{\mathfrak{I}}$.
 Denote $Q(x,y)=\Phi(\mathfrak{I})$, $\widetilde{Q}(x,y)=\Phi(\widetilde{\mathfrak{I}})$; we need to prove that $Q\sim\widetilde{Q}$.  We have
\begin{align*}
&\widetilde{Q}(x,y) =\frac{\norm{L/K}{(p\alpha+r\beta)x-(q\alpha+s\beta)y}}{\det\widetilde{M}} \\
&= \frac{\scriptstyle{
			\big(p^2\alpha\cjg{\alpha}+pr(\cjg{\alpha}\beta+\alpha\cjg{\beta})+r^2\beta\cjg{\beta}\big)x^2 
			-\big(2pq\alpha\cjg{\alpha}+(ps+qr)(\cjg{\alpha}\beta+\alpha\cjg{\beta})+2rs\beta\cjg{\beta}\big)xy 
			+\big(q^2\alpha\cjg{\alpha}+qs(\cjg{\alpha}\beta+\alpha\cjg{\beta})+s^2\beta\cjg{\beta}\big)y^2 
			}}
			{\scriptstyle{(ps-qr)\det M}}\\
&=\frac{\scriptstyle{
				\alpha\cjg{\alpha}(px-qy)^2-\left(\cjg{\alpha}\beta+\alpha\cjg{\beta}\right)(px-qy)(-rx+sy)+\beta\cjg{\beta}(-rx+sy)^2}}
			{\scriptstyle{(ps-qr)\det M}}
=\frac{1}{ps-qr}Q(px-qy, -rx+sy),
\end{align*}
where $\det\widetilde{M}=(ps-qr)\det M$ by Lemma~\ref{Lemma:Change_of_orientation}. Since $\mathfrak{I}\sim\widetilde{\mathfrak{I}}$ and $[\alpha, \beta]=[p\alpha+r\beta, q\alpha+s\beta]$,  there exists $\mu\in\USet{L}$ by Lemma~\ref{Lemma:equivalent_ideals}, such that $\barsgn{\mu\cjg{\mu}}=\barsgn{ps-qr}$. Thus, the quadratic forms $Q(x,y)$ and $\widetilde{Q}(x,y)$ are equivalent by Lemma~\ref{Lemma:Units_and_equivalence}.

\smallskip

Now, let us consider any two equivalent oriented ideals; the equivalence is given by multiplication by a principal oriented ideal $\OrelP{\gamma}$. Thanks to the previous part of the proof, we may choose any bases we like. Thus, we consider the pair of oriented ideals $\OrelIbasis{\alpha,\beta}{M}$ and $\left([\gamma\alpha,\gamma\beta]; \barsgn{\gamma\cjg{\gamma}\det M}\right)$. The situation in this case is much easier, because the image of the oriented ideal $\left([\gamma\alpha,\gamma\beta]; \barsgn{\gamma\cjg{\gamma}\det M}\right)$ under the map $\Phi$ is 
\begin{multline*}
\frac{1}{\gamma\cjg{\gamma}\det M} \norm{L/K}{\gamma\alpha x-\gamma\beta y}
=\frac{\gamma\alpha\cjg{\gamma\alpha}x^2-(\cjg{\gamma\alpha}\gamma\beta+\gamma\alpha\cjg{\gamma\beta})xy+\gamma\beta\cjg{\gamma\beta}y^2}{\gamma\cjg{\gamma}\det M} \\
=\frac{\alpha\cjg{\alpha}x^2-(\cjg{\alpha}\beta+\alpha\cjg{\beta})xy+\beta\cjg{\beta}y^2}{\det M},
\end{multline*}
which is identical to the quadratic form obtained from $\OrelIbasis{\alpha,\beta}{M}$.
\end{proof}

\begin{remark}\label{rem:QFmultUnit}
The first part of the previous proof explains why we allowed multiplication by a totally positive unit in the definition of the equivalence of quadratic forms. Otherwise, the change of the basis from $[\alpha, \beta]$ to $[p\alpha+r\beta, q\alpha+s\beta]$ may result in inequivalent quadratic forms. (The totally positive unit is hidden in the use of Lemma~\ref{Lemma:Units_and_equivalence}.)
\end{remark}

\subsection{From Quadratic Forms to Ideals} \label{Section:Forms_to_ideals}

To get an oriented ideal from a quadratic form, define a map (again, we omit to write the classes)
$$\begin{array}{cccc}
\Psi: & \QFSet  & \longrightarrow & \OrelCl{L/K} \vspace{2mm}	\\
 & Q(x,y)=ax^2+bxy+cy^2 & \longmapsto & \left(\left[a, \frac{-b+\sqrt{\Disc(Q)}}{2}\right];\barsgn{a}\right)\\
\end{array}$$
where the square root is chosen such that $\sqrt{\Disc(Q)}=u\sqrt{D_{\Omega}}$ for some $u\in\UPlusSet{K}$; this choice is possible because $\Disc(Q)\in\QFSet$ and it is unique since only one unit from the pair $u$ and $-u$ can be totally positive. (Recall that the value of $\sqrt{D_\Omega}$ is fixed by the choice of $\Omega$ and $\cjg{\Omega}$, see (\ref{sqrtDOmega}).)

We have to show that this map is well defined.

\begin{proposition} \label{Prop:Psi_Im}
Let $Q(x,y)=ax^2+bxy+cy^2$ be a representative of a class in $\QFSet$. Then its image under the map $\Psi$ is an element of $\OISet{L/K}$.
\end{proposition}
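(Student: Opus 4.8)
The plan is to verify directly the two conditions that make the pair $\left(\left[a, \tfrac{-b+\sqrt{\Disc(Q)}}{2}\right]; \barsgn{a}\right)$ an element of $\OISet{L/K}$: that the $\OK{K}$-module $I=\left[a, \theta\right]$, where I abbreviate $\theta=\tfrac{-b+\sqrt{\Disc(Q)}}{2}$, is actually a fractional $\OK{L}$-ideal, and that $\barsgn{a}$ is a genuine tuple of signs in $\left\{\pm1\right\}^r$. The first is the substantive part; the second reduces to checking $a\neq0$.

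First I would record the arithmetic of $\theta$. Since $Q\in\QFSet$, its discriminant lies in $\DSet$, so by definition of $\DSet$ we have $\Disc(Q)=u^2 D_\Omega=u^2\left(\Omega-\cjg{\Omega}\right)^2$ for some $u\in\UPlusSet{K}$; hence $\sqrt{\Disc(Q)}=\pm u\left(\Omega-\cjg{\Omega}\right)\in L$, so $\theta\in L$ with conjugate $\cjg{\theta}=\tfrac{-b-\sqrt{\Disc(Q)}}{2}$. Computing $\theta+\cjg{\theta}=-b$ and $\theta\cjg{\theta}=\tfrac{b^2-\Disc(Q)}{4}=ac$ shows that $\theta$ is a root of the monic polynomial $x^2+bx+ac\in\OK{K}[x]$. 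Therefore $\theta\in\OK{L}$, and I obtain the crucial relation $\theta^2=-b\theta-ac$.

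The main point is then to upgrade the $\OK{K}$-module $I=[a,\theta]$ to an $\OK{L}$-module. Using $\sqrt{D_\Omega}=2\Omega+w$ from (\ref{Omega}), I would write $\theta=m+n\Omega$ with $m,n\in\OK{K}$ (possible since $\theta\in\OK{L}=[1,\Omega]$); comparing the coefficients of $\sqrt{D_\Omega}$ gives $n=\pm u$, a \emph{unit}. Consequently $\OK{K}[\theta]=\OK{K}[\Omega]=\OK{L}$, so it suffices to check that $I$ is stable under multiplication by $\theta$. This is immediate: $a\cdot\theta\in I$ trivially, and $\theta\cdot\theta=\theta^2=(-c)a+(-b)\theta\in I$ by the relation above. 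Hence $I$ is an $\OK{L}$-module; as $a,\theta\in\OK{L}$ it is in fact an integral (a fortiori fractional) $\OK{L}$-ideal, and it is nonzero of full rank because $a\in K^\times$ while $\theta\notin K$.

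Finally I would confirm that $\barsgn{a}$ makes sense, i.e.\ that $a\neq0$. Because $L/K$ is a proper quadratic extension, $D_\Omega=\left(\Omega-\cjg{\Omega}\right)^2$ is not a square in $K$, and neither is $\Disc(Q)=u^2 D_\Omega$; were $a=0$ we would have $\Disc(Q)=b^2$, a square, a contradiction. Thus $a\neq0$, every $\sigma_i(a)$ is a nonzero real number, and $\barsgn{a}\in\left\{\pm1\right\}^r$. I expect the only genuinely delicate step to be the passage from an $\OK{K}$-module to an $\OK{L}$-ideal, and this is precisely where the hypothesis $\Disc(Q)\in\DSet$ is essential: it forces the $\Omega$-coefficient of $\theta$ to be a unit, so that $\theta$ generates the \emph{maximal} order $\OK{L}$ rather than a proper suborder. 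Without this, $[a,\theta]$ would in general be an ideal only for the order $\OK{K}[\theta]$.
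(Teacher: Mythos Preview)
Your proof is correct and follows essentially the same strategy as the paper: both show the $\OK{K}$-module $[a,\theta]$ is stable under a ring generator of $\OK{L}$ over $\OK{K}$. The paper chooses $\Omega$ and writes out $a\Omega$ and $\theta\Omega$ explicitly as $\OK{K}$-linear combinations of $a$ and $\theta$ (with coefficients involving $u$ and $w$), whereas you first observe that $\OK{K}[\theta]=\OK{L}$ (since the $\Omega$-coefficient of $\theta$ is the unit $\pm u$) and then verify closure under $\theta$ via the clean relation $\theta^2=-b\theta-ac$. Your route avoids the explicit coefficient bookkeeping and makes transparent exactly where the hypothesis $\Disc(Q)\in\DSet$ enters.

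One thing the paper does that you omit is the computation $\det M=ua$ for the basis $\left[a,\theta\right]$, confirming that $\barsgn{a}$ agrees with the natural orientation $\barsgn{\det M}$ of that basis. This is not strictly required for the bare statement of the proposition---membership in $\OISet{L/K}$ only asks for \emph{some} sign tuple, not the one determined by the basis---but the identity $\det M=ua$ is invoked later in the proof of Theorem~\ref{Theorem:Bijection}, so you may want to record it. Conversely, you verify $a\neq 0$ explicitly, which the paper leaves implicit.
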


\begin{proof}
Set $D=\Disc(Q)$, and let $D=u^2D_\Omega$ for a totally positive unit $u\in\UPlusSet{K}$. Recalling that $\Omega=\frac{-w+\sqrt{D_{\Omega}}}{2}$ by (\ref{Omega}), we have  
\begin{align*}
a\Omega&=\frac{b-uw}{2u}\cdot a + \frac{a}{u}\cdot\frac{-b+\sqrt{D}}{2},\\
\frac{-b+\sqrt{D}}{2}\Omega &= -\frac{c}{u}\cdot a-\frac{b+uw}{2u}\cdot\frac{-b+\sqrt{D}}{2}.
\end{align*}
By (\ref{Omega}) and (\ref{DOmega}), it is $w^2=(\Omega+\cjg{\Omega})^2=D_{\Omega}-4\Omega\cjg{\Omega}$, thus $u^2w^2=b^2-4ac-4u^2\Omega\cjg{\Omega}$, and it follows that $\frac{b+uw}{2u}\cdot\frac{b-uw}{2u}=-\Omega\cjg{\Omega}\in\OK{K}$. As $\frac{b+uw}{2u}+\frac{b-uw}{2u}=\frac{b}{u}\in\OK{K}$, too, it follows that $\frac{b+uw}{2u}, \frac{b-uw}{2u}\in\OK{K}$. Hence, $\left[a, \frac{-b+\sqrt{D}}{2}\right]$ is indeed an $\OK{L}$-ideal and we only need to compute the orientation of the basis. 

Let $M$ be a matrix such that
$$
\begin{pmatrix}
	a & a\\
	\frac{-b-\sqrt{D}}{2} & \frac{-b+\sqrt{D}}{2} \\
\end{pmatrix}
= M\cdot
\begin{pmatrix}
	1 & 1\\
	\cjg{\Omega} & \Omega \\
\end{pmatrix}.
$$
We need to prove that $\barsgn{\det M}=\barsgn{a}$. There is
$$ \det M= \frac{a\frac{-b+\sqrt{D}}{2} - a\frac{-b-\sqrt{D}}{2}}{\Omega-\cjg{\Omega}} = \frac{a\sqrt{D}}{\Omega-\cjg{\Omega}}=ua.$$
Since $u$ is totally positive, we have $\barsgn{\det M}=\barsgn{a}$.
\end{proof}

\begin{proposition}\label{Prop:Psi_repr}
The map $\Psi$ does not depend on the choice of the representative $Q(x,y)$.
\end{proposition}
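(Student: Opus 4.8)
The plan is to show that $\Psi$ sends equivalent forms to equivalent oriented ideals. So suppose $Q(x,y)=ax^2+bxy+cy^2$ and $\widetilde{Q}(x,y)=\widetilde{a}x^2+\widetilde{b}xy+\widetilde{c}y^2=uQ(px+qy,rx+sy)$ are equivalent, with $ps-qr\in\UPlusSet{K}$ and $u\in\UPlusSet{K}$; write $D=\Disc(Q)$ and $\widetilde{D}=\Disc(\widetilde{Q})$. Since $D\in\DSet$ is of the form $u_1^2 D_{\Omega}$ with $D_{\Omega}$ a non-square in $K$, the leading coefficient cannot vanish ($a=0$ would force $D=b^2$ to be a square), and likewise $\widetilde{a}\neq0$, so both images are defined. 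Setting $I=\left[a,\frac{-b+\sqrt{D}}{2}\right]$ and $J=\left[\widetilde{a},\frac{-\widetilde{b}+\sqrt{\widetilde{D}}}{2}\right]$, we have $\Psi(Q)=(I;\barsgn{a})$ and $\Psi(\widetilde{Q})=(J;\barsgn{\widetilde{a}})$. By Lemma \ref{Lemma:equivalent_ideals} it then suffices to produce a $\gamma\in L$ with $\gamma I=J$ and $\barsgn{\gamma\cjg{\gamma}}=\barsgn{a}\cdot\barsgn{\widetilde{a}}$ (componentwise).

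To find such a $\gamma$ I would pass to the roots $\theta=\frac{-b+\sqrt{D}}{2a}$ and $\widetilde{\theta}=\frac{-\widetilde{b}+\sqrt{\widetilde{D}}}{2\widetilde{a}}$, so that $I=a[1,\theta]$ and $J=\widetilde{a}[1,\widetilde{\theta}]$. Multiplying $Q(px+qy,rx+sy)$ by the totally positive unit $u$ rescales every coefficient without changing the associated root, so Lemma \ref{Lemma:roots_of_equiv_QFs} applies verbatim and yields $\theta=\frac{p\widetilde{\theta}+q}{r\widetilde{\theta}+s}$. Multiplying the basis $[1,\theta]$ through by $r\widetilde{\theta}+s$ gives
$$(r\widetilde{\theta}+s)[1,\theta]=[\,r\widetilde{\theta}+s,\ p\widetilde{\theta}+q\,],$$
and since the change of basis from $(1,\widetilde{\theta})$ to $(r\widetilde{\theta}+s,\,p\widetilde{\theta}+q)$ is given by the matrix $\left(\begin{smallmatrix} s & q \\ r & p \end{smallmatrix}\right)$ of determinant $ps-qr\in\USet{K}$, this $\OK{K}$-module is exactly $[1,\widetilde{\theta}]$. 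Hence $J=\widetilde{a}(r\widetilde{\theta}+s)[1,\theta]=\gamma I$ with
$$\gamma=\frac{\widetilde{a}(r\widetilde{\theta}+s)}{a}.$$

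It then remains to verify the orientation. Writing $\widetilde{a}(r\widetilde{\theta}+s)=\frac{-r\widetilde{b}+2s\widetilde{a}}{2}+\frac{r\sqrt{\widetilde{D}}}{2}$ and conjugating, a short computation using $\widetilde{D}=\widetilde{b}^2-4\widetilde{a}\widetilde{c}$ collapses the cross terms to $a^2\gamma\cjg{\gamma}=\widetilde{a}\bigl(\widetilde{a}s^2-\widetilde{b}rs+\widetilde{c}r^2\bigr)$; substituting the expression for $a$ from (\ref{Eq:equiv_QF-koef_inverse}) gives $\gamma\cjg{\gamma}=\frac{\widetilde{a}\,u(ps-qr)^2}{a}$. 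As $u$ is totally positive and $(ps-qr)^2$ is a square, both factors are positive under every real embedding, so $\barsgn{\gamma\cjg{\gamma}}=\barsgn{\widetilde{a}/a}=\barsgn{a}\cdot\barsgn{\widetilde{a}}$, which is precisely the required condition; Lemma \ref{Lemma:equivalent_ideals} then gives $\Psi(Q)\sim\Psi(\widetilde{Q})$. The one genuinely delicate point I anticipate is the bookkeeping of square roots: one must check that the single fixed choice of $\sqrt{\,\cdot\,}$ underlying the definition of $\Psi$ stays coherent under the scaling by $u$ when invoking Lemma \ref{Lemma:roots_of_equiv_QFs}, i.e.\ that $\sqrt{\widetilde{D}}=u(ps-qr)\sqrt{D}$ with the correct sign. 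The total positivity of both $u$ and $ps-qr$ is exactly what pins these choices down and, at the same time, what makes the final sign come out as $\barsgn{a}\cdot\barsgn{\widetilde{a}}$ rather than its negative in some coordinate.
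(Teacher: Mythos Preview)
Your proof is correct and follows essentially the same route as the paper: both arguments hinge on Lemma \ref{Lemma:roots_of_equiv_QFs} to relate the two ideals and then compute the norm $\widetilde{a}(\widetilde{a}s^2-\widetilde{b}rs+\widetilde{c}r^2)$, reducing it via (\ref{Eq:equiv_QF-koef_inverse}) to verify the orientation. The only difference is packaging: the paper treats the $u$-scaling separately and then decomposes the passage from $J$ to $I$ into a chain of five explicit basis operations whose orientation changes are tracked one by one, whereas you fold the $u$ into the root computation and produce the single witness $\gamma=\widetilde{a}(r\widetilde{\theta}+s)/a$ directly, invoking Lemma \ref{Lemma:equivalent_ideals} once at the end---a somewhat cleaner write-up of the same idea.
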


\begin{proof}
Let $Q(x,y)=ax^2+bxy+cy^2$ be a quadratic form of discriminant $D\in\DSet$, and $v\in\UPlusSet{K}$. Then 
\[
\Psi(vQ(x,y))=\left(\left[va, \frac{-vb+\sqrt{v^2D}}{2}\right];\barsgn{va}\right) 
=\left(\left[a, \frac{-b+\sqrt{D}}{2}\right];\barsgn{a}\right) 
=\Psi(Q(x,y)). 
\]

\smallskip

Now, let $p, q, r, s \in\OK{K}$ be such that $ps-qr\in\UPlusSet{K}$, and consider the quadratic form $\widetilde{Q}(x,y)=Q(px+qy, rx+sy)=\widetilde{a}x^2+\widetilde{b}xy+\widetilde{c}y^2$ with $\Disc(\widetilde{Q})=\widetilde{D}$. We have
\begin{align*}
\Psi(Q(x,y)) &= \left(\left[a, \frac{-b+\sqrt{D}}{2}\right];\barsgn{a}\right) , \\
\Psi(\widetilde{Q}(x,y)) &= \left(\left[\widetilde{a}, \frac{-\widetilde{b}+\sqrt{\widetilde{D}}}{2}\right];\barsgn{\widetilde{a}}\right) ;	
\end{align*}
we need to show that these two oriented ideals are equivalent.

Let us first examine only how to come from the basis $\left[\widetilde{a}, \frac{-\widetilde{b}+\sqrt{\widetilde{D}}}{2}\right]$	 to the basis $\left[a, \frac{-b+\sqrt{D}}{2}\right]$; we will deal with the orientations afterwards. 

$$\begin{array}{ccccc}
\bigg[\widetilde{a}, \frac{-\widetilde{b}+\sqrt{\widetilde{D}}}{2}\bigg]
&\xrightarrow{\mathmakebox[1.5cm]{\cdot\frac{1}{\widetilde{a}}}}
&\bigg[1, \frac{-\widetilde{b}+\sqrt{\widetilde{D}}}{2\widetilde{a}}\bigg] 
&\xrightarrow{\mathmakebox[1.5cm]{\scriptsize{\begin{pmatrix}q&p\\s&r\end{pmatrix}}}}
&\bigg[p\frac{-\widetilde{b}+\sqrt{\widetilde{D}}}{2\widetilde{a}}+q, r\frac{-\widetilde{b}+\sqrt{\widetilde{D}}}{2\widetilde{a}}+s\bigg] \\
&\xrightarrow{\mathmakebox[1.5cm]{\cdot\scriptsize{\left(r\frac{-\widetilde{b}+\sqrt{\widetilde{D}}}{2\widetilde{a}}+s\right)^{-1}}}}
&\bigg[\frac{p\frac{-\widetilde{b}+\sqrt{\widetilde{D}}}{2\widetilde{a}}+q}{r\frac{-\widetilde{b}+\sqrt{\widetilde{D}}}{2\widetilde{a}}+s}, 1\bigg] 
&\stackrel{\text{\tiny{(Lemma~\ref{Lemma:roots_of_equiv_QFs}) }}}{=}
&\bigg[\frac{-b+\sqrt{D}}{2a}, 1\bigg] \\
&\xrightarrow{\mathmakebox[1.5cm]{\scriptsize{\begin{pmatrix}0&1\\1&0\end{pmatrix}}}}
&\bigg[1,\frac{-b+\sqrt{D}}{2a}\bigg]
&\xrightarrow{\mathmakebox[1.5cm]{\cdot a}}
&\bigg[a, \frac{-b+\sqrt{D}}{2}\bigg]
\end{array}$$

Recall that the multiplication of the basis by an element $\gamma$ change the orientation by $\barsgn{\gamma\cjg{\gamma}}$, and the transformation of the basis by a matrix $M$ change the orientation by $\barsgn{\det M}$. Since $a \in\f{K}$, there is $a\cjg{a}=a^2$, which is totally positive. Thus, the multiplication by $a$ does not change the orientation; the same holds for $\frac{1}{\widetilde{a}}\in\f{K}$. Also, both $-(ps-qr)$ and $-1$ are totally negative, and hence both the transformations by the matrices $\begin{pmatrix}q&p\\s&r\end{pmatrix}$ and $\begin{pmatrix}0&1\\1&0\end{pmatrix}$ change the orientation to the opposite one; together they does not affect the orientation. Therefore, the only impact might have the multiplication by $\left(r\frac{-\widetilde{b}+\sqrt{\widetilde{D}}}{2\widetilde{a}}+s\right)^{-1}$:
\begin{align*}
&\barsgn{\left(r\frac{-\widetilde{b}+\sqrt{\widetilde{D}}}{2\widetilde{a}}+s\right)^{-1}\cjg{\left(r\frac{-\widetilde{b}+\sqrt{\widetilde{D}}}{2\widetilde{a}}+s\right)^{-1}}}\\
&=\barsgn{\left(r\frac{-\widetilde{b}+\sqrt{\widetilde{D}}}{2\widetilde{a}}+s\right)\left(r\frac{-\widetilde{b}-\sqrt{\widetilde{D}}}{2\widetilde{a}}+s\right)}\\
&=\barsgn{\left(-r\widetilde{b}+2s\widetilde{a}\right)^2-r^2\widetilde{D}}
=\barsgn{r^2\widetilde{b}^2-4rs\widetilde{a}\widetilde{b}+4s^2\widetilde{a}^2-r^2(\widetilde{b}^2-4\widetilde{a}\widetilde{c})}\\
&=\barsgn{4\widetilde{a}(\widetilde{a}s^2-\widetilde{b}rs+\widetilde{c}r^2)}
\stackrel{\text{\tiny{(\ref{Eq:equiv_QF-koef_inverse}) }}}{=} \barsgn{4\widetilde{a}(ps-qr)^2a}
=\barsgn{\widetilde{a}a}
\end{align*}
(we used the facts that $4\widetilde{a}^2$ and $4(ps-qr)^2$ are totally positive). 
All the transformations changed the orientation from $\barsgn{\widetilde{a}}$ to $\barsgn{\widetilde{a}}\cdot\barsgn{\widetilde{a}a}=\barsgn{\widetilde{a}^2a}=\barsgn{a}$, and that is exactly the desired orientation. Hence, the oriented ideals $\left(\left[a, \frac{-b+\sqrt{D}}{2}\right];\barsgn{a}\right)$ and $\left(\left[\widetilde{a}, \frac{-\widetilde{b}+\sqrt{\widetilde{D}}}{2}\right];\barsgn{\widetilde{a}}\right)$ are equivalent.
\end{proof}


\subsection{Conclusion}
We are ready to prove that the maps $\Phi$ and $\Psi$ defined in the two previous subsections are mutually inverse bijections.

\begin{theorem}\label{Theorem:Bijection}
Let \f{K} be a number field of narrow class number one with at least one real embedding, and let $D$ be a fundamental element of $\OK{K}$. Set $L=K\big(\sqrt{D}\big)$, and $\DSet=\left\{u^2D ~|~ u\in\UPlusSet{K}\right\}$. We have a bijection 
$$\begin{array}{ccc}
\QFSet
&\stackrel{1:1}{\longleftrightarrow} 
& \OrelCl{L/K} \vspace{3mm}\\
Q(x, y)=ax^2+bxy+cy^2 & \stackrel{\Psi}{\longmapsto} & \left(\left[a, \frac{-b+\sqrt{\Disc(Q)}}{2}\right];\barsgn{a}\right) \vspace{2mm}\\
\frac{\alpha\cjg{\alpha}x^2-(\cjg{\alpha}\beta+\alpha\cjg{\beta})xy+\beta\cjg{\beta}y^2}{\frac{\cjg{\alpha}\beta-\alpha\cjg{\beta}}{\sqrt{D}}} 
&\stackrel{\Phi}{\longmapsfrom}
&\left(\left[\alpha, \beta\right];\barsgn{\frac{\cjg{\alpha}\beta-\alpha\cjg{\beta}}{\sqrt{D}}}\right)   
\end{array}$$
where $\sqrt{\Disc(Q)}$ and $\sqrt{D}$ are chosen such that $\sfrac{\sqrt{\Disc(Q)}}{\sqrt{D}}\in\UPlusSet{K}$.
\end{theorem}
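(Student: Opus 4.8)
The plan is to note that all well-definedness has already been secured: Propositions \ref{Prop:Phi_Im} and \ref{Prop:Phi_repr} show that $\Phi$ is a well-defined map $\OrelCl{L/K}\to\QFSet$, while Propositions \ref{Prop:Psi_Im} and \ref{Prop:Psi_repr} do the same for $\Psi\colon\QFSet\to\OrelCl{L/K}$. Hence the theorem reduces entirely to verifying that the two composites $\Phi\circ\Psi$ and $\Psi\circ\Phi$ are the respective identity maps. I would treat the two directions separately, expecting $\Phi\circ\Psi$ to be a routine substitution and $\Psi\circ\Phi$ to carry the real content.

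For $\Phi\circ\Psi$, I would start from $Q(x,y)=ax^2+bxy+cy^2$, set $\alpha=a$ and $\beta=\frac{-b+\sqrt{\Disc(Q)}}{2}$ as dictated by $\Psi$, and compute the coefficients of $\Phi$ applied to this basis. Since $\alpha=a\in\f{K}$ one gets $\alpha\cjg{\alpha}=a^2$, while $\beta\cjg{\beta}=\frac{b^2-\Disc(Q)}{4}=ac$ and $\cjg{\alpha}\beta+\alpha\cjg{\beta}=-ab$; moreover the determinant already computed in the proof of Proposition \ref{Prop:Psi_Im} gives $\det M=a$. Substituting these into $\frac{1}{\det M}\bigl(\alpha\cjg{\alpha}x^2-(\cjg{\alpha}\beta+\alpha\cjg{\beta})xy+\beta\cjg{\beta}y^2\bigr)$ returns $ax^2+bxy+cy^2=Q(x,y)$ exactly, so $\Phi\circ\Psi$ is the identity already at the level of representatives.

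The harder direction is $\Psi\circ\Phi$. Starting from an oriented ideal $\OrelIbasis{\alpha,\beta}{M}$, the form $\Phi\bigl(\OrelIbasis{\alpha,\beta}{M}\bigr)=ax^2+bxy+cy^2$ has coefficients $a=\frac{\alpha\cjg{\alpha}}{\det M}$, $b=-\frac{\cjg{\alpha}\beta+\alpha\cjg{\beta}}{\det M}$, $c=\frac{\beta\cjg{\beta}}{\det M}$, and $\Psi$ then returns $\bigl([a,\frac{-b+\sqrt{\Disc(Q)}}{2}];\barsgn{a}\bigr)$. I would prove this is equivalent to $\OrelIbasis{\alpha,\beta}{M}$ by exhibiting the element of Lemma \ref{Lemma:equivalent_ideals} explicitly, the natural candidate being $\gamma=\frac{\cjg{\alpha}}{\det M}\in\f{L}$. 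The key computation is that, combining the defining relation $\cjg{\alpha}\beta-\alpha\cjg{\beta}=\sqrt{\Disc(Q)}\,\det M$ with $\cjg{\alpha}\beta+\alpha\cjg{\beta}=-b\det M$, one obtains $\frac{\cjg{\alpha}\beta}{\det M}=\frac{-b+\sqrt{\Disc(Q)}}{2}$; together with $\gamma\alpha=\frac{\alpha\cjg{\alpha}}{\det M}=a$ this shows $\gamma[\alpha,\beta]=[a,\frac{-b+\sqrt{\Disc(Q)}}{2}]$.

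It then remains to match the orientations, which is where I expect the only genuine subtlety. By Lemma \ref{Lemma:equivalent_ideals} I must check that $\barsgn{\gamma\cjg{\gamma}}$ equals the componentwise product of the two orientations $\barsgn{\det M}$ and $\barsgn{a}$. Since $\gamma\cjg{\gamma}=\frac{\alpha\cjg{\alpha}}{(\det M)^2}$ and $(\det M)^2$ is totally positive, $\barsgn{\gamma\cjg{\gamma}}=\barsgn{\alpha\cjg{\alpha}}$, while $\barsgn{\det M}\cdot\barsgn{a}=\barsgn{\det M\cdot a}=\barsgn{\alpha\cjg{\alpha}}$ because $a\det M=\alpha\cjg{\alpha}$; the two agree, completing $\Psi\circ\Phi=\mathrm{id}$. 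Throughout, the harmless discrepancy between $\sqrt{\Disc(Q)}$ and $\Omega-\cjg{\Omega}$ is absorbed, since $\Disc(Q)=u^2(\Omega-\cjg{\Omega})^2$ for a totally positive unit $u$ by Lemma \ref{Lemma:PosDiscr}, so it never affects any sign. The main obstacle is thus not any single computation but the bookkeeping of the orientation in this last step, together with the insight that $\gamma=\frac{\cjg{\alpha}}{\det M}$ simultaneously transports the ideal basis and delivers exactly the required signs.
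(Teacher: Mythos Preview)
Your approach is essentially the paper's: after invoking the well-definedness propositions, verify both composites directly, using the multiplier $\gamma=\cjg{\alpha}/\det M$ (the paper uses its inverse $\det M/\cjg{\alpha}$, which is the same argument) for the $\Psi\circ\Phi$ direction. One slip to fix: Proposition~\ref{Prop:Psi_Im} gives $\det M=ua$ with $u\in\UPlusSet{K}$ (coming from $\Disc(Q)=u^2D_\Omega$), not $\det M=a$, so $\Phi\circ\Psi$ returns $\tfrac{1}{u}Q\sim Q$ rather than $Q$ on the nose---the paper handles this by first reducing to $D=D_\Omega$ via Lemma~\ref{Lemma:PosDiscr} and then still only getting equality up to a totally positive unit, exactly as you should.
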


\begin{proof}
Let $\Omega$ be such that $\OK{L}=[1, \Omega]$. By Lemma~\ref{Lemma:PosDiscr}, there is a unit $u\in\USet{K}$ (not necessarily totally positive) such that $D=u^2D_{\Omega}$; then we have $\OK{L}=[1, u\Omega]$ and $u^2D_{\Omega}=(u\Omega-\cjg{u\Omega})^2$. If we begin with the canonical basis $[1, u\Omega]$ of $\OK{L}$ instead of $[1, \Omega]$, we get the same results for $D$ in the place of $D_{\Omega}$. Therefore, without loss of generality, we may assume that $D=D_{\Omega}$ (and thus $\sqrt{D}=\Omega-\cjg{\Omega}$). 

Let 
$$\mathfrak{I}=\OrelIbasis{\alpha, \beta}{M}$$ 
be a representative of a class in $\OrelCl{L/K}$, and  
$$Q_{\alpha, \beta}(x,y)=\frac{\alpha\cjg{\alpha}x^2-(\cjg{\alpha}\beta+\alpha\cjg{\beta})xy+\beta\cjg{\beta}y^2}{\det M}$$ 
its image under the map $\Phi$; here we have $\Disc(Q_{\alpha,\beta})=(\Omega-\cjg{\Omega})^2$. If we now use the map $\Psi$, we obtain the oriented ideal
$$\mathfrak{I'}=\left( \left[\frac{\alpha\cjg{\alpha}}{\det M}, \frac{\cjg{\alpha}\beta+\alpha\cjg{\beta}}{2\det M}+\frac{\Omega-\cjg{\Omega}}{2} \right]; \barsgn{\frac{\alpha\cjg{\alpha}}{\det M}}\right).$$
Since $\det M = \frac{\cjg{\alpha}\beta-\alpha\cjg{\beta}}{\Omega-\cjg{\Omega}}$, there is $\frac{\cjg{\alpha}\beta+\alpha\cjg{\beta}}{2\det M}+\frac{\Omega-\cjg{\Omega}}{2} = \frac{(\cjg{\alpha}\beta+\alpha\cjg{\beta})+(\cjg{\alpha}\beta-\alpha\cjg{\beta})}{2\det M} = \frac{\cjg{\alpha}\beta}{\det M}$. Thus, 
$$\mathfrak{I'}=\left( \left[\frac{\alpha\cjg{\alpha}}{\det M}, \frac{\cjg{\alpha}\beta}{\det M}\right]; \barsgn{\frac{\alpha\cjg{\alpha}}{\det M}}\right).$$
If we multiply $\mathfrak{I'}$ by the principal oriented ideal $\left(\left(\frac{\det M}{\cjg{\alpha}}\right); \barsgn{\frac{\det M}{\cjg{\alpha}}\frac{\det M}{\alpha}} \right)$, we get exactly the ideal $\mathfrak{I}$. Therefore, $\mathfrak{I}\sim\mathfrak{I'}=\Psi\Phi(\mathfrak{I})$, and $\Psi\circ\Phi=\mathrm{id}_{\OrelCl{L/K}}$.

\smallskip

On the other hand, consider
$$Q(x,y)=ax^2+bxy+cy^2,$$
a representative of a class in $\QFSet$. Its image under the map $\Psi$ is the oriented ideal
$$\left(\left[a, \frac{-b+\sqrt{\Disc(Q)}}{2}\right];\barsgn{a}\right).$$
Using the map $\Phi$, we get a quadratic form
$$Q'(x, y)=\frac{a^2x^2+abxy+\frac{b^2-\Disc(Q)}{4}y^2}{\det M'},$$
where
$$
\begin{pmatrix}
	a & a\\
	\frac{-b-\sqrt{\Disc(Q)}}{2} & \frac{-b+\sqrt{\Disc(Q)}}{2} \\
\end{pmatrix}
= M'\cdot
\begin{pmatrix}
	1 & 1\\
	\cjg{\Omega} & \Omega \\
\end{pmatrix},
$$
and $\det M'=u'a$ for a unit $u'\in\UPlusSet{K}$ such that $\sqrt{\Disc(Q)}=u'(\Omega-\cjg{\Omega})$. Hence,
$$Q'(x,y)=\frac{1}{u'}(ax^2+bxy+cy^2),$$
and $Q\sim Q'=\Phi\Psi(Q)$. Therefore, $\Phi\circ\Psi=\mathrm{id}_{\QFSet}$.
\end{proof}

\begin{corollary}\label{Cor:GroupStructure}
$\QFSet$ carries a group structure arising from the multiplication of ideals in $K\big(\sqrt{D}\big)$. The identity element of this group is represented by the quadratic form $x^2-(\Omega+\cjg{\Omega})xy+\Omega\cjg{\Omega}y^2$, and the inverse element to $ax^2+bxy+cy^2$ is the quadratic form $ax^2-bxy+cy^2$.
\end{corollary}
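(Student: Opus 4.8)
The plan is to transport the group structure of the relative oriented class group $\OrelCl{L/K}$ onto $\QFSet$ through the bijection established in Theorem \ref{Theorem:Bijection}. Since $\Phi$ and $\Psi$ are mutually inverse bijections between $\QFSet$ and $\OrelCl{L/K}$, I would define the product of two classes of quadratic forms by $Q_1\cdot Q_2:=\Phi\bigl(\Psi(Q_1)\cdot\Psi(Q_2)\bigr)$, where the product on the right is the componentwise multiplication of oriented ideals. This makes $\QFSet$ into a group and turns $\Psi$ (equivalently $\Phi$) into a group isomorphism, so the first assertion follows immediately; the multiplication is by construction the one coming from ideal multiplication in $K\big(\sqrt{D}\big)$. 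It then remains only to translate the identity element and the inverses of $\OrelCl{L/K}$, both already described in Lemma \ref{Lemma:GroupElements}, back into the language of quadratic forms via $\Phi$.

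For the identity element, I would take the identity of $\OrelCl{L/K}$, which by Lemma \ref{Lemma:GroupElements} is $\left([1,\Omega];+1,\dots,+1\right)$, and apply $\Phi$. Substituting $\alpha=1$ and $\beta=\Omega$ into the defining formula for $\Phi$, and noting that here $\det M=\frac{\cjg{1}\cdot\Omega-1\cdot\cjg{\Omega}}{\Omega-\cjg{\Omega}}=1$, the resulting quadratic form is exactly $x^2-(\Omega+\cjg{\Omega})xy+\Omega\cjg{\Omega}y^2$; its discriminant is $(\Omega-\cjg{\Omega})^2=D_\Omega\in\DSet$, as required.

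For the inverse, I would start from a representative $Q(x,y)=ax^2+bxy+cy^2$ and compute $\Psi(Q)=\left(\left[a,\frac{-b+\sqrt{D}}{2}\right];\barsgn{a}\right)$. By Lemma \ref{Lemma:GroupElements}, the inverse of $\OrelIbasis{\alpha,\beta}{M}$ is $\OrelIbasis{\cjg{\alpha},-\cjg{\beta}}{M}$; with $\alpha=a\in\f{K}$ and $\beta=\frac{-b+\sqrt{D}}{2}$ this is the oriented ideal with basis $\left[a,\frac{b+\sqrt{D}}{2}\right]$. Applying $\Phi$ is then a short direct calculation: the three numerator coefficients become $a\cjg{a}=a^2$, then $\cjg{a}\tfrac{b+\sqrt{D}}{2}+a\tfrac{b-\sqrt{D}}{2}=ab$, and $\tfrac{b+\sqrt{D}}{2}\cdot\tfrac{b-\sqrt{D}}{2}=\tfrac{b^2-D}{4}=ac$, while the relevant determinant equals $a$ (using $D=D_\Omega$, so $\sqrt{D}=\Omega-\cjg{\Omega}$). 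Dividing through by $a$ yields $ax^2-bxy+cy^2$, the claimed inverse.

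Throughout I would, as in the proof of Theorem \ref{Theorem:Bijection}, reduce to the normalization $D=D_\Omega$ without loss of generality, so that $\det M=\frac{\cjg{\alpha}\beta-\alpha\cjg{\beta}}{\sqrt{D}}$ simplifies cleanly. The computations themselves are routine once the group structure has been transported; the only point requiring a little care is that every identity here is a statement about \emph{classes}, so I must invoke the well-definedness of $\Phi$ and $\Psi$ (Propositions \ref{Prop:Phi_repr} and \ref{Prop:Psi_repr}) to ensure that the specific forms $x^2-(\Omega+\cjg{\Omega})xy+\Omega\cjg{\Omega}y^2$ and $ax^2-bxy+cy^2$ genuinely represent the intended classes independently of the chosen representatives. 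I do not anticipate a serious obstacle, since all the substantial content is already packaged in Theorem \ref{Theorem:Bijection} and Lemma \ref{Lemma:GroupElements}, and this corollary merely reads off the consequences.
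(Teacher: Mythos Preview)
Your proposal is correct and follows essentially the same approach as the paper: transport the group structure via the bijection of Theorem~\ref{Theorem:Bijection}, read off the identity by applying $\Phi$ to $\left([1,\Omega];+1,\dots,+1\right)$, and handle the inverse via Lemma~\ref{Lemma:GroupElements}. The only cosmetic difference is that the paper applies $\Psi$ to both $ax^2+bxy+cy^2$ and $ax^2-bxy+cy^2$ and observes that the resulting oriented ideals $\left(\left[a,\frac{-b+\sqrt{D}}{2}\right];\barsgn{a}\right)$ and $\left(\left[a,\frac{b+\sqrt{D}}{2}\right];\barsgn{a}\right)$ are mutually inverse, whereas you invert the ideal first and then push back through $\Phi$; the two routes are equivalent.
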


\begin{proof}
The group structure of $\QFSet$ is given by the bijection with $\OrelCl{L/K}$ from Theorem~\ref{Theorem:Bijection}. 
The identity element is given as the image under the map $\Phi$ of the oriented ideal $\left([1,\Omega]; +1, \dots, +1\right)$ (which represents the identity element in $\OrelCl{L/K}$); thus, the identity element is $x^2-(\Omega+\cjg{\Omega})xy+\Omega\cjg{\Omega}y^2$. 

Consider the quadratic forms $ax^2+bxy+cy^2$ and $ax^2-bxy+cy^2$, and set $D=b^2-4ac$. The images of these two quadratic forms under the map $\Phi$ are the oriented ideals 
$$\left(\left[a, \frac{-b+\sqrt{D}}{2}\right];\barsgn{a}\right) \text{ and } \left(\left[a, \frac{b+\sqrt{D}}{2}\right];\barsgn{a}\right),$$
 which represent the mutually inverse classes of $\OrelCl{L/K}$ by Lemma~\ref{Lemma:GroupElements}. Hence, the quadratic forms $ax^2+bxy+cy^2$ and $ax^2-bxy+cy^2$ are inverse to each other.
\end{proof}

\subsection{Quadratic fields}

At the end of this section, let us look at the case of quadratic fields: Assume $K=\Q$, and  $L=\Q\big(\sqrt{D}\big)$. In this case, there exists only one real embedding of $K$, and that is the identity. Thus, Proposition~\ref{Prop:OClDescription} says that
$$
\Cl{L} \simeq  \quotient{\OrelCl{L/K}}{G}
 \quad \text{ with } \quad 
G\cong\left\{\OK{L}\right\}\times\sfrac{\pmr{}}{\left\{\sgn{\norm{L/K}{\mu}} ~|~ \mu\in\USet{L}\right\}}\ .
$$
To give more precise results, we need to distinguish three possible cases according to the sign of $D$ and the existence of a negative unit:

1. If $D<0$, then there is $\norm{L/K}{\gamma}>0$ for every $\gamma\in L$; therefore, the oriented ideals $\left([\alpha, \beta]; \sgn(\det M)\right)$ and $\left([-\alpha, \beta]; -\sgn(\det M)\right)$ cannot be equivalent, and it is easy to see that $\OCl{L/K}\simeq\Cl{L}\times\pmr{}$. Moreover, the quadratic forms $ax^2+bxy+cy^2$ and $-ax^2+bxy-cy^2$ cannot be equivalent either; if one of them is positive definite, then the other one is negative definite. This explains the factor $\pmr{}$ in the relative oriented class group because in the usual correspondence only positive definite forms are considered whenever $D<0$.

2. Let $D>0$, and assume that every unit has positive norm, i.e., for every $\mu\in\USet{L}$ there is $\norm{L/K}{\mu}=+1$. Consider the following surjective homomorphism:
$$\begin{array}{cccc}
f: & \OrelISet{L/K} & \longrightarrow  & \ClPlus{L} \vspace{1mm}\\
&(I;+1) & \longmapsto  & I\PPlusSet{L} \vspace{1mm}\\
 &(I;-1) & \longmapsto & \sqrt{D}I\PPlusSet{L}\\
\end{array}$$
Since $\Ker f=\OrelPSet{L/K}$, it follows that $\OrelCl{L/K}\simeq\ClPlus{L}$. Furthermore, it is well known that in this case is $\ClPlus{L}\simeq\Cl{L}\times\pmr{}$; hence, we have as well that $\OrelCl{L/K}\simeq\Cl{L}\times\pmr{}$.

3. Finally, assume that $D>0$, and that there exists a unit of negative norm, i.e., there exists $\mu_0\in\USet{L}$ such that $\norm{L/K}{\mu_0}=-1$; then $H=\left\{(+1), (-1) \right\}=\pmr{}$. Therefore, we get directly from Proposition~\ref{Prop:OClDescription} that $\OrelCl{L/K}\simeq\Cl{L}$. Since it is well known that in this case the groups $\Cl{L}$ and $\ClPlus{L}$ are isomorphic, we have as well that $\OrelCl{L/K}\simeq\ClPlus{L}$.

Let us summarize our observations into the following proposition:

\begin{proposition}
Let $\f{K}=\Q$, and let $\f{L}=\Q\left(\sqrt{D_\Omega}\right)$ for $D_\Omega$ a fundamental element of $\Z$.
\begin{enumerate}[1.]
	\item If $D_\Omega<0$, then $\OrelCl{L/\Q}\simeq\Cl{L}\times\pmr{}$.
	\item If $D_\Omega>0$ and $\mu\overline{\mu}=1$ for every $\mu\in\USet{L}$, then $\OrelCl{L/\Q}\simeq\Cl{L}\times\pmr{}\simeq\ClPlus{L}$.
	\item If $D_\Omega>0$ and there exists $\mu\in\USet{L}$ such that $\mu\overline{\mu}=-1$, then $\OrelCl{L/\Q}\simeq\Cl{L}\simeq\ClPlus{L}$.
\end{enumerate}
\end{proposition}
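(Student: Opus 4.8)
The plan is to treat the proposition as a consolidation of the three cases already isolated in the discussion above, in each case feeding a computation of the group $H$ into Proposition \ref{Prop:OClDescription}. Since $K=\Q$ has a single real embedding we have $r=1$, so the sign group is $\pmr{}$ and $H=\left\{\barsgn{\norm{L/\Q}{\mu}} ~|~ \mu\in\USet{L}\right\}\subseteq\pmr{}$; everything therefore hinges on whether $H$ is trivial or all of $\pmr{}$.

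First I would dispose of Case~1. When $D_\Omega<0$ the field $L$ is imaginary quadratic, so $\norm{L/\Q}{\gamma}=\gamma\cjg{\gamma}>0$ for every nonzero $\gamma\in L$; in particular every unit has norm $+1$ and $H=\{+1\}$. The point is then that the orientation becomes a genuine class invariant: a principal oriented ideal has the shape $\left((\gamma);+1\right)$, so by Lemma \ref{Lemma:equivalent_ideals} passing to $\OrelPSet{L/\Q}$ identifies $(I;\ve)$ with $(J;\delta)$ exactly when $I,J$ lie in the same ideal class and $\ve=\delta$. Hence the forgetful map to $\Cl{L}$ and the sign together give $\OrelCl{L/\Q}\simeq\Cl{L}\times\pmr{}$ directly, with no extension problem to resolve.

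Next I would handle Case~3, which is equally short. If some $\mu_0\in\USet{L}$ has $\mu_0\cjg{\mu_0}=-1$, then $\barsgn{\norm{L/\Q}{\mu_0}}=-1$, so $H=\pmr{}$ and the factor $\sfrac{\pmr{}}{H}$ collapses; Proposition \ref{Prop:OClDescription} then yields $\OrelCl{L/\Q}\simeq\Cl{L}$ at once. The isomorphism $\Cl{L}\simeq\ClPlus{L}$ in the presence of a unit of norm $-1$ is the classical narrow-versus-wide class group comparison for real quadratic fields, which I would cite.

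Case~2 is where I expect the actual work, and the main obstacle. Here $D_\Omega>0$ but every unit has norm $+1$, so again $H=\{+1\}$ and $\sfrac{\pmr{}}{H}=\pmr{}$; unlike Case~1, however, the sign alone is no longer invariant, since multiplying a basis by $\sqrt{D}$ flips the orientation (as $\sqrt{D}\,\overline{\sqrt{D}}=-D<0$). The cleanest route is the explicit surjection $f\colon\OrelISet{L/\Q}\to\ClPlus{L}$ sending $(I;+1)\mapsto I\PPlusSet{L}$ and $(I;-1)\mapsto\sqrt{D}\,I\PPlusSet{L}$, and the heart of the argument is verifying $\Ker f=\OrelPSet{L/\Q}$. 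The subtle direction is that $(I;-1)\in\Ker f$ forces $\sqrt{D}\,I=(\delta)$ with $\delta$ totally positive, whence $I=\left(\delta/\sqrt{D}\right)$ and a short sign computation gives $\barsgn{\norm{L/\Q}{\delta/\sqrt{D}}}=-1$, so that $(I;-1)$ is genuinely a principal oriented ideal carrying the matching orientation. This yields $\OrelCl{L/\Q}\simeq\ClPlus{L}$, and I would finish by invoking the well-known fact that when $L$ admits no unit of norm $-1$ one has $\ClPlus{L}\simeq\Cl{L}\times\pmr{}$. The only real difficulties are the bookkeeping of signs in this kernel computation and the appeal to the standard structure theorem for $\ClPlus{L}$.
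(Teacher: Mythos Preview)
Your proposal is correct and follows essentially the same approach as the paper: the three cases are handled in the same way, most notably Case~2, where you use exactly the paper's explicit surjection $f\colon\OrelISet{L/\Q}\to\ClPlus{L}$ defined by $(I;+1)\mapsto I\PPlusSet{L}$ and $(I;-1)\mapsto\sqrt{D}\,I\PPlusSet{L}$ and identify its kernel with $\OrelPSet{L/\Q}$. If anything, you supply more detail in the kernel computation than the paper does, but the strategy is identical throughout.
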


\begin{remark}
The proposition shows us that the relative oriented class group $\OrelCl{L/K}$ (and hence the correspondence between oriented ideals and quadratic forms) is a generalization of Bhargava's view to the classical correspondence; see \cite[Section 3.2]{BhargavaLawsI}.
\end{remark}


\section{Totally positive definite quadratic forms}\label{sec:PosDef}

A very interesting and well-studied class of quadratic forms are the totally positive definite ones, which form a natural generalization of sums of squares. As such, they have been studied for example in the context of representations of totally positive integers, e.g., in \cite{Blomer-Kala:rank, Blomer-Kala:n-ary, CLSTZ, Chan-Kim-Raghavan, Earn-Khos, Kala:n-ary, KTZ, Siegel}. Of course, a binary quadratic form can never be universal; nevertheless, our results may prove to be useful also in the study of quadratic forms of higher ranks.

Recall that we assume $K$ to have at least one real embedding. If $D_\Omega$ is totally negative (i.e., $\sigma_i(D_\Omega)<0$ for all $i=1, \dots, r$; this fact will be denoted by $D_\Omega\prec0$), then we can study totally positive definite quadratic forms. Recall that a quadratic form $Q(x,y)=ax^2+bxy+cy^2$ is called \emph{totally positive definite} if it holds that $\sigma_i(Q(x,y))=\sigma_i(a)x^2+\sigma_i(b)xy+\sigma_i(c)y^2$ is positive definite for all $i=1, \dots, r$. It is clear from the matrix notation that $Q(x,y)=ax^2+bxy+cy^2$ of discriminant in $\DSet=\{u^2D_\Omega~|~u\in\UPlusSet{K}\}$ is totally positive definite if and only if $a\succ0$ (i.e., $a$ is totally positive). Hence, the image under the map $\Psi$ of a totally positive definite quadratic form $Q(x,y)=ax^2+bxy+cy^2$ is the oriented ideal $\left( \left[a, \frac12\left(-b+\sqrt{\Disc(Q)}\right) \right]; +1, \dots, +1 \right)$. 

One may ask if it is possible to describe the totally positive definite quadratic forms in terms of the oriented ideals. We start with the following lemma, which claims that if $D_\Omega\prec0$, then all ideals in the same class of $\OrelCl{\f{L}/\f{K}}$ have the same orientation. 

\begin{lemma}
If $D_\Omega\prec0$ and $\OrelI{I}{\ve} \sim \OrelI{J}{\delta}$, then $\ve_i=\delta_i$ for all $i=1, \dots, r$.
\end{lemma}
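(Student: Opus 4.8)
The plan is to use the characterization of equivalence from Lemma \ref{Lemma:equivalent_ideals} together with the totally negative sign of $D_\Omega$. Suppose $\OrelI{I}{\ve} \sim \OrelI{J}{\delta}$. By Lemma \ref{Lemma:equivalent_ideals} there exists $\gamma \in L$ such that $\gamma I = J$ and $\barsgn{\gamma\cjg{\gamma}} = (\ve_1\delta_1, \dots, \ve_r\delta_r)$. The entire statement therefore reduces to showing that $\barsgn{\gamma\cjg{\gamma}} = (+1, \dots, +1)$, i.e., that $\norm{L/K}{\gamma} = \gamma\cjg{\gamma}$ is totally positive for every $\gamma \in L$.

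First I would verify this norm-positivity claim, which is where the hypothesis $D_\Omega \prec 0$ enters. The key point is that $L = K\big(\sqrt{D_\Omega}\big)$, so any $\gamma \in L$ can be written as $\gamma = \frac{x}{2} + \frac{y}{2}\sqrt{D_\Omega}$ with $x, y \in K$; then $\gamma\cjg{\gamma} = \frac{x^2}{4} - \frac{y^2}{4}D_\Omega$. Applying a real embedding $\sigma_i$ gives $\sigma_i(\gamma\cjg{\gamma}) = \frac{\sigma_i(x)^2}{4} - \frac{\sigma_i(y)^2}{4}\sigma_i(D_\Omega)$. Since $\sigma_i(D_\Omega) < 0$ by the assumption $D_\Omega \prec 0$, the second term is nonnegative, and hence the whole expression is a sum of two nonnegative real numbers; it is strictly positive unless both $x$ and $y$ vanish under $\sigma_i$, which cannot happen for $\gamma \neq 0$. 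Thus $\sigma_i(\gamma\cjg{\gamma}) > 0$ for every $i$, so $\barsgn{\gamma\cjg{\gamma}} = (+1, \dots, +1)$.

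Combining this with the displayed equation $\barsgn{\gamma\cjg{\gamma}} = (\ve_1\delta_1, \dots, \ve_r\delta_r)$ yields $\ve_i\delta_i = +1$ for each $i$, and since $\ve_i, \delta_i \in \{\pm 1\}$ this forces $\ve_i = \delta_i$ for all $i = 1, \dots, r$, as desired.

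I do not expect any serious obstacle here; the argument is short and the only subtle point is making sure $\gamma$ is written correctly in terms of $\sqrt{D_\Omega}$ so that the conjugation $\cjg{\,\cdot\,}$ (which sends $\sqrt{D_\Omega}$ to $-\sqrt{D_\Omega}$) produces the clean expression $\frac{x^2 - y^2 D_\Omega}{4}$. One should note that $x, y$ lie in $K$ rather than in $\OK{K}$ in general, but this is harmless since the embeddings $\sigma_i$ extend to all of $K$ and the sign computation is unaffected. The essential content is simply that a totally negative discriminant forces the relative norm form to be totally positive definite, so orientations can never be flipped within an equivalence class.
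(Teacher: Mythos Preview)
Your proposal is correct and follows essentially the same route as the paper's proof: invoke Lemma~\ref{Lemma:equivalent_ideals} to obtain $\gamma$ with $\barsgn{\gamma\cjg{\gamma}}=(\ve_1\delta_1,\dots,\ve_r\delta_r)$, then use $D_\Omega\prec0$ to see that $\gamma\cjg{\gamma}\succ0$. The only difference is that you spell out explicitly why the relative norm is totally positive (writing $\gamma=\frac{x}{2}+\frac{y}{2}\sqrt{D_\Omega}$ and computing $\gamma\cjg{\gamma}=\frac{x^2-y^2D_\Omega}{4}$), whereas the paper simply asserts this fact in one line.
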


\begin{proof}
By Lemma~\ref{Lemma:equivalent_ideals}, if $\OrelI{I}{\ve} \sim \OrelI{J}{\delta}$, then there exists $\gamma \in \f{L}$ such that $J=\gamma I$ and $\delta_i=\sgn(\sigma_i(\gamma\overline{\gamma}))\ve_i$ for all $i=1, \dots, r$. Since $D_\Omega\prec0$, there is $\gamma\overline{\gamma}\succ0$ for every $\gamma\in\f{L}$.  Therefore, $\sgn(\sigma_i(\gamma\overline{\gamma}))=+1$, and $\delta_i=\ve_i$ for all $i=1, \dots, r$.
\end{proof}

\begin{proposition}\label{QFsign}
Let $D_\Omega\prec0$, let $Q\in\QFSet$, and let $i\in\left\{1, \dots, r\right\}$. Then the following are equivalent:
\begin{enumerate}[(i)]
	\item $\sigma_i(Q)$ is positive definite, 
	\item $Q$ is the image under the map $\Phi$ of $\OrelIbasis{\alpha, \beta}{M}$ such that $\sigma_i(\det M)>0$, 
	\item $Q$ is the image under the map $\Phi$ of $\OrelIbasis{\alpha, \beta}{M}$ such that $\sigma_i\left(\Im\left(\frac{\beta}{\alpha}\right)\right)>0$, where $\Im\left(c_1+c_2\sqrt{D_{\Omega}}\right)=c_2$ for any $c_1, c_2 \in \f{K}$.
\end{enumerate}
\end{proposition}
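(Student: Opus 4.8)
The plan is to reduce each of the three conditions, for a fixed representative $\OrelIbasis{\alpha,\beta}{M}$ of the class $\Phi^{-1}(Q)$ (which exists and is unique by Theorem \ref{Theorem:Bijection}), to the single sign condition $\sigma_i(\det M)>0$, i.e.\ to the statement that the $i$-th component of the orientation $\barsgn{\det M}$ equals $+1$. I would take as the representing form $Q_{\alpha,\beta}=\Phi\big(\OrelIbasis{\alpha,\beta}{M}\big)$, whose discriminant is exactly $(\Omega-\cjg\Omega)^2=D_\Omega$ by Proposition \ref{Prop:Phi_Im}; since $D_\Omega\prec 0$ we then have $\sigma_i(\Disc(Q))=\sigma_i(D_\Omega)<0$. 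The positivity fact used repeatedly is that $D_\Omega\prec 0$ forces $\norm{L/K}{\gamma}=\gamma\cjg\gamma\succ 0$ for every nonzero $\gamma\in L$ (exactly as in the lemma preceding this proposition), so in particular $\sigma_i(\alpha\cjg\alpha)>0$.

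For (i) $\Leftrightarrow$ (ii): the leading coefficient of $Q_{\alpha,\beta}$ is $a=\frac{\alpha\cjg\alpha}{\det M}$. A real binary quadratic form of negative discriminant is definite, and is positive definite precisely when its leading coefficient is positive; applying $\sigma_i$ and using $\sigma_i(\Disc(Q))<0$, condition (i) is equivalent to $\sigma_i(a)>0$. From $\sigma_i(a)=\frac{\sigma_i(\alpha\cjg\alpha)}{\sigma_i(\det M)}$ together with $\sigma_i(\alpha\cjg\alpha)>0$, I conclude $\sigma_i(a)>0\Leftrightarrow\sigma_i(\det M)>0$, which is (ii).

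For (ii) $\Leftrightarrow$ (iii): the only genuine computation is the identity for the ``imaginary part''. Using $\Omega-\cjg\Omega=\sqrt{D_\Omega}$, the formula $\det M=\frac{\cjg\alpha\beta-\alpha\cjg\beta}{\Omega-\cjg\Omega}$, and the fact that $\alpha\cjg\alpha\in K$ is fixed by conjugation, I obtain
$$\frac{\beta}{\alpha}=\frac{\beta\cjg\alpha}{\alpha\cjg\alpha}, \qquad \Im\!\left(\frac{\beta}{\alpha}\right)=\frac{\beta\cjg\alpha-\cjg\beta\alpha}{2\sqrt{D_\Omega}\,\alpha\cjg\alpha}=\frac{\det M}{2\,\alpha\cjg\alpha}.$$
Applying $\sigma_i$ and invoking $\sigma_i(\alpha\cjg\alpha)>0$ once more gives $\sigma_i\!\left(\Im(\beta/\alpha)\right)>0\Leftrightarrow\sigma_i(\det M)>0$, closing the chain of equivalences.

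The subtlety to watch, rather than a hard obstacle, is that conditions (ii) and (iii) are phrased existentially over representatives, whereas (i) is intrinsic to $Q$. This is harmless here: the two computations above show that for \emph{any} representative of $\Phi^{-1}(Q)$ both $\sigma_i(\det M)>0$ and $\sigma_i(\Im(\beta/\alpha))>0$ are equivalent to (i), so the choice of representative is immaterial; the lemma preceding the proposition, which guarantees that the orientation $\barsgn{\det M}$ is constant on each class when $D_\Omega\prec0$, explains this invariance conceptually. Apart from this bookkeeping, every step is a direct sign computation enabled by the total negativity of $D_\Omega$.
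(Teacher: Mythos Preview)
Your proof is correct and follows essentially the same approach as the paper: both arguments reduce (i) and (iii) to the sign condition $\sigma_i(\det M)>0$ via the observation that $\alpha\cjg\alpha\succ 0$ when $D_\Omega\prec 0$. Your computation for (ii)$\Leftrightarrow$(iii) is in fact a bit cleaner than the paper's: rather than writing $\alpha,\beta$ in coordinates $a_1+a_2\sqrt{D_\Omega}$, $b_1+b_2\sqrt{D_\Omega}$ and computing both $\det M$ and $\Im(\beta/\alpha)$ explicitly, you extract the identity $\Im(\beta/\alpha)=\dfrac{\det M}{2\,\alpha\cjg\alpha}$ directly from the conjugation relation, which makes the sign comparison immediate.
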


\begin{proof}
Let $Q(x, y)=ax^2+bxy+cy^2$. Since $D_\Omega\prec0$, the positive definiteness of $\sigma_i(Q)$ is given by the sign of $\sigma_i(a)$. First, we will prove that $(i) \Leftrightarrow (ii)$: Recall that 
$$\Psi(Q)=\left( \left[a, \frac{-b+\sqrt{\Disc(Q)}}{2} \right]; \barsgn{a} \right).$$
If $\mathfrak{I}=\OrelIbasis{\alpha,\beta}{M}$ is an oriented ideal such that $\Phi(\mathfrak{I})=Q(x,y)$, then $\Psi\Phi(\mathfrak{I})=\Psi(Q)$. Hence, since $\mathfrak{I} \sim \Psi\Phi(\mathfrak{I})$, there is
$$\OrelIbasis{\alpha,\beta}{M} \sim  \left( \left[a, \frac{-b+\sqrt{\Disc(Q)}}{2} \right]; \barsgn{a} \right).$$
By the previous lemma, $\barsgn{\det M}=\barsgn{a}$. 
On the other hand, consider an oriented ideal $\mathfrak{I}=\OrelIbasis{\alpha,\beta}{M}$. Then the first coefficient of the quadratic form $\Phi(\mathfrak{I})$ is equal to $\frac{\alpha\overline{\alpha}}{\det M}$, and $\barsgn{\frac{\alpha\overline{\alpha}}{\det M}}=\barsgn{\det M}$, because $\alpha\overline{\alpha}$ is totally positive. Therefore, $\sigma_i(Q)$ is positive definite if and only if $\sigma_i(\det M)>0$.

Let us prove $(ii)\Leftrightarrow(iii)$. Assume that $Q$ is the image under the map $\Phi$ of an oriented ideal $\OrelIbasis{\alpha, \beta}{M}$. Recall that $\Omega=\frac{-w+\sqrt{D_{\Omega}}}{2}$ by (\ref{Omega}), and write $\alpha=a_1+a_2\sqrt{D_\Omega}$, $\beta=b_1+b_2\sqrt{D_\Omega}$ for some $a_1, a_2, b_1, b_2 \in \f{K}$. One can easily compute that
$$\det M = \frac{\cjg{\alpha}\beta-\alpha\cjg{\beta}}{\Omega-\cjg{\Omega}}=2(a_1b_2-a_2b_1),$$ 
and 
$$\frac{\beta}{\alpha}=\frac{a_1b_1-a_2b_2D_\Omega+(a_1b_2-a_2b_1)\sqrt{D_\Omega}}{a_1^2-a_2^2D_\Omega}.$$
Thus, 
$$\Im\left(\frac{\beta}{\alpha}\right)=\frac{a_1b_2-a_2b_1}{a_1^2-a_2^2D_\Omega},$$ 
where $a_1^2-a_2^2D_\Omega$ is totally positive. Hence, 
$$\barsgn{\Im\left(\frac{\beta}{\alpha}\right)}=\barsgn{a_1b_2-a_2b_1}=\barsgn{\det M}. \vspace{-3mm}$$
\end{proof}

The result about totally positive quadratic forms follows immediately from Proposition~\ref{QFsign}.
\begin{corollary}
Let $D_\Omega\prec0$. A quadratic form is totally positive definite if and only if it is the image under the map $\Phi$ of an oriented ideal $\OrelIbasis{\alpha, \beta}{M}$ such that $\Im\left(\frac{\beta}{\alpha}\right)$ is totally positive.
\end{corollary}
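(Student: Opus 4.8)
The plan is to read this corollary as the \uv{totally} (all-embeddings) version of Proposition \ref{QFsign}, which already resolves the question one real embedding at a time. By definition, a form $Q$ is totally positive definite exactly when $\sigma_i(Q)$ is positive definite for every $i=1,\dots,r$, and an element $\Im\left(\frac{\beta}{\alpha}\right)\in\f{K}$ is totally positive exactly when $\sigma_i\!\left(\Im\left(\frac{\beta}{\alpha}\right)\right)>0$ for every $i=1,\dots,r$. So the corollary should follow from Proposition \ref{QFsign} simply by conjoining its equivalence $(i)\Leftrightarrow(iii)$ across all indices $i$, and I would structure the argument as the two resulting implications.

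For the reverse implication I would suppose $Q=\Phi\left(\OrelIbasis{\alpha,\beta}{M}\right)$ with $\Im\left(\frac{\beta}{\alpha}\right)\succ0$, so that $\sigma_i\!\left(\Im\left(\frac{\beta}{\alpha}\right)\right)>0$ for each $i$; Proposition \ref{QFsign} $(iii)\Rightarrow(i)$ then makes each $\sigma_i(Q)$ positive definite, whence $Q$ is totally positive definite. For the forward implication I would start from a totally positive definite $Q$, use that $\Phi$ is a bijection (Theorem \ref{Theorem:Bijection}) to fix one representative $\OrelIbasis{\alpha,\beta}{M}$ with $\Phi\left(\OrelIbasis{\alpha,\beta}{M}\right)=Q$, and apply $(i)\Rightarrow(iii)$ at each $i$ to conclude $\sigma_i\!\left(\Im\left(\frac{\beta}{\alpha}\right)\right)>0$; collecting these over all $i$ yields $\Im\left(\frac{\beta}{\alpha}\right)\succ0$ for that single basis.

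The one point needing care — and the only place where the hypothesis $D_\Omega\prec0$ is genuinely used — is the alignment of quantifiers: condition $(iii)$ of Proposition \ref{QFsign} asserts existence of \emph{some} oriented ideal witnessing $\sigma_i\!\left(\Im\left(\frac{\beta}{\alpha}\right)\right)>0$, and a priori that witness could depend on $i$, whereas the corollary demands one basis $[\alpha,\beta]$ valid at all embeddings simultaneously. This is settled by the preceding lemma on constancy of orientation within a class when $D_\Omega\prec0$: combined with the identity $\barsgn{\Im\left(\frac{\beta}{\alpha}\right)}=\barsgn{\det M}$ established in the proof of Proposition \ref{QFsign}, it shows that the sign vector $\barsgn{\Im\left(\frac{\beta}{\alpha}\right)}$ is an invariant of the class and hence independent of the chosen basis. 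Thus every representative witnesses the per-embedding condition, a single representative suffices for all $i$ at once, and the statement follows immediately from Proposition \ref{QFsign} with no further obstacle.
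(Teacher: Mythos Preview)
Your proposal is correct and matches the paper's approach: the paper's own proof is literally the single sentence ``follows immediately from Proposition~\ref{QFsign}''. Your write-up simply unpacks that immediacy, and the care you take with the quantifier issue---that the witness basis might in principle depend on $i$---is exactly the right thing to spell out; the paper leaves this implicit, relying on the class-invariance of orientation under $D_\Omega\prec0$ (the preceding lemma) together with the identity $\barsgn{\Im(\beta/\alpha)}=\barsgn{\det M}$, just as you say.
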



\section{Totally imaginary fields}\label{sec:TotImag}

\textcolor[rgb]{1,0,0}{This section (in particular the proof of Proposition~\ref{Prop:NotSoWellDefined}) contains a serious mistake that cannot be corrected; see the appendix of \cite{KZcubes}.}

\bigskip

From the very beginning of this paper, we assumed that the field $K$ has at least one embedding into real numbers. One real embedding is all it takes to be able to distinguish between units $u$ and $-u$: only one of them can be totally positive. This enabled us to fix the square root of $\Disc(Q)$ \emph{uniquely} as a multiple of $\sqrt{D_\Omega}$ by a totally positive unit. In this section, we will extend the correspondence between (classes of) quadratic forms and a group to the case of totally imaginary field $K$.  

\subsection{Motivation}

Let us try to apply the construction from Sections~\ref{Section:Prel} and~\ref{sec:Correspondence} to the case of totally imaginary fields. Since there are no real embeddings, the notion of oriented ideals coincide with usual (fractional) ideals.

\begin{example}
Let $K=\Q(\ii)$ and $\Omega=\frac{1+\sqrt{17}}{2}$, i.e., $L=K(\sqrt{17})$. Consider two quadratic forms:
\[ Q(x,y)=2x^2+5xy+y^2 \qquad \text{ and } \qquad Q'(x,y)=-2x^2-5xy-y^2,\]
both of them of discriminant $17$. Obviously, $Q'(x,y)=-Q(x,y)$. Since $-1\in\UPlusSet{K}$, these two forms are equivalent. Now let ${I}=\Psi(Q)$ and ${I'}=\Psi(Q')$; the question is: Are these ideals equivalent in $\OCl{L/K}$? 

Assume that they are; then there exists $\gamma\in L\setminus\{0\}$ such that $I=(\gamma)\cdot I'$; in such case, $\norm{L/K}{I}=\norm{L/K}{(\gamma)}\cdot\norm{L/K}{I'}$. It is 
\[ {I}=\left[2, \frac{-5+\sqrt{17}}{2}\right] \qquad \text{ and } \qquad {I'}=\left[-2, \frac{5+\sqrt{17}}{2}\right],\]
and so we have $\norm{L/K}{{I}}=(2)=(-2)=\norm{L/K}{{I'}}$. It follows that $\gamma\in\USet{L}$, and hence $I=I'$. Then necessarily $\frac{5+\sqrt{17}}{2}\in I$, so we can find $x,y\in\Z[\ii]$ such that 
\[\frac{5+\sqrt{17}}{2}=2x+\frac{-5+\sqrt{17}}{2}y.\]
Comparing the imaginary parts on both sides, it follows that we can chose $x,y\in\Z$. But then comparing the coefficients by $\sqrt{17}$, we get that $y=1$. Finally, it must hold $4x=10$, which is impossible. Therefore, the ideals $I$ and $I'$ cannot be equivalent.
\end{example}

The example above suggests that the problem is that $Q$ and $-Q$ are equivalent whereas the obtained ideals are not. But the real problem is that the map $\Psi$ is not well defined: The choice of $u\in\UPlusSet{K}$ such that $\sqrt{\Disc(Q)}=u\sqrt{D_\Omega}$ is not unique, as now both $u$ and $-u$ are totally positive. 

We can fix that by choosing $u\in\UPlusSet{K}$ such that $\arg(u)\in[0,\pi)$ where $\arg(u)=\varphi$ such that $u=\abs{u}\cdot(\cos\varphi+\ii\cdot\sin\varphi)$. What happens then with the pair of forms $Q$ and $-Q$? Consider a~quadratic form $Q(x,y)=ax^2+bxy+cy^2$ of discriminant $D\in\DSet$, we have
\[ \Psi(Q)=\left[a,\frac{-b+\sqrt{D}}{2}\right] \qquad \text{and} \qquad \Psi(-Q)=\left[-a,\frac{b+\sqrt{D}}{2}\right]\]
where 
\[ \left[-a,\frac{b+\sqrt{D}}{2}\right]=\left[a,\frac{-b-\sqrt{D}}{2}\right]=\left[\cjg{a},\cjg{\frac{-b+\sqrt{D}}{2}}\right].\]
This means that we would like to make the ideals $[\alpha,\beta]$ and $[\cjg{\alpha},\cjg{\beta}]$ equivalent.

\subsection{Preliminaries}

We fix a totally imaginary number field $K$ and its quadratic extension $L$. We still need $K$ to be of class number one (note that every element of $K$ is totally positive, and hence $h(K)=h^+(K)$), so that every fractional $\OK{L}$-ideal has a two element $\OK{K}$-module basis. As before, for $\alpha\in L$ we denote by $\cjg{\alpha}$ the image of $\alpha$ under the nontrivial element of $\mathrm{Gal}(L/K)$ (thus $\cjg{\alpha}$ is \emph{not} the complex conjugate of $\alpha$). In particular, if $L=K(\sqrt{D_\Omega})$ and $a,b\in K$, then $\cjg{a+b\sqrt{D_\Omega}}=a-b\sqrt{D_\Omega}$.

We will (carefully) use many computations and results from the previous sections.

All the results from Subsection~\ref{Subsec:PrelIdeals} remain true. In particular, $\OK{L}=[1,\Omega]$ and $L=K(\sqrt{D_\Omega})$ with $\sqrt{D_\Omega}=\Omega-\cjg{\Omega}$, and if $L=K(\sqrt{D})$ with $D\in\OK{K}$ fundamental, then $D=u^2D_\Omega$ for some $u\in\USet{K}$.

From Subsection~\ref{subsec:QFs}, we adopt the definition of equivalence of quadratic forms together with the sets $\DSet$ and $\QFSet$. In particular, if $Q$ is a representative of a class in $\QFSet$, then $\Disc(Q)=u^2D_\Omega$ for some $u\in\UPlusSet{K}=\USet{K}$. 

\begin{example}[Cf. Remark~\ref{rem:equivQF}]
We want to show that in the definition of equivalent quadratic forms, units other than $1$ are important.

Let $K=\Q(\ii)$, and consider the quadratic forms $Q(x,y)=x^2+4xy+2y^2$ and $Q'(x,y)=\ii x^2+4xy-2\ii y^2$. Clearly $Q'(x,y)=-\ii Q(\ii x, y)$, but there is no matrix of determinant $1$ which would provide the equivalence between $Q$ and $Q'$. That can be seen as follows: by the first equality of (\ref{Eq:equiv_QF-koef}), we need to find $p,r\in\Z[\ii]$ such that $p^2+4pr+2r^2=\ii$. But that is not possible, because the imaginary part of $p^2+4pr+2r^2$ is divisible by 2.
\end{example}

We need to rectify the ambiguity of the square root of $\Disc(Q)$: Set $\sqrt{\Disc(Q)}=u\sqrt{D_\Omega}$ for $u\in\USet{K}$ such that $\arg(u)\in[0,\pi)$ (where $\arg(u)=\varphi$ such that $u=\abs{u}\cdot(\cos\varphi+\ii\cdot\sin\varphi)$). Note that this condition hold for exactly one from the pair $u$ and $-u$, and hence this choice is always possible and unique.

We will also need Lemma~\ref{Lemma:roots_of_equiv_QFs}; since it does not hold in the original formulation, we restate it here.

\begin{lemma} \label{Lemma:roots_of_equiv_QFs-IMAG}
Let $Q(x,y)=ax^2+bxy+cy^2$ be a quadratic form, and let $p, q, r, s \in\OK{K}$ be such that $ps-qr\in\UPlusSet{K}$. Consider the quadratic form $\widetilde{Q}(x,y)=Q(px+qy, rx+sy)=\widetilde{a}x^2+\widetilde{b}xy+\widetilde{c}y^2$ equivalent to $Q(x,y)$. Denote $D=\Disc(Q)$ and $\widetilde{D}=\Disc(\widetilde{Q})$. Then it holds 
\[
\frac{p\frac{-\widetilde{b}+\sqrt{\widetilde{D}}}{2\widetilde{a}}+q}{r\frac{-\widetilde{b}+\sqrt{\widetilde{D}}}{2\widetilde{a}}+s}
=
\left\{\begin{array}{ll}
\frac{-b+\sqrt{D}}{2a},& \text{ if } \arg(ps-qr)\in[0,\pi),\vspace{2mm}\\
\frac{-b-\sqrt{D}}{2a},& \text{ if } \arg(ps-qr)\in[\pi,2\pi).
\end{array}\right.
\]
\end{lemma}

\begin{proof}
The only difference from the proof of Lemma~\ref{Lemma:roots_of_equiv_QFs} is at its end, where we now have to distinguish between two possibilities: $\sqrt{\widetilde{D}}=(ps-qr)\sqrt{D}$ if $\arg(ps-qr)\in[0,\pi)$, and $\sqrt{\widetilde{D}}=-(ps-qr)\sqrt{D}$ if $\arg(ps-qr)\in[\pi,2\pi)$. 
\end{proof}

\subsection{Yet Another Class Group}\label{subsec:ClImag}

As we have seen in the motivation above, we have to change the group we work with in the desired correspondence.

Recall that $\ISet{L}$ denotes the group of all $\OK{L}$-ideals and $\PSet{L}$ ist subgroup of principal ideals (where all ideals are fractional). If $I$ is an $\OK{L}$-ideal, then $I\PSet{L}$ stands for its class in $\Cl{L}$. We set $\cjg{I}=\{\cjg{\alpha}~|~\alpha\in I\}$. Obviously, $\alpha\in I$ if and only if $\cjg{\alpha}\in\cjg{I}$, and hence $\cjg{I}\cdot\cjg{J}=\cjg{IJ}$. If $I=[\alpha,\beta]$, then $\cjg{I}=[\cjg{\alpha}, \cjg{\beta}]$. Moreover, $\cjg{(\gamma)}=\bigl(\cjg{\gamma}\bigr)$ for any $\gamma\in L$; thus, $\cjg{I\PSet{L}}=\cjg{I}\PSet{L}$.

We define a subgroup $\Gr{L/K}$ of $\Cl{L}$:
\[\Gr{L/K}=\left\{I\bigl(\cjg{I}\bigr)^{-1}\PSet{L}~|~I\in\ISet{L}\right\}.\]
This is indeed a group: If $I\bigl(\cjg{I}\bigr)^{-1}\PSet{L}, J\bigl(\cjg{J}\bigr)^{-1}\PSet{L}\in\Gr{L/K}$, then 
\[\left(I\bigl(\cjg{I}\bigr)^{-1}\PSet{L}\right)\cdot\left(J\bigl(\cjg{J}\bigr)^{-1}\PSet{L}\right)=(IJ)\bigl(\cjg{IJ}\bigr)^{-1}\PSet{L}\in\Gr{L/K}.\]
Thus, we can define a group which will play the role of the relative oriented class group.

\begin{definition}
Let $K$ be a totally imaginary field and $L$ its quadratic extension. We set
\[\ClImag{L/K}=\quotient{\Cl{L}}{\Gr{L/K}}\]
and call it the \emph{imaginary class group}. If two ideals $I$, $J$ represent the same class of $\ClImag{L/K}$, we write again $I\sim J$ and say that they are \emph{equivalent}.
\end{definition}

Note that in $\ClImag{L/K}$ holds $I\sim\cjg{I}$, so in particular $[\alpha,\beta]\sim[\cjg{\alpha},\cjg{\beta}]$. By the same computation as in the proof of Proposition~\ref{Prop:InverseIdeals}, we have that $[\alpha,\beta]^{-1}\sim [\cjg{\alpha},-\cjg{\beta}]=[\cjg{\alpha},\cjg{\beta}]$; therefore, $[\alpha,\beta]\sim [\alpha,\beta]^{-1}$, and so every nontrivial element of the imaginary class group has order two.

\subsection{The Correspondence}
We define maps $\Phi'$ and $\Psi'$ analogous to the maps $\Phi$ and $\Psi$ from Section~\ref{sec:Correspondence}. As before, we omit to write the classes to simplify the notation.
\[\begin{array}{cccc}
\Phi': & \ClImag{L/K} & \longrightarrow & \QFSet  \vspace{2mm}	\\
 & [\alpha,\beta] & \longmapsto & \frac{1}{\det M} \norm{L/K}{\alpha x-\beta y }=\frac{\alpha\cjg{\alpha}x^2-(\cjg{\alpha}\beta+\alpha\cjg{\beta})xy+\beta\cjg{\beta}y^2}{\det M}\vspace{6mm}\\
\Psi': & \QFSet  & \longrightarrow & \ClImag{L/K} \vspace{2mm}	\\
 & Q(x,y)=ax^2+bxy+cy^2 & \longmapsto & \left[a, \frac{-b+\sqrt{\Disc(Q)}}{2}\right]\\
\end{array}\]
where $\det M=\frac{\cjg\alpha\beta-\alpha\cjg\beta}{\Omega-\cjg\Omega}$ and we set $\sqrt{\Disc(Q)}=u\sqrt{D_\Omega}$ for some $u\in\USet{K}$ such that $\arg(u)\in[0,\pi)$. 

\begin{proposition} \label{Prop:NotSoWellDefined}
The maps $\Phi'$ and $\Psi'$ are well defined.
\end{proposition}

\begin{proof}
The proof that $\Phi'([\alpha,\beta])$ lies in $\QFSet$ is exactly the same as the proof of Proposition~\ref{Prop:Phi_Im}. To prove that $\Phi'$ does not depend on the choice of the representative, we proceed similar as in the proof of Proposition~\ref{Prop:Phi_repr}: First, let $[\alpha, \beta]=[p\alpha+q\beta, r\alpha+s\beta]$ for some $p,q,r,s\in\OK{K}$ such that $ps-qr\in\USet{K}$. Set $Q(x,y)= \Phi'([\alpha,\beta])$ and $\widetilde{Q}(x,y)=\Phi'([p\alpha+q\beta, r\alpha+s\beta])$; as before, we get 
\[\widetilde{Q}(x,y)=\frac{1}{ps-qr}Q(px-qy,-rx+sy).\]
But since now we have $\UPlusSet{K}=\USet{K}$, the equivalence of $Q(x,y)$ and $\widetilde{Q}(x,y)$ follows immediately. For $\gamma\in L\setminus\{0\}$, we get $\Phi'([\gamma\alpha,\gamma\beta])=\Phi'([\alpha,\beta])$ exactly as before. Finally, it is easy to check that $\Phi'([\alpha,\beta])=-\Phi'([\cjg\alpha,\cjg\beta])$ (where the minus sign comes from $\det M$), and since $-1\in\UPlusSet{K}$, there is nothing to prove.

\smallskip

To prove that $\Psi'$ is well-defined, first of all note that the value of $\sqrt{\Disc(Q)}$ is uniquely given (because the value of $\sqrt{D_\Omega}$ is fixed). The proof that $\Psi'(Q)\in\ClImag{L/K}$ is the same as the proof of Proposition~\ref{Prop:Psi_Im}; in fact, it is even easier, because now we are not interested in the orientation. So it remains to prove that $\Psi'(Q)$ does not depend on the choice of the representative in $\QFSet$. We will proceed similar as in the proof of Proposition~\ref{Prop:Psi_repr}.

Let $v\in\UPlusSet{K}$; to compute $\Psi'(vQ)$, we have to distinguish two cases: If $\arg(v)\in[0,\pi)$, then $\sqrt{\Disc(vQ)}=v\sqrt{\Disc(Q)}$ and the proof is the same as before. On the other hand, if $\arg(v)\in[\pi,2\pi)$, then $\sqrt{\Disc(vQ)}=-v\sqrt{\Disc(Q)}$; in such case 
\[\Psi'(Q)=\left[a, \frac{-b+\sqrt{\Disc{Q}}}{2}\right] \qquad \text{and} \qquad \Psi'(vQ)=\left[a, \frac{-b-\sqrt{\Disc{Q}}}{2}\right].\]
But the second ideal is the conjugate of the first one, so the two ideals are equivalent in $\ClImag{L/K}$.

Finally, we have to compare the images of the forms $Q(x,y)$ and $Q(px+qy,rx+sy)$ by $\Psi'$, where $p,q,r,s\in\OK{K}$ are such that $ps-qr\in\UPlusSet{K}$. Again, we proceed as in the proof of Proposition~\ref{Prop:Psi_repr}, only now we use Lemma~\ref{Lemma:roots_of_equiv_QFs-IMAG} instead of Lemma~\ref{Lemma:roots_of_equiv_QFs}; thus, we get two possibilities, to which ideal $\Psi'(Q(px+qy,rx+sy))$ is equivalent, depending on $\arg(ps-qr)$: either $\left[a,\frac{-b+\sqrt{\Disc(Q)}}{2}\right]$ or $\left[a,\frac{-b-\sqrt{\Disc(Q)}}{2}\right]$. But these two ideals are equivalent in $\ClImag{L/K}$, so we are done.
\end{proof}

Finally, we can state the desired version of Theorem~\ref{Theorem:Bijection} and Corollary~\ref{Cor:GroupStructure}. 

\begin{theorem}\label{Theorem:Bijection-Imag}
The maps $\Phi'$ and $\Psi'$ are mutually inverse bijections between $\QFSet$ and $\ClImag{L/K}$ and provide a group structure on $\QFSet$. The identity element of this group is represented by the form $x^2-(\Omega+\cjg\Omega)xy+\Omega\cjg\Omega y^2$ and each element is inverse to itself.
\end{theorem}

\begin{proof}
The only difference to the proves of Theorem~\ref{Theorem:Bijection} and Corollary~\ref{Cor:GroupStructure} is in the part about inverse elements; it has been explained at the end of Subsection~\ref{subsec:ClImag} that each element of the group $\ClImag{L/K}$ is of order one or two, and thus inverse to itself; the statement follows.  
\end{proof}

\section*{Acknowledgment}
 I wish to thank V\'{\i}t\v{e}zslav Kala for his excellent guidance and useful suggestions. \\
I also want to thank Professor Rainer Schulze-Pillot for pointing out references \cite{Kneser, Towber}. Finally, I would like to express my thanks to the unknown referee, who pointed out the problem with totally imaginary fields.


\end{document}